\newtheorem*{thm*}{Theorem}
\newtheorem{step}{Step}
\newtheorem{thm}{Theorem}[section]
\newtheorem{lem}[thm]{Lemma}
\theoremstyle{definition}
\newtheorem{df}[thm]{Definition}
\newtheorem{ex}[thm]{Example}
\theoremstyle{remark}
\newtheorem{rmk}[thm]{Remark}
\g@addto@macro\bfseries{\boldmath}
\numberwithin{equation}{section}
\DeclareMathOperator{\Aff}{Aff}   
\DeclareMathOperator{\Aut}{Aut}   
\DeclareMathOperator{\Coin}{Coin} 
\DeclareMathOperator{\Fix}{Fix}   
\DeclareMathOperator{\fix}{fix}   
\DeclareMathOperator{\GL}{GL}	  
\DeclareMathOperator{\id}{id}	  
\newcommand{\CC}{\mathbb{C}}      
\newcommand{\QQ}{\mathbb{Q}}      
\newcommand{\RR}{\mathbb{R}}      
\newcommand{\ZZ}{\mathbb{Z}}      
\newcommand{\n}{\mathfrak{n}}     
\newcommand{\NR}{\mathcal{NR}}    
\newcommand{\orb}{\backslash}     
\title{Nielsen numbers of $n$-valued maps on infra-solvmanifolds}
\author{Karel Dekimpe and Lore De Weerdt}
\thanks{Research supported by Methusalem grant METH/21/03 -- long term structural funding of the Flemish Government.}
\address{KU Leuven Campus Kulak Kortrijk, 8500 Kortrijk, Belgium}
\email{Karel.Dekimpe@kuleuven.be}
\email{Lore.DeWeerdt@kuleuven.be}
\begin{document}

\begin{abstract}
We derive a formula for the Nielsen number $N(f)$ for every $n$-valued self-map $f$ of an infra-solvmanifold. To do this, we express $N(f)$ in terms of Nielsen coincidence numbers of single-valued maps on solvmanifolds, and derive a formula for Nielsen coincidence numbers in that setting.
\end{abstract}

\maketitle

\section{Introduction}
For a self-map $f:M\to M$ of a closed manifold $M$, the Nielsen number $N(f)$ is a sharp lower bound for the number of fixed points among all maps homotopic to $f$. This number contains a lot of information, but is very hard to compute from its definition. For specific classes of manifolds, however, explicit formulas for the Nielsen number have been shown, which allow one to compute $N(f)$ purely in terms of the fundamental group morphism $f_\#:\pi_1(M)\to \pi_1(M)$ induced by $f$. In particular, if $M$ is a specific type of solvmanifold (called an $\NR$-solvmanifold), E. Keppelmann and C. McCord \cite{keppelmannmccord} showed the product formula
\begin{equation}\label{eq:KeppelmannMcCord}
N(f)=\prod_{i=0}^c \left|\det(I-F_i)\right|
\end{equation}
where $F_i\in \ZZ^{k_i\times k_i}$ are matrices induced by the morphism $f_\#$, and $I$ is the identity matrix. Using this result, K. Dekimpe and I. Van den Bussche were able to show a general formula for the Nielsen number of a self-map on any infra-solvmanifold \cite{iris}, by expressing $N(f)$ as the average of the Nielsen numbers of all lifts of $f$ to an $\NR$-solvmanifold. The goal of this paper is to generalise the latter formula to the setting of $n$-valued maps. In that setting, it is no longer possible to lift maps to self-maps of an $\NR$-solvmanifold (see e.g.\ \cite[Remark 5.1]{affien}). Instead, we can average over the Nielsen \emph{coincidence} numbers of maps between (a special type of) $\NR$-solvmanifolds. We generalise the formula \eqref{eq:KeppelmannMcCord} of Keppelmann and McCord to this setting (Theorem \ref{thm:N(f,q)}), and use this to obtain a formula for the Nielsen number of an $n$-valued map on any infra-solvmanifold (Theorem \ref{thm:main}).

\section{Nielsen theory}

\subsection{Nielsen coincidence theory}\label{subsec:Nielsen-coin}

For a pair of maps $f,g:M\to M'$ between closed manifolds of equal dimension, we can consider the set of coincidence points \[
\Coin(f,g)=\{x\in M \mid f(x)=g(x) \}.
\]
If $M$ and $M'$ are triangulable (in particular if they are smooth, which will be the case for all manifolds we consider) and orientable, the Nielsen coincidence number $N(f,g)$ is defined by Schirmer \cite{schirmer-coin} as follows. 

First, the coincidence set $\Coin(f,g)$ is partitioned into \emph{coincidence classes}, where $x,y\in \Coin(f,g)$ belong to the same coincidence class if there is a path $\omega$ from $x$ to $y$ such that the paths $f\omega$ and $g\omega$ are homotopic. Then, each coincidence class is assigned an \emph{index}, which is an integer that is zero if the coincidence class can become empty after a homotopy. The Nielsen number is the number of \emph{essential} coincidence classes, i.e.\ the ones with non-zero index. It is a lower bound for the number of coincidences among all pairs of maps homotopic to $f$ and $g$, i.e.\ \[
N(f,g)\leq \min\{\#\Coin(f',g') \mid f'\simeq f,g'\simeq g \},
\]
and the equality holds whenever the dimension of the manifolds $M$ and $M'$ is at least $3$ (see \cite{schirmer-coin}).

We will be interested in Nielsen numbers on manifolds given in the form $\pi\orb \tilde{M}$, where $\tilde{M}$ is a simply connected manifold and $\pi$ is a group acting properly discontinuously on $\tilde{M}$. In this setting, Nielsen numbers of maps $\pi\orb \tilde{M}\to \pi'\orb \tilde{M}'$ are often expressed in terms of their induced morphisms $\pi\to \pi'$, defined as follows.

Let $M=\pi\orb \tilde{M}$ and $M'=\pi'\orb \tilde{M}'$ be as above. For every map $f:M\to M'$ we can take a lift $\tilde{f}:\tilde{M}\to \tilde{M}'$, i.e.\ a map such that $f(\pi \tilde{x})=\pi'\tilde{f}(\tilde{x})$ for all $\tilde{x}\in \tilde{M}$. This lift $\tilde{f}$ induces a group morphism $\varphi:\pi\to \pi'$ defined by \[
\tilde{f}(\gamma\tilde{x})=\varphi(\gamma)\tilde{f}(\tilde{x}),\quad \forall \gamma\in \pi,\,\forall \tilde{x}\in \tilde{M}.
\]

\begin{rmk}
Any other lift of $f$ can be written as $\alpha\tilde{f}$ for some $\alpha\in \pi'$; the morphism corresponding to that lift is $\mu(\alpha)\varphi$, where $\mu(\alpha):\pi'\to \pi':\beta\mapsto \alpha\beta\alpha^{-1}$ is conjugation with $\alpha$. 
We will sometimes abuse terminology and speak of `the morphism induced by $f$', meaning the morphism induced by \emph{some lift of} $f$, where it does not matter which lift we take. One should keep in mind that this morphism is defined up to an inner automorphism of $\pi'$.
\end{rmk}

Recall that, given a basepoint $x\in M$, for every $\tilde{x}\in\tilde{M}$ with $\pi\tilde{x}=x$ we have an isomorphism \[
\Psi_{\tilde{x}}:\pi_1(M,x)\to \pi
\]
defined as follows. For a loop $\alpha:[0,1]\to M$ based at $x$, let $\tilde{\alpha}:[0,1]\to \tilde{M}$ be the unique lift of $\alpha$ starting at $\tilde{x}$. Since $\alpha(1)=x$, there is a $\gamma\in \pi$ such that $\tilde{\alpha}(1)=\gamma \tilde{x}$. The morphism $\Psi_{\tilde{x}}$ sends the homotopy class of $\alpha$ to this element $\gamma$. For $M'$, we have isomorphisms $\Psi_{\tilde{x}'}:\pi_1(M',x')\to \pi'$ defined in a similar fashion. With this notation, we have a commutative diagram \[
\begin{tikzcd}
\pi_1(M,x) \ar[r,"f_\#"] \ar[d,"\Psi_{\tilde{x}}"'] & \pi_1(M',f(x)) \ar[d,"\Psi_{\tilde{f}(\tilde{x})}"] \\
\pi \ar[r,"\varphi"] & \pi'.
\end{tikzcd}
\]

\begin{rmk}\label{rmk:phi-hom-inv}
It follows that the induced morphism $\varphi$ is a homotopy invariant. That is, for every map $f'$ homotopic to $f$, there is some lift of $f'$ for which the induced morphism is also $\varphi$.
\end{rmk}

\subsection{Nielsen fixed point theory of $n$-valued maps}

An $n$-valued map of a closed manifold $M$ is a continuous multifunction $f:M\multimap M$ such that $f(x)\subseteq M$ has cardinality exactly $n$ for each $x\in M$ (see \cite{browngoncalves} for more details). The fixed point set of an $n$-valued map is defined as \[
\Fix(f)=\{x\in M\mid x\in f(x) \}.
\] 
The Nielsen number $N(f)$ is a homotopy-invariant lower bound for $\# \Fix(f)$, defined in analogy with the single-valued case by Schirmer in \cite{schirmer2}.

An equivalent and more practical way to view $n$-valued maps is described in \cite{browngoncalves}: an $n$-valued multifunction $f:M\multimap M$ is continuous if and only if the corresponding single-valued function \[
M\to D_n(M):x\mapsto f(x)
\] 
is continuous, where \[
D_n(M)=\{\{x_1,\ldots,x_n\}\subseteq M\mid x_i\neq x_j\text{ if }i\neq j \}
\] 
is the \emph{unordered configuration space}, topologised as the quotient of the ordered configuration space \[
F_n(M)=\{(x_1,\ldots,x_n)\in M^n\mid x_i\neq x_j\text{ if }i\neq j \}
\]
under the action of the symmetric group $\Sigma_n$. Thus, any $n$-valued map $f:M\multimap M$ can be viewed as a continuous function $M\to D_n(M)$, which is denoted by $f$ as well.

A theory of lifts and covering group morphisms for $n$-valued maps is developed in \cite{charlotte1}.
If $p:\tilde{M}\to M$ is the universal cover of $M$, with covering group $\pi\cong \pi_1(M)$, a covering space for $D_n(M)$ is the \emph{orbit configuration space} \[
F_n(\tilde{M},\pi)=\{(\tilde{x}_1,\ldots,\tilde{x}_n)\in \tilde{M}^n \mid p(\tilde{x}_i)\neq p(\tilde{x}_j) \text{ if } i\neq j \},
\]
with covering map \[
p^n:F_n(\tilde{M},\pi)\to D_n(M):(\tilde{x}_1,\ldots,\tilde{x}_n)\mapsto \{p(\tilde{x}_1),\ldots,p(\tilde{x}_n) \}.
\]
Any $n$-valued map $f:M\to D_n(M)$ admits a lift $\tilde{f}=(\tilde{f}_1,\ldots,\tilde{f}_n):\tilde{M}\to F_n(\tilde{M},\pi)$ such that $f(p(\tilde{x}))=\{p(\tilde{f}_1(\tilde{x})),\ldots,p(\tilde{f}_n(\tilde{x})) \}$ for all $\tilde{x}\in\tilde{M}$, \[
\begin{tikzcd}
\tilde{M} \ar[r,"\tilde{f}"] \ar[d,"p"'] & F_n(\tilde{M},\pi) \ar[d,"p^n"] \\
M \ar[r,"f"] & D_n(M).
\end{tikzcd}
\]
This lift induces maps $\varphi_1,\ldots,\varphi_n:\pi\to \pi$ and $\sigma:\pi\to \Sigma_n:\alpha\mapsto \sigma_\alpha$ such that \[
(\tilde{f}_1\alpha,\ldots,\tilde{f}_n\alpha)=(\varphi_1(\alpha)\tilde{f}_{\sigma_\alpha^{-1}(1)},\ldots,\varphi_n(\alpha)\tilde{f}_{\sigma_\alpha^{-1}(n)})
\]
for all $\alpha\in \pi$. The map $\sigma:\pi\to \Sigma_n$ is a group morphism, and so are the re\-strictions $\varphi_j:S_j\to \pi$ to the stabiliser subgroups $S_j=\{\alpha\in \pi\mid \sigma_\alpha(j)=j\}$.

In \cite{RT}, we showed that Nielsen numbers of $n$-valued maps can be expressed in terms of Nielsen coincidence numbers of single-valued maps; the following is a special case of \cite[Theorem 6.1]{RT}:

\begin{thm}\label{thm:RT}
Let $f:M\to D_n(M)$ be an $n$-valued map, and let $\varphi_j:S_j\to \pi$ be the morphisms induced by some lift $(\tilde{f}_1,\ldots,\tilde{f}_n):\tilde{M}\to F_n(\tilde{M},\pi)$ of $f$. Suppose $\bar{M}=\Gamma\orb \tilde{M}$ is an orientable finite regular covering space of $M$, such that $\pi$ has an $(f,\Gamma)$-invariant subgroup $S$ (a finite index normal subgroup of $\pi$, contained in $\Gamma$ and in all groups $S_j$, with $\varphi_j(S)\subseteq \Gamma$ for all $j$). Write $\hat{M}=S\orb\tilde{M}$, and let $\hat{p}:\hat{M}\to M$, $\bar{p}:\bar{M}\to M$ and $q:\hat{M}\to \bar{M}$ be the covering maps.

There exist maps $\bar{f}_1,\ldots,\bar{f}_n:\hat{M}\to \bar{M}$ such that \[
f(\hat{p}(\hat{x}))=\{\bar{p}(\bar{f}_1(\hat{x})),\ldots,\bar{p}(\bar{f}_n(\hat{x})) \}
\] 
for all $\hat{x}\in\hat{M}$, and such that $\tilde{f}_j:\tilde{M}\to \tilde{M}$ is a lift of $\bar{f}_j$ for every $j$. Moreover, \[
N(f)\geq \frac{1}{[\pi:S]}\sum_{j=1}^{n}\sum_{\bar{\alpha}\in \pi/\Gamma} N(\bar{\alpha}\bar{f}_j,q)
\]
where $\bar{\alpha}\in \pi/\Gamma$ acts on $\bar{M}$ as a covering translation. The inequality is an equality if $\fix(\mu(\alpha)\varphi_j)=1$ for all $\alpha$ and $j$ for which $N(\bar{\alpha}\bar{f}_j,q)\neq 0$.
\end{thm}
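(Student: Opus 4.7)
The plan is to derive the statement as a specialisation of the more general \cite[Theorem 6.1]{RT}; the delicate index-theoretic bookkeeping is already handled there, so what I would focus on is constructing the maps $\bar{f}_j$, checking the geometric set-up, and identifying fixed points of $f$ with coincidences on $\hat{M}$ before invoking that result.

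The construction of $\bar{f}_j$ uses the $(f,\Gamma)$-invariance of $S$. For $\alpha\in S$, the inclusion $S\subseteq S_j$ gives $\sigma_\alpha(j)=j$, so the defining relation of the $\varphi_i$ collapses in the $j$-th coordinate to $\tilde{f}_j\alpha=\varphi_j(\alpha)\tilde{f}_j$. Because $\varphi_j(S)\subseteq \Gamma$, this is precisely the equivariance needed for $\tilde{f}_j$ to descend to a continuous map $\bar{f}_j:\hat{M}\to\bar{M}$ of which it is a lift. Combining the defining diagram $f\circ p=p^n\circ (\tilde{f}_1,\ldots,\tilde{f}_n)$ with the identity $\hat{p}=\bar{p}\circ q$ yields the claimed set equality $f(\hat{p}(\hat{x}))=\{\bar{p}(\bar{f}_j(\hat{x}))\}_j$.

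Next I would set up the correspondence between coincidences and fixed points. If $\bar{\alpha}\bar{f}_j(\hat{x})=q(\hat{x})$ in $\bar{M}$, applying $\bar{p}$ and using that $\bar{\alpha}$ is a covering translation gives $\hat{p}(\hat{x})=\bar{p}(\bar{f}_j(\hat{x}))\in f(\hat{p}(\hat{x}))$, so $\hat{p}(\hat{x})\in\Fix(f)$. Conversely, each fixed point of $f$ arises from such a coincidence for suitable $j$ and $\bar{\alpha}\in\pi/\Gamma$: lifting the fixed point to $\tilde{M}$ and unwinding the definition of the $\varphi_j$ produces the required $\alpha$, and varying $\bar{\alpha}$ exhausts all branches accessible under the deck group $\pi/\Gamma$ of $\bar{M}\to M$. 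The factor $[\pi:S]^{-1}$ then appears as the degree of the covering $\hat{p}$, compensating for counting each fixed point of $f$ once for each of its $[\pi:S]$ preimages in $\hat{M}$.

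The main obstacle is the Nielsen-theoretic refinement of this correspondence: showing that essential coincidence classes of $\bar{\alpha}\bar{f}_j$ with $q$ project, via $\hat{p}$, to essential fixed point classes of $f$ and give the stated lower bound, and that the hypothesis $\fix(\mu(\alpha)\varphi_j)=1$ for the $\alpha,j$ with $N(\bar{\alpha}\bar{f}_j,q)\neq 0$ prevents distinct essential coincidence classes on $\hat{M}$ from collapsing together under $\hat{p}$, which is what turns the inequality into an equality. Rather than redo this index bookkeeping, I would invoke \cite[Theorem 6.1]{RT} directly, as the present statement is precisely its specialisation to self-maps (that is, $M'=M$) together with a single chosen intermediate regular cover $\bar{M}$.
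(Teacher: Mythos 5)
Your proposal matches the paper's treatment: the paper gives no independent proof of this statement and simply presents it as a special case of \cite[Theorem 6.1]{RT}, which is exactly what you do after correctly verifying the construction of the maps $\bar{f}_j$ from the $(f,\Gamma)$-invariance of $S$ and the coincidence/fixed-point correspondence. Your supporting sketch of the descent of $\tilde{f}_j$ and of how coincidences of $(\bar{\alpha}\bar{f}_j,q)$ project to fixed points of $f$ is accurate.
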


In \cite{RT}, this theorem was used to compute the Nielsen numbers of $n$-valued maps on infra-nilmanifolds, using the known formulas for Nielsen coincidence numbers on nilmanifolds. In this paper, we will do the same for infra-solvmanifolds, by lifting to (a special type of) solvmanifolds. However, on solvmanifolds, a way to compute Nielsen coincidence numbers of single-valued maps is not known yet. The first part of this paper is concerned with computing these. We restrict to the setting needed to apply the theorem, i.e.\ to a pair $f,q:M\to M'$ where $M'$ is this special type of solvmanifold, $M$ is a finite cover of $M'$, and $q$ is the covering map.

\section{The structure of solvmanifolds}

Solvmanifolds are homogeneous spaces of connected solvable Lie groups. For us, solvmanifolds will always be compact. Such a manifold can be represented as the coset space $\Delta\orb G$ of a connected simply connected solvable Lie group $G$ by a closed cocompact subgroup $\Delta$. A solvmanifold of the type $\Gamma\orb G$ where $\Gamma$ is not just closed but discrete, and hence a lattice in $G$, is called a \emph{special} solvmanifold.

An easier type of solvmanifolds, and very useful to study solvmanifolds in general, are nilmanifolds, which are homogeneous spaces of connected \emph{nilpotent} Lie groups.

\subsection{Nilmanifolds}\label{subsec:nilmanifolds}

By the work of Mal'cev \cite{malcev}, a nilmanifold can always be represented as the coset space $\Gamma\orb N$ of a connected simply connected nilpotent Lie group $N$ by a lattice $\Gamma$ in $N$. The fundamental group of $\Gamma\orb N$ can be identified with $\Gamma$, and this group completely determines the nilmanifold (up to homeomorphism).

Mal'cev also showed that a group $\Gamma$ can be realised as the fundamental group of a nilmanifold if and only if it is finitely generated, torsion-free and nilpotent. For each such group $\Gamma$, there is a unique connected simply connected nilpotent Lie group $N$ such that $\Gamma$ is a lattice in $N$, called the \emph{Mal'cev completion} of $\Gamma$; then $\Gamma\orb N$ is the unique nilmanifold with fundamental group $\Gamma$.
If $N'$ is another connected simply connected nilpotent Lie group, then any group morphism $\Gamma\to N'$ extends uniquely to a Lie group morphism $N\to N'$ (see e.g.\ \cite[Theorem 2.11]{rhagunatan}).

Recall that a group $G$ is nilpotent if it admits a \emph{central series}, i.e.\ a series \[
G=G_1 \rhd G_2 \rhd \ldots G_c\rhd G_{c+1}=1
\] 
of subgroups with $[G_i,G]\subseteq G_{i+1}$ for all $i$. A special central series is the \emph{lower central series}, defined inductively by $\gamma_1(G)=G$ and $\gamma_{i+1}(G)=[\gamma_i(G),G]$ for $i\geq 1$. A group $G$ is nilpotent if and only if $\gamma_{c+1}(G)=1$ for some $c$. 

For a finitely generated torsion-free nilpotent group $\Gamma$, we consider another central series whose terms are the \emph{isolators} of the subgroups $\gamma_i(\Gamma)$:
\[
\Gamma_i\vcentcolon=\sqrt[\Gamma]{\gamma_i(\Gamma)}=\{g\in \Gamma \mid \exists \ell\in \ZZ_{>0}: g^\ell \in \gamma_i(\Gamma) \}.
\]
The groups $\Gamma_i$ satisfy $\Gamma_i=\Gamma\cap N_i$, where $N$ is the Mal'cev completion of $\Gamma$ and $N_i=\gamma_i(N)$. Moreover, the quotients $\Lambda_i=\Gamma_i/\Gamma_{i+1}$ are free abelian. (For a reference, see e.g.\ \cite{karel-boek}.)

Note that the groups $N_i$ are also connected simply connected nilpotent Lie groups, and the groups $\Gamma_i$ are also finitely generated torsion-free nilpotent groups.
In particular, for every $i\geq 1$, the group $N_i$ is the Mal'cev completion of $\Gamma_i$ and $\Gamma_i\orb N_i$ is a nilmanifold with fundamental group $\Gamma_i$. Moreover:

\begin{lem}\label{lem:nil-fib}
For every $i\geq 1$, the projection \[
p_i:\Gamma_i\orb N_i\to \Gamma_iN_{i+1}\orb N_i:\Gamma_in\mapsto \Gamma_iN_{i+1}n
\]
is a fiber bundle with fibers homeomorphic to $\Gamma_{i+1}\orb N_{i+1}$. The base space is a torus with fundamental group $\Lambda_i$.
\end{lem}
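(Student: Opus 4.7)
The plan is to view $p_i$ as the quotient map associated to a nested pair of closed subgroups of $N_i$, and then identify the fiber and the base separately.

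First I would verify that $\Gamma_iN_{i+1}$ is a closed subgroup of $N_i$. The group $N_{i+1}$ is closed and normal in $N_i$, and since $[N_i,N_i]\subseteq [N_i,N]=N_{i+1}$, the quotient $N_i/N_{i+1}$ is a connected simply connected abelian Lie group, hence isomorphic to $\RR^d$ for some $d$. The image of $\Gamma_i$ under the projection $N_i\to N_i/N_{i+1}$ is $\Gamma_i/(\Gamma_i\cap N_{i+1})=\Gamma_i/\Gamma_{i+1}=\Lambda_i$, and because $\Gamma_{i+1}$ is a lattice in $N_{i+1}$, the standard pushforward result for lattices under closed normal quotients of nilpotent Lie groups gives that $\Lambda_i$ is a lattice in $N_i/N_{i+1}\cong\RR^d$. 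In particular $\Lambda_i$ is discrete and closed, so its preimage $\Gamma_iN_{i+1}$ is closed in $N_i$. Factoring the projection through $N_i/N_{i+1}$ identifies the base with $\Lambda_i\orb(N_i/N_{i+1})\cong\Lambda_i\orb\RR^d$, a torus whose universal cover is $\RR^d$ and whose deck group, hence fundamental group, is $\Lambda_i$.

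For the fiber bundle structure I would invoke the classical result that, for closed subgroups $H\subseteq K$ of a Lie group $G$, the natural projection $H\orb G\to K\orb G$ is a locally trivial fiber bundle with fiber $H\orb K$. Applied to $G=N_i$, $H=\Gamma_i$, $K=\Gamma_iN_{i+1}$, this makes $p_i$ a fiber bundle with fiber $\Gamma_i\orb\Gamma_iN_{i+1}$. Finally, the continuous map $N_{i+1}\to\Gamma_i\orb\Gamma_iN_{i+1}$ sending $n\mapsto\Gamma_in$ is surjective (every coset $\Gamma_i\gamma n$ with $\gamma\in\Gamma_i$, $n\in N_{i+1}$ equals $\Gamma_in$), and two elements $n,n'\in N_{i+1}$ have the same image iff $n'n^{-1}\in\Gamma_i\cap N_{i+1}=\Gamma_{i+1}$; this induces a continuous bijection $\Gamma_{i+1}\orb N_{i+1}\to\Gamma_i\orb\Gamma_iN_{i+1}$, which is a homeomorphism because both spaces are compact manifolds of equal dimension.

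No step is especially difficult: the main content is assembling the general Lie-theoretic fibration theorem for nested closed subgroups with the standard behaviour of lattices under quotients by closed normal subgroups of nilpotent Lie groups.
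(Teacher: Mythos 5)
Your proof is correct, and it reaches the same identifications of fiber and base as the paper, but it secures local triviality by a different route. The paper observes that the two projections $N_i\to\Gamma_i\orb N_i$ and $N_i\to\Gamma_iN_{i+1}\orb N_i$ are surjective submersions, deduces that $p_i$ is one too, and then invokes Ehresmann's theorem (using compactness of $\Gamma_i\orb N_i$) to get a fiber bundle; note that for this the paper first has to know the base is a manifold, which it gets exactly as you do, by passing to $(\Gamma_iN_{i+1}/N_{i+1})\orb(N_i/N_{i+1})$. You instead appeal to the classical fibration theorem for nested closed subgroups $H\subseteq K\subseteq G$, which obliges you to check that $\Gamma_iN_{i+1}$ is closed in $N_i$ --- a point you handle via the pushforward of lattices to $N_i/N_{i+1}\cong\RR^d$. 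That extra verification is a genuine plus: the paper's treatment of the base is terser (it does not explicitly argue that $\Lambda_i$ acts properly discontinuously and cocompactly on $N_i/N_{i+1}$, whereas your lattice argument supplies exactly that), and your route avoids Ehresmann entirely. Your identification of the fiber by the continuous bijection $\Gamma_{i+1}\orb N_{i+1}\to\Gamma_i\orb\Gamma_iN_{i+1}$ is also sound; the cleanest justification is simply that a continuous bijection from a compact space to a Hausdorff space is a homeomorphism, which is slightly sharper than the equal-dimension remark you give. Both approaches are standard and of comparable length; the paper's has the advantage of reusing the same Ehresmann argument verbatim for the Mostow fibration of solvmanifolds later on, while yours front-loads the group-theoretic facts that the paper only makes explicit in that later, harder case.
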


\begin{proof}
For the fibers, we have homeomorphisms
\[
p_i^{-1}(\Gamma_iN_{i+1}n)=\Gamma_i\orb\Gamma_iN_{i+1}n\approx\Gamma_i\orb \Gamma_iN_{i+1}\approx(\Gamma_i\cap N_{i+1})\orb N_{i+1}=\Gamma_{i+1}\orb N_{i+1}.
\]
Note that we use $\Gamma_iN_{i+1}n$ both to denote an element of the space $\Gamma_iN_{i+1}\orb N_i$ and to denote the subset $\{xn\mid x\in \Gamma_iN_{i+1} \}$ of $N_i$. Thus, the fiber over the \emph{element} $\Gamma_iN_{i+1}n$ is the projection in $\Gamma_i\orb N_i$ of the \emph{set} $\Gamma_iN_{i+1}n$, i.e.\ the set $\{\Gamma_i xn \mid x\in \Gamma_iN_{i+1} \}$.

As an explicit homeomorphism to use later, for every representative $n\in N_i$ for $\Gamma_iN_{i+1}n$ we can take the map \[
h_n:\Gamma_i\orb\Gamma_iN_{i+1}n\to \Gamma_{i+1}\orb N_{i+1}
\]
sending an element of $\Gamma_i\orb\Gamma_iN_{i+1}n$, represented as $\Gamma_in_{i+1}n$ with $n_{i+1}\in N_{i+1}$, to $\Gamma_{i+1} n_{i+1}\in \Gamma_{i+1}\orb N_{i+1}$.

For the base space, we have a homeomorphism \[
\Gamma_iN_{i+1}\orb N_i\approx(\Gamma_iN_{i+1}/N_{i+1})\orb (N_i/N_{i+1}).
\]
It is a standard result (see e.g.\ \cite[Chapter 2, Theorem 3.4]{lieIII}) that the quotient of a simply connected solvable (so in particular, nilpotent) Lie group by a connected Lie subgroup is connected and simply connected. Therefore $N_i/N_{i+1}$ is a connected simply connected abelian Lie group. On the other hand, \[
\Gamma_iN_{i+1}/N_{i+1}\cong \Gamma_i/(\Gamma_i\cap N_{i+1})=\Gamma_i/\Gamma_{i+1}
\] 
is the free abelian group $\Lambda_i$. Hence the base space is a torus with fundamental group $\Lambda_i$. In particular, we can now use that it is a manifold.

Since the projections $N_i\to \Gamma_i\orb N_i$ and $N_i\to\Gamma_iN_{i+1}\orb N_i$ of Lie groups onto quotient manifolds are surjective submersions, so is the map $p_i$. Since $\Gamma_i\orb N_i$ is compact, it follows from Ehresmann's theorem that $p_i$ is a fiber bundle.
\end{proof}

Thus, nilmanifolds can be viewed as iterated fiber bundles, starting from a torus $\Gamma_c\orb N_c$ and where each time the base space is also a torus.

Now suppose $\Gamma$ and $\Gamma'$ are two finitely generated torsion-free nilpotent groups, with $\Gamma_i$, $\Gamma'_i$, $\Lambda_i$ and $\Lambda'_i$ as above. For every group morphism $\varphi:\Gamma\to \Gamma'$, one has $\varphi(\Gamma_i)\subseteq \Gamma'_i$ for all $i$. In particular, $\varphi$ induces well-defined morphisms $\varphi_i:\Lambda_i\to \Lambda'_i$. The collection of induced morphisms $\{\varphi_i:\Lambda_i\to \Lambda'_i\}_{i\geq 1}$ is called the \emph{linearisation} of the morphism $\varphi$.

\begin{lem}\label{lem:lin-f-nil}
Let $f:\Gamma\orb N\to\Gamma'\orb N'$ be a map between two nilmanifolds, with induced morphism $\varphi:\Gamma\to \Gamma'$ as in section \ref{subsec:Nielsen-coin}, and let $\{\varphi_i:\Lambda_i\to \Lambda'_i\}_{i\geq 1}$ be the linearisation of $\varphi$. Then $f$ is homotopic to a map that induces fiber maps $f_i:\Gamma_i\orb N_i\to \Gamma'_i\orb N'_i$ of all fibrations $p_i$ and $p'_i$. If $\bar{f}_i:\Gamma_iN_{i+1}\orb N_i\to \Gamma'_iN'_{i+1}\orb N'_i$ are the induced maps on the base spaces, we have commutative diagrams \[
\begin{tikzcd}
\pi_1(\Gamma_iN_{i+1}\orb N_i) \ar[r,"(\bar{f}_i)_\#"] \ar[d] & \pi_1(\Gamma'_iN'_{i+1}\orb N'_i) \ar[d] \\
\Lambda_i \ar[r,"\varphi_i"] & \Lambda'_i
\end{tikzcd}
\]
where the vertical arrows are isomorphisms. 

For all $n\in N_i$ and for all $n'\in N'_i$ with $\bar{f}_i(\Gamma_iN_{i+1}n)=\Gamma'_iN'_{i+1}n'$, if $(f_i)_{\Gamma_iN_{i+1}n}:\Gamma_i\orb\Gamma_iN_{i+1}n\to \Gamma'_i\orb\Gamma'_iN'_{i+1}n'$ is the induced map on the fibers, and $(f'_i)_{\Gamma_iN_{i+1}n}$ is the map corresponding to $(f_i)_{\Gamma_iN_{i+1}n}$ under the homeomorphisms from the proof of Lemma \ref{lem:nil-fib}, \[
\begin{tikzcd}[column sep=5em]
\Gamma_i\orb\Gamma_iN_{i+1}n \ar[r,"(f_i)_{\Gamma_iN_{i+1}n}"] \ar[d,"h_n"'] & \Gamma'_i\orb\Gamma'_iN'_{i+1}n' \ar[d,"h'_{n'}"] \\
\Gamma_{i+1}\orb N_{i+1} \ar[r,"(f'_i)_{\Gamma_iN_{i+1}n}"] & \Gamma'_{i+1}\orb N'_{i+1},
\end{tikzcd}
\]
the morphism induced by $(f'_i)_{\Gamma_iN_{i+1}n}$ is $\mu(\gamma')\varphi'$ for some $\gamma'\in \Gamma'_i$, where $\varphi':\Gamma_{i+1}\to \Gamma'_{i+1}$ is the morphism induced by $\varphi$.
\end{lem}

\begin{proof}
The morphism $\varphi$ extends uniquely to a morphism $F:N\to N'$, which in turn defines a map $\Gamma\orb N\to \Gamma'\orb N':\Gamma n\mapsto \Gamma'F(n)$. Since the morphism induced by this map (by the lift $F$) is also $\varphi$, and since nilmanifolds are aspherical, this map is homotopic to $f$.

The morphism $F$ induces morphisms $F_i:N_i\to N'_i$ for all $i\geq 1$, each with $F_i(\Gamma_i)\subseteq \Gamma'_i$, which in turn induce well-defined maps $f_i:\Gamma_i\orb N_i\to \Gamma'_i\orb N'_i$ and $\bar{f}_i:\Gamma_iN_{i+1}\orb N_i\to \Gamma'_iN'_{i+1}\orb N'_i$. Let $\bar{f}'_i$ be the map fitting into the diagram
\[
\begin{tikzcd}
\Gamma_iN_{i+1}\orb N_i \ar[r,"\bar{f}_i"] \ar[d] & \Gamma'_iN'_{i+1}\orb N'_i \ar[d] \\
\frac{\Gamma_iN_{i+1}}{N_{i+1}}\orb \frac{N_i}{N_{i+1}} \ar[r,"\bar{f}'_i"] & \frac{\Gamma'_iN'_{i+1}}{N'_{i+1}}\orb \frac{N'_i}{N'_{i+1}}
\end{tikzcd}
\] 
where the vertical arrows are the natural homeomorphisms.
By construction, a lift for $\bar{f}'_i$ is \[
\bar{F}_i:N_i/N_{i+1}\to N'_i/N'_{i+1}:n_iN_{i+1}\mapsto F_i(n_i)N'_{i+1}.
\] 
For all $\gamma_i\in \Gamma_i$ and $n_i\in N_i$, we have \begin{align*}
\bar{F}_i((\gamma_iN_{i+1})(n_iN_{i+1}))&=\bar{F}_i((\gamma_in_i)N_{i+1})\\&=F_i(\gamma_in_i)N'_{i+1}\\&=\varphi(\gamma_i)F_i(n_i)N'_{i+1}\\&=(\varphi(\gamma_i)N'_{i+1})(F_i(n_i)N'_{i+1})\\&=(\varphi(\gamma_i)N'_{i+1})\bar{F}_i(n_iN_{i+1})
\end{align*}
so the morphism $\Gamma_iN_{i+1}/N_{i+1}\to \Gamma'_iN'_{i+1}/N'_{i+1}$ induced by this lift is given by $\gamma_iN_{i+1}\mapsto \varphi(\gamma_i)N'_{i+1}$. Under the identifications $\Gamma_iN_{i+1}/N_{i+1}\cong \Gamma_i/\Gamma_{i+1}=\Lambda_i$ and $\Gamma'_iN'_{i+1}/N'_{i+1}\cong \Gamma'_i/\Gamma'_{i+1}=\Lambda'_i$, this is precisely the morphism $\varphi_i:\Lambda_i\to \Lambda'_i$ induced by $\varphi$.

Together, for each $\Gamma_iN_{i+1}n_i\in \Gamma_iN_{i+1}\orb N_i$, we get the diagram 
\[
\begin{tikzcd}
\pi_1(\Gamma_iN_{i+1}\orb N_i,\,\Gamma_iN_{i+1}n_i) \ar[r,"(\bar{f}_i)_\#"] \ar[d,"(h_i)_\#"'] & \pi_1(\Gamma'_iN'_{i+1}\orb N'_i,\,\Gamma'_iN'_{i+1}F_i(n_i)) \ar[d,"(h'_i)_\#"] \\
\pi_1(\frac{\Gamma_iN_{i+1}}{N_{i+1}}\orb \frac{N_i}{N_{i+1}},\frac{\Gamma_iN_{i+1}}{N_{i+1}}n_iN_{i+1}) \ar[r,"(\bar{f}'_i)_\#"] \ar[d,"\Psi_{n_iN_{i+1}}"'] & \pi_1(\frac{\Gamma'_iN'_{i+1}}{N'_{i+1}}\orb \frac{N'_i}{N'_{i+1}},\frac{\Gamma'_iN'_{i+1}}{N'_{i+1}}\bar{F}_i(n_iN_{i+1})) \ar[d,"\Psi_{\bar{F}_i(n_iN_{i+1})}"] \\
\frac{\Gamma_iN_{i+1}}{N_{i+1}} \ar[r,"{\gamma_iN_{i+1}\,\mapsto\,\varphi(\gamma_i)N'_{i+1}}"] \ar[d] & \frac{\Gamma'_iN'_{i+1}}{N'_{i+1}} \ar[d] \\
\Lambda_i \ar[r,"\varphi_i"] & \Lambda'_i
\end{tikzcd}
\]
where $\Psi_{n_iN_{i+1}}$ and $\Psi_{\bar{F}_i(n_iN_{i+1})}$ are the isomorphisms from section \ref{subsec:Nielsen-coin}.

For the second part, note that $\Gamma'_iF(n)=f_i(\Gamma_in)\in \Gamma'_iN'_{i+1}n'$, so there is a $\gamma'\in \Gamma'_i$ such that $\gamma'F(n)\in N'_{i+1}n'$. Then the map \[
F'_{i+1}:N_{i+1}\to N'_{i+1}:n_{i+1} \mapsto \gamma'F(n_{i+1}n)(n')^{-1}
\]
is well-defined and continuous, and one easily checks that this is a lift for $(f'_i)_{\Gamma_iN_{i+1}n}$. The induced morphism is $\mu(\gamma')\varphi'$, since for all $\gamma_{i+1}\in \Gamma_{i+1}$ and $n_{i+1}\in N_{i+1}$, \begin{align*}
F'_{i+1}(\gamma_{i+1}n_{i+1})&=\gamma'F(\gamma_{i+1}n_{i+1}n)(n')^{-1} \\ 
&=\gamma'\varphi(\gamma_{i+1})F(n_{i+1}n)(n')^{-1} \\
&=\gamma'\varphi(\gamma_{i+1})(\gamma')^{-1}F'_{i+1}(n_{i+1}). \qedhere
\end{align*}
\end{proof}

\subsection{Solvmanifolds}\label{subsec:solvmanifolds}

As in the case for nilmanifolds, a solvmanifold is completely determined by its fundamental group. The fundamental group of a solvmanifold $\Delta\orb G$ can be identified with the group $\Gamma=\Delta/\Delta_0$, where $\Delta_0$ is the connected component of the identity in $\Delta$. In particular, the fundamental group of a special solvmanifold $\Gamma\orb G$ is isomorphic to $\Gamma$.

A group $\Gamma$ can be realised as the fundamental group of a solvmanifold if and only if it fits into a short exact sequence \begin{equation}\label{eq:gamma-ses}
1\to \Gamma_1\to \Gamma\to \Lambda_0\to 1
\end{equation}
where $\Gamma_1$ is finitely generated torsion-free nilpotent and $\Lambda_0$ is free abelian (see \cite{wang}). Such a group $\Gamma$ is called a \emph{strongly torsion-free $S$-group}. 

\begin{rmk}\label{rmk:polycyclic}
If $\Gamma_i=\sqrt[\Gamma_1]{\gamma_i(\Gamma_1)}$ are the terms of the isolated lower central series for $\Gamma_1$, we get a series \[
\Gamma=\Gamma_0\rhd \Gamma_1\rhd \ldots \rhd \Gamma_{c+1}=1
\]
where all quotients $\Gamma_i/\Gamma_{i+1}$ are free abelian, so in particular, the group $\Gamma$ is polycyclic.
\end{rmk}

If $\Gamma$ is a strongly torsion-free $S$-group with $\Gamma_1$ as above, then $\Gamma_1$ contains the group \[
\sqrt[\Gamma]{[\Gamma,\Gamma]}=\{g\in \Gamma \mid \exists \ell\in \ZZ_{>0}: g^\ell \in [\Gamma,\Gamma] \} 
\]
since $\Gamma/\Gamma_1$ is abelian and torsion-free. On the other hand, $\sqrt[\Gamma]{[\Gamma,\Gamma]}$ is itself a finitely generated torsion-free nilpotent group (being a subgroup of $\Gamma_1$) for which the quotient $\Gamma/\sqrt[\Gamma]{[\Gamma,\Gamma]}$ is free abelian (it is finitely generated, and it is abelian and torsion-free by construction). Thus, we can take $\Gamma_1=\sqrt[\Gamma]{[\Gamma,\Gamma]}$, and this is the minimal nilpotent subgroup $\Gamma_1$ of $\Gamma$ leading to a short exact sequence of the form \eqref{eq:gamma-ses} with $\Lambda_0$ free abelian.

From now on, we will always take $\Gamma_1=\sqrt[\Gamma]{[\Gamma,\Gamma]}$. Let $\Lambda_0=\Gamma/\Gamma_1$ denote the free abelian quotient, and $\Gamma_i=\sqrt[\Gamma_1]{\gamma_i(\Gamma_1)}$ the terms of the isolated lower central series of $\Gamma_1$, with free abelian quotients $\Lambda_i=\Gamma_i/\Gamma_{i+1}$. Then $\Lambda_0$ acts on the groups $\Lambda_i$ by conjugation: for all $i\geq 1$ and $\gamma\Gamma_1\in \Lambda_0$, \[
\mu_i(\gamma\Gamma_1):\Lambda_i\to \Lambda_i: \gamma_i\Gamma_{i+1}\mapsto \gamma\gamma_i\gamma^{-1}\Gamma_{i+1}
\]
is a well-defined automorphism independent of the chosen representative $\gamma$. This gives a group morphism $\mu_i:\Lambda_0\to \Aut(\Lambda_i)$ for each $i$. The collection $\{\Lambda_i,\mu_i \}_{i\geq 1}$ is called the \emph{linearisation of $\Gamma$} (see \cite{keppelmannmccord}).

Since the groups $\Lambda_i$ are free abelian, say $\Lambda_i\cong \ZZ^{k_i}$, we can view the morphisms $\mu_i$ as morphisms $\ZZ^{k_0}\to \GL_{k_i}(\ZZ)$. Note that, for $v\in \ZZ^{k_0}$, the eigenvalues of $\mu_i(v)$ are independent the chosen identification $\Lambda_i\cong \ZZ^{k_i}$.

\begin{df}[see \cite{iris}]\label{def:net}
In the above setting, the group $\Gamma$ is called \emph{net} if for every $i\geq 1$ and $v\in \ZZ^{k_0}$, the multiplicative subgroup of $\CC^*$ generated by the eigenvalues of $\mu_i(v)$ does not contain a non-trivial root of unity.
\end{df}

\begin{rmk}
A more general notion is the one of $\NR$-group from \cite{keppelmannmccord}: they define $\Gamma$ to be $\NR$ (or actually, they call a solvmanifold with such fundamental group an $\NR$-solvmanifold) if no $\mu_i(v)$ has non-trivial roots of unity as eigenvalues. We will require the stronger property of netness in order for Lemma \ref{lem:iris} to hold, which fails for $\NR$-groups in general (see \cite[Example 2.3.10]{iris-phd}).
\end{rmk}

\subsection{The Mostow fibration}

Like for nilmanifolds, the structural results for the fundamental group of a solvmanifold give rise to a fibration on the topological level.

\begin{thm}[{\cite[Theorem 1.1]{mccord1991}}]\label{thm:Mostow}
Let $M$ be a solvmanifold with fundamental group $\Gamma$. Every short exact sequence \[
1\to \Gamma_1\to \Gamma\to \Lambda_0\to 1
\] 
with $\Gamma_1$ finitely generated torsion-free nilpotent and $\Lambda_0$ free abelian, can be realised as the homotopy group sequence of a fibration where the base space is a torus and the fiber is a nilmanifold. 

A map $f:M\to M'$ between two solvmanifolds with fundamental groups $\Gamma$ and $\Gamma'$ is homotopic to a fiber map of the respective fibrations corresponding to subgroups $\Gamma_1\subseteq \Gamma$ and $\Gamma'_1\subseteq \Gamma'$ if and only if $f_\#(\Gamma_1)\subseteq \Gamma'_1$.
\end{thm}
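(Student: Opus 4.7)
My approach splits the statement into two independent parts: (i) existence of a fibration realising the given short exact sequence of fundamental groups, and (ii) the homotopy criterion for being a fiber map.

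For (i), I would realise the solvmanifold concretely using a Lie group presentation. By Wang's theorem (and passing to a finite cover if $M$ is not special), we may assume $M=\Gamma\orb G$ for a connected simply connected solvable Lie group $G$ containing $\Gamma$ as a lattice. The inclusion $\Gamma_1\hookrightarrow G$ extends, via the Mal'cev completion, to a closed connected nilpotent Lie subgroup $H\subseteq G$. Normality of $H$ in $G$ follows from the fact that the conjugation action of $\Gamma$ on $\Gamma_1$ extends uniquely to the Mal'cev completion and hence to all of $G$ by density and continuity. One then checks $\Gamma\cap H=\Gamma_1$ and that $G/H$ is a simply connected abelian Lie group of dimension $k_0$, so that the projection $G\to G/H$ descends to a fiber bundle
\[
\Gamma_1\orb H\longrightarrow \Gamma\orb G\longrightarrow \Gamma H\orb G
\]
whose fiber is the nilmanifold with fundamental group $\Gamma_1$ and whose base is the torus $\Lambda_0\orb(G/H)$. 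The long exact sequence of homotopy groups recovers the given short exact sequence by construction.

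For (ii), the crucial input is that every solvmanifold is aspherical, so any map is determined up to homotopy by the induced morphism on $\pi_1$. Given $f:M\to M'$ with $f_\#(\Gamma_1)\subseteq\Gamma'_1$, the morphism $\varphi=f_\#$ descends to $\bar\varphi:\Lambda_0\to\Lambda'_0$ on the bases. Using the Lie group presentations from (i), I would extend $\varphi$ to a continuous map $F:G\to G'$ compatible with the subgroups $H\subseteq G$ and $H'\subseteq G'$: the restriction $\Gamma_1\to\Gamma'_1$ extends uniquely to a Lie morphism $H\to H'$ by Mal'cev's extension theorem, and one then chooses any continuous extension across a complementary $\RR^{k_0}$ realising $\bar\varphi$. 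The resulting $F$ descends to a fiber-preserving map $f':M\to M'$ with $f'_\#=\varphi$, so $f'\simeq f$ by asphericity. The converse is immediate: a fiber map restricts to a map between nilmanifold fibers, forcing $f_\#(\Gamma_1)\subseteq\Gamma'_1$.

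The main obstacle I expect is (i) when $M$ is not special, where one must handle $M=\Delta\orb G$ with $\Delta$ possibly non-discrete. The trick is to arrange that $\Delta_0\subseteq H$ when building $H$, so that $G/H$ is still simply connected abelian and the induced map on $\Delta\orb G$ remains a genuine fiber bundle with torus base. A secondary subtlety in (ii) is that $F$ need only be continuous, not a Lie group morphism; this flexibility is what guarantees that every algebraic $\varphi$ satisfying $\varphi(\Gamma_1)\subseteq\Gamma'_1$ can be realised geometrically by a fiber map.
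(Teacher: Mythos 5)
First, note that the paper does not prove this statement at all: it is quoted directly from McCord \cite{mccord1991}, and the paper's own construction (Theorem \ref{thm:solv-fib} and the Steps following it) is a more explicit version restricted to \emph{special} solvmanifolds $\Gamma\orb G$. Your sketch of part (i) in the special case follows essentially that route, but it contains a genuine error: you claim that $H$ (the Mal'cev completion of $\Gamma_1$ inside $G$) is \emph{normal} in $G$, arguing that the conjugation action of $\Gamma$ on $\Gamma_1$ extends to all of $G$ ``by density and continuity''. But $\Gamma$ is a lattice, hence discrete and certainly not dense in $G$, and $H$ need not be normal in $G$. The paper's Step \ref{step:GammaN} proves only the weaker (and correct) statement that $\Gamma$ normalises $N$, which suffices for $\Gamma N$ to be a group; it then explicitly warns that $N\orb G$ need not be a group, works throughout with left cosets, and identifies the base as the quotient of $N\orb G\approx\RR^k$ by a properly discontinuous action of $N\orb \Gamma N\cong\Lambda_0$, where the proper discontinuity is a nontrivial compactness argument (Step \ref{step:base-space}) that your sketch replaces by the unavailable group quotient $G/H$.

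Second, the reduction of (i) to the special case by ``passing to a finite cover'' does not prove the theorem as stated: a fibration of a finite cover of $M$ does not descend to a fibration of $M$, and the theorem is asserted for arbitrary solvmanifolds $\Delta\orb G$ with $\Delta$ closed, cocompact and possibly non-discrete. You flag this yourself, but the proposed fix (arranging $\Delta_0\subseteq H$) is precisely the hard content of the Mostow--McCord construction and is not supplied. For part (ii), your outline (realise $\varphi$ by an explicitly fiber-preserving map and invoke asphericity) is the standard and correct strategy, and the converse direction is fine; still, the existence of a continuous $\Gamma$-equivariant, fiber-compatible extension $F$ ``across a complementary $\RR^{k_0}$'' needs an actual argument (equivariant extension cell by cell, using asphericity of the target fiber), which is the part carried by McCord's Theorem 1.1 rather than by a one-line construction.
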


A fibration of this type is called a \emph{Mostow fibration}. We will consider the fibration corresponding to $\Gamma_1=\sqrt[\Gamma]{[\Gamma,\Gamma]}$, which is called the \emph{minimal Mostow fibration}.
Note that in this case, the condition of the second part of the theorem is always satisfied, so any map between two solvmanifolds is homotopic to a fiber map of their minimal Mostow fibrations.

To give a more explicit description, we restrict to the case where $M=\Gamma\orb G$ is a special solvmanifold, i.e.\ $\Gamma$ is a lattice in $G$. In analogy with the results for nilmanifolds, we will show:

\begin{thm}\label{thm:solv-fib}
For a special solvmanifold $\Gamma\orb G$, the Mal'cev completion of the group $\Gamma_1=\sqrt[\Gamma]{[\Gamma,\Gamma]}$ is a subgroup $N\subseteq G$ with $\Gamma_1=\Gamma\cap N$. Moreover, $\Gamma N$ is a subgroup of $G$, and the projection \[
p:\Gamma\orb G\to \Gamma N\orb G:\Gamma g\mapsto \Gamma Ng
\]
is a fiber bundle with fibers homeomorphic to $\Gamma_1\orb N$. The base space is a torus with fundamental group isomorphic to $\Lambda_0=\Gamma/\Gamma_1$.
\end{thm}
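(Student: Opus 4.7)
The plan is to mirror the proof of Lemma \ref{lem:nil-fib}, having first established that the Mal'cev completion $N$ of $\Gamma_1$ can be realised as a closed normal subgroup of $G$ with $\Gamma\cap N=\Gamma_1$.

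I would first embed $N$ into $G$ using the universal property of the Mal'cev completion: the inclusion $\Gamma_1\hookrightarrow G$ into the connected simply connected solvable Lie group $G$ extends uniquely to a Lie group morphism from the abstract Mal'cev completion $\tilde{N}$ of $\Gamma_1$ into $G$. This morphism is injective, as its kernel is discrete (it is injective on the lattice $\Gamma_1$) and $\tilde{N}$ is connected; I therefore identify $N$ with its image, a closed connected nilpotent Lie subgroup of $G$ in which $\Gamma_1$ sits as a cocompact lattice. The equality $\Gamma_1=\Gamma\cap N$ then follows quickly: the inclusion $\Gamma_1\subseteq\Gamma\cap N$ is obvious, and conversely $\Gamma\cap N$ is a discrete overgroup of the lattice $\Gamma_1$ in $N$, so $\Gamma_1$ has finite index in it; every $x\in\Gamma\cap N$ therefore satisfies $x^\ell\in\Gamma_1\subseteq[\Gamma,\Gamma]$ for some $\ell$, placing $x$ in $\sqrt[\Gamma]{[\Gamma,\Gamma]}=\Gamma_1$.

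For the normality of $N$ in $G$: since $\Gamma_1$ is characteristic in $\Gamma$, conjugation by any $\gamma\in\Gamma$ restricts to an automorphism of $\Gamma_1$, which extends uniquely to a Lie group automorphism of $\tilde{N}$; by uniqueness of the embedding into $G$, this forces $\gamma N\gamma^{-1}=N$, so $\Gamma$ normalises $N$, and I would extend this to full $G$-normality via a Zariski-density argument standard for lattices in simply connected solvable Lie groups. Granted normality, $\Gamma N$ is a closed subgroup of $G$, and the analysis then proceeds exactly as in Lemma \ref{lem:nil-fib}. For the fiber,
\[
p^{-1}(\Gamma Ng)=\Gamma\orb\Gamma Ng\approx\Gamma\orb\Gamma N\approx(\Gamma\cap N)\orb N=\Gamma_1\orb N.
\]
For the base, $\Gamma N\orb G\approx (\Gamma N/N)\orb(G/N)$, where $\Gamma N/N\cong\Lambda_0$ is a lattice in the connected simply connected solvable Lie group $G/N$; since any simply connected solvable Lie group admitting a nilpotent lattice is itself nilpotent, and the Mal'cev completion of an abelian group is abelian, $G/N\cong\RR^{k_0}$, so the base is a torus with fundamental group $\Lambda_0$. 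Finally, $p$ is a surjective submersion and $\Gamma\orb G$ is compact, so Ehresmann's theorem supplies the fiber bundle structure.

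The main obstacle will be establishing that $N$ is normal in the full group $G$, not only in $\Gamma$: the Mal'cev-extension argument handles conjugation by elements of $\Gamma$ directly, but extending this to arbitrary $g\in G$ requires invoking density properties of the lattice $\Gamma$ in $G$ (for instance via its algebraic hull), a standard but nontrivial input from solvable Lie theory. Once this is in place, every remaining step runs parallel with the nilmanifold case.
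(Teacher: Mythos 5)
Your proposal has the right overall architecture (embed $N$, identify the fibers, analyse the base, apply Ehresmann), but two of its load-bearing steps do not hold as stated. First, the universal property of the Mal'cev completion only extends morphisms of $\Gamma_1$ into connected simply connected \emph{nilpotent} Lie groups; you apply it to the inclusion $\Gamma_1\hookrightarrow G$ with $G$ merely solvable, where it can fail (e.g.\ the morphism $\ZZ\to\widetilde{E}(2)=\RR^2\rtimes\RR$ sending $1$ to $(v,2\pi)$ with $v\neq 0$ does not extend to $\RR$, since that element is not on a one-parameter subgroup). The paper repairs this by first proving $\Gamma_1\subseteq[G,G]$ --- using that $G/[G,G]$ is connected, simply connected and abelian, hence torsion-free --- and then invoking the universal property with the nilpotent target $[G,G]$.

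Second, and more seriously, you assert that $N$ is normal in all of $G$ via ``a Zariski-density argument standard for lattices in simply connected solvable Lie groups'', and your treatment of the base space genuinely uses this: you pass to the quotient group $G/N$ and argue it is $\RR^{k_0}$. No such density argument is available here (Borel density is a semisimple phenomenon, and a proper closed subgroup of $G$ can perfectly well contain a lattice), and the paper explicitly refrains from claiming $N\lhd G$, remarking that it does not even know whether $N\orb G$ is a group. Its Step 5 is engineered to avoid the issue: it uses only that $\Gamma$ normalises $N$ (your Mal'cev-extension argument for this part matches the paper's Step 3), works with the left-coset \emph{manifold} $N\orb G\approx\RR^k$, and verifies by a direct compactness argument that $N\orb\Gamma N\cong\Lambda_0$ acts properly discontinuously on it, so that the quotient is a compact manifold covered by $\RR^k$ with free abelian fundamental group, hence a torus. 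Relatedly, your auxiliary claim that a simply connected solvable Lie group admitting a nilpotent lattice is itself nilpotent is false: $\widetilde{E}(2)$ contains a lattice isomorphic to $\ZZ^3$. So the base-space step needs to be redone along the paper's lines rather than patched.
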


We proceed in several steps:

\begin{step}
The Mal'cev completion $N$ of $\Gamma_1$ is a subgroup of $G$.
\end{step}

\begin{proof}
We will show that $\Gamma_1\subseteq [G,G]$. Since $[G,G]$ is a connected simply connected nilpotent Lie group, it follows that $N$ is contained in $[G,G]$, hence in $G$.

For $g\in \Gamma_1$, there is an $\ell\in \ZZ_{>0}$ such that $g^\ell\in [\Gamma,\Gamma]$, so $g^\ell\in [G,G]$. Now, being the quotient of a simply connected solvable Lie group by a connected Lie subgroup, $G/[G,G]$ is a connected simply connected Lie group. Since it is also abelian, it is torsion-free, so $g^\ell\in [G,G]$ implies $g\in [G,G]$.
\end{proof}

\begin{step}
$\Gamma_1=\Gamma\cap N$.
\end{step}

\begin{proof}
Clearly, $\Gamma_1\subseteq \Gamma\cap N$. For the converse, note that $\Gamma\cap N$ is also a lattice in $N$: it is discrete since it is contained in $\Gamma$, and $(\Gamma\cap N)\orb N$ is compact since $\Gamma_1\orb N$ is compact and $\Gamma_1\subseteq \Gamma\cap N$. It follows that $\Gamma_1$ is a finite index subgroup of $\Gamma\cap N$. But then $(\Gamma\cap N)/\Gamma_1$ is a finite subgroup of the torsion-free group $\Gamma/\Gamma_1$, so it must be trivial. It follows that $\Gamma_1=\Gamma\cap N$.
\end{proof}

\begin{step}\label{step:GammaN}
$\Gamma$ is contained in the normaliser of $N$ in $G$. (In particular, $\Gamma N$ is a group and $N$ is normal in $\Gamma N$.)
\end{step}

\begin{proof}
Pick $\gamma\in \Gamma$. Since $\Gamma_1$ is normal in $\Gamma$, conjugation with $\gamma$ defines a group morphism \[
\varphi:\Gamma_1\to \Gamma_1:\gamma_1\mapsto \gamma\gamma_1\gamma^{-1}.
\]
This morphism extends uniquely to a Lie group morphism $\tilde{\varphi}:N\to N$ of the Mal'cev completion of $\Gamma_1$. Using the inclusion $\iota:N\to [G,G]$, we get a morphism $\iota\circ \tilde{\varphi}:N\to [G,G]$.

On the other hand, since $[G,G]$ is normal in $G$, conjugation with $\gamma$ defines a group morphism $\psi:N\to [G,G]:n\mapsto \gamma n\gamma^{-1}$. Since both are extensions of $\varphi$, they must be equal. In particular, the image of $\psi$ is contained in $N$.
\end{proof}

\begin{step}\label{step:fibers}
All fibers of $p$ are homeomorphic to the nilmanifold $\Gamma_1\orb N$.
\end{step}

\begin{proof}
For each $\Gamma Ng\in \Gamma N\orb G$, we have \[
p^{-1}(\Gamma Ng)=\Gamma\orb \Gamma Ng \approx \Gamma\orb \Gamma N \approx (\Gamma\cap N)\orb N=\Gamma_1\orb N.
\]
As an explicit homeomorphism to use later, for every representative $g\in G$ for $\Gamma Ng$ we can take the map \[
h_g:\Gamma\orb \Gamma Ng\to \Gamma_1\orb N
\]
sending an element of $\Gamma\orb \Gamma Ng$, represented as $\Gamma ng$ with $n\in N$, to $\Gamma_1 n\in \Gamma_1\orb N$.
\end{proof}

\begin{step}\label{step:base-space}
The base space is a torus with fundamental group isomorphic to $\Lambda_0$.
\end{step}

\begin{proof}
We have a homeomorphism \[
h:\Gamma N\orb G\to (N\orb \Gamma N)\orb (N\orb G):\Gamma Ng\mapsto (N\orb \Gamma N)Ng.
\]
Note that we need left cosets everywhere in order for $h$ to be well-defined, since we do not know whether $N\orb G$ is a group.

It follows from Step \ref{step:GammaN} that $N\orb \Gamma N$ is a group, and we have an isomorphism \[
N\orb \Gamma N\cong \Gamma N/N\cong \Gamma/(\Gamma\cap N)=\Gamma/\Gamma_1=\Lambda_0.
\]
On the other hand, $N\orb G$ is the quotient of a simply connected solvable Lie group by a connected Lie subgroup, so it is homeomorphic to $\RR^k$ for some $k$ (see \cite[Chapter 2, Theorem 3.4]{lieIII}).

We will show that the action of $N\orb \Gamma N$ on $N\orb G$ is properly discontinuous. It then follows that the quotient $(N\orb \Gamma N)\orb (N\orb G)$ is a manifold with fundamental group isomorphic to $N\orb \Gamma N\cong\Lambda_0$. Since it is a compact quotient manifold of $\RR^k$ with free abelian fundamental group, it has to be a torus.

To show the action of $N\orb \Gamma N$ on $N\orb G$ is properly discontinuous, suppose $\overline{K}\subseteq N\orb G$ is a compact set. We must show that $\{\bar{\gamma}\in N\orb \Gamma N \mid \bar{\gamma}\overline{K}\cap \overline{K}\neq \varnothing \}$ is finite.

Let $q:G\to N\orb G$ denote the projection. We can take a compact set $K\subseteq G$ such that $q(K)=\overline{K}$. Since $\Gamma_1\orb N$ is compact, we can also take a compact set $C\subseteq N$ with $1\in C$ whose image under the projection $N\to \Gamma_1\orb N$ is $\Gamma_1\orb N$. Then $C$ is a compact subset of $G$ with $\Gamma_1C=N$. Note that $CK$ is also compact, being the image of $C\times K$ under the (continuous) product map $G\times G\to G$.

Now suppose $\bar{\gamma}\in N\orb \Gamma N$ satisfies $\bar{\gamma}\overline{K}\cap \overline{K}\neq \varnothing$, and take $\gamma\in \Gamma$ such that $\bar{\gamma}=q(\gamma)$. Then we have $q(\gamma K)\cap q(K)\neq \varnothing$, which means \[
\varnothing\neq \gamma K\cap NK=\gamma K\cap \Gamma_1CK.
\]
Thus, there is a $\gamma_1\in \Gamma_1$ such that $\gamma K\cap \gamma_1CK\neq \varnothing$. Since $1\in C$, it follows that $\gamma CK\cap \gamma_1CK\neq \varnothing$, so $\gamma_1^{-1}\gamma CK\cap CK\neq \varnothing$. But since $CK$ is compact and the action of $\Gamma$ on $G$ is properly discontinuous, the set $F\vcentcolon=\{\gamma\in \Gamma \mid \gamma CK\cap CK\neq \varnothing \}$ is finite.
Thus, $\bar{\gamma}=q(\gamma_1^{-1}\gamma)$ belongs to the finite set $q(F)$.
\end{proof}

\begin{step}
$p$ is a fiber bundle.
\end{step}

\begin{proof}
Now that we know the base space is a manifold, we can apply the same reasoning as in Lemma \ref{lem:nil-fib}: the projections $G\to \Gamma\orb G$ and $G\to \Gamma N\orb G$ of Lie groups onto quotient manifolds are surjective submersions, hence so is $p$. Since $\Gamma\orb G$ is compact, it follows that $p$ is a fiber bundle.
\end{proof}

\subsection{Maps of special solvmanifolds}

Now suppose $\Gamma$ and $\Gamma'$ are two strongly torsion-free $S$-groups, with $\Gamma_1$, $\Gamma'_1$, $\Lambda_0$ and $\Lambda'_0$ as before. Then every group morphism $\varphi:\Gamma\to \Gamma'$ satisfies $\varphi(\Gamma_1)\subseteq \Gamma'_1$. In particular, $\varphi$ induces well-defined morphisms $\varphi_0:\Lambda_0\to\Lambda'_0$ and $\varphi':\Gamma_1\to \Gamma'_1$. The morphism $\varphi':\Gamma_1\to \Gamma'_1$ of finitely generated torsion-free nilpotent groups in turn has a linearisation $\{\varphi_i:\Lambda_i\to \Lambda'_i \}_{i\geq 1}$, as defined in section \ref{subsec:nilmanifolds}. We call the total collection $\{\varphi_i:\Lambda_i\to \Lambda'_i \}_{i\geq 0}$ the \emph{linearisation of $\varphi$}.

\begin{lem}\label{lem:phi-Ai}
Let $\Gamma$ and $\Gamma'$ be strongly torsion-free $S$-groups with respective linearisations $\{\Lambda_i,\mu_i \}_{i\geq 1}$ and $\{\Lambda'_i,\mu'_i \}_{i\geq 1}$ (see section \ref{subsec:solvmanifolds}), and $\varphi:\Gamma\to \Gamma'$ a morphism with linearisation $\{\varphi_i:\Lambda_i\to \Lambda'_i\}_{i\geq 0}$. Then \[
\varphi_i\mu_i(\lambda)=\mu'_i(\varphi_0(\lambda))\varphi_i
\]
for all $\lambda\in \Lambda_0$ and $i\geq 1$.
\end{lem}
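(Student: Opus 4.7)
The identity is essentially the statement that the linearisation of $\varphi$ intertwines the conjugation actions on the successive nilpotent quotients, which should follow mechanically from the fact that any homomorphism satisfies $\varphi(\gamma x \gamma^{-1})=\varphi(\gamma)\varphi(x)\varphi(\gamma)^{-1}$. My plan is to unfold both sides of the claimed equation on an arbitrary element of $\Lambda_i$ and compare.

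Concretely, I would pick a representative $\gamma\in \Gamma$ for $\lambda=\gamma\Gamma_1\in\Lambda_0$ and, for an arbitrary $x\in\Lambda_i$, a representative $\gamma_i\in\Gamma_i$ with $x=\gamma_i\Gamma_{i+1}$. By the definition of $\mu_i$ from subsection \ref{subsec:solvmanifolds}, the left-hand side evaluated at $x$ becomes
\[
\varphi_i\bigl(\mu_i(\lambda)(x)\bigr)=\varphi_i(\gamma\gamma_i\gamma^{-1}\Gamma_{i+1})=\varphi(\gamma\gamma_i\gamma^{-1})\Gamma'_{i+1}=\varphi(\gamma)\varphi(\gamma_i)\varphi(\gamma)^{-1}\Gamma'_{i+1},
\]
using only that $\varphi$ is a group morphism. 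On the other hand, $\varphi_0(\lambda)=\varphi(\gamma)\Gamma'_1$ and $\varphi_i(x)=\varphi(\gamma_i)\Gamma'_{i+1}$, so by definition of $\mu'_i$ the right-hand side is
\[
\mu'_i(\varphi_0(\lambda))\bigl(\varphi_i(x)\bigr)=\varphi(\gamma)\varphi(\gamma_i)\varphi(\gamma)^{-1}\Gamma'_{i+1},
\]
which matches. Since $x$ was arbitrary, this gives the claimed equality of morphisms $\Lambda_i\to\Lambda'_i$.

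The only bookkeeping to do along the way is checking that every expression is well-defined on cosets: $\mu_i(\lambda)$ and $\mu'_i(\varphi_0(\lambda))$ are independent of the chosen representatives in $\Gamma$ and $\Gamma'$ respectively (this is already recorded in the definition of the linearisation), and $\varphi_i$ is well-defined because $\varphi(\Gamma_j)\subseteq\Gamma'_j$ for all $j\geq 0$ (in particular for $j=i$ and $j=i+1$), so $\varphi$ descends to the quotients. I do not expect any real obstacle here; the lemma is a formal consequence of $\varphi$ being a group morphism compatible with the filtrations $\{\Gamma_j\}$ and $\{\Gamma'_j\}$, and the proof is a single-line calculation once the definitions are unwound.
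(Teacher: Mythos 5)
Your proof is correct and is essentially identical to the paper's: both choose representatives $\gamma$ and $\gamma_i$, unfold both sides on an arbitrary element of $\Lambda_i$ using the definitions of $\mu_i$, $\mu'_i$ and $\varphi_i$, and conclude from $\varphi(\gamma\gamma_i\gamma^{-1})=\varphi(\gamma)\varphi(\gamma_i)\varphi(\gamma)^{-1}$. The well-definedness remarks you add are accurate and consistent with what the paper takes for granted.
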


\begin{proof}
Write $\lambda=\gamma\Gamma_1$ with $\gamma\in \Gamma$. Then $\varphi_0(\lambda)=\varphi(\gamma)\Gamma'_1$. So, for any element $\gamma_i\Gamma_{i+1}\in \Lambda_i$, \begin{align*}
\varphi_i(\mu_i(\lambda)(\gamma_i\Gamma_{i+1})) &= \varphi_i(\gamma\gamma_i \gamma^{-1}\Gamma_{i+1}) \\
&= \varphi(\gamma\gamma_i\gamma^{-1})\Gamma'_{i+1} \\
&= \varphi(\gamma)\varphi(\gamma_i) \varphi(\gamma)^{-1}\Gamma'_{i+1} \\
&= \mu'_i(\varphi_0(\lambda))(\varphi(\gamma_i)\Gamma'_{i+1}) \\
&= \mu'_i(\varphi_0(\lambda))(\varphi_i(\gamma_i\Gamma_{i+1})). \qedhere
\end{align*}
\end{proof}

\begin{lem}\label{lem:lin-f-sol-base}
Given two special solvmanifolds $\Gamma\orb G$ and $\Gamma'\orb G'$, suppose $f:\Gamma\orb G\to \Gamma'\orb G'$ is a fiber map of the minimal Mostow fibrations, with induced morphism $\varphi:\Gamma\to \Gamma'$, and let $\varphi_0:\Lambda_0\to \Lambda'_0$ be the morphism induced by $\varphi$. If $\bar{f}:\Gamma N\orb G\to \Gamma'N'\orb G'$ is the map of the base spaces induced by $f$, we have a commutative diagram \[
\begin{tikzcd}
\pi_1(\Gamma N\orb G) \ar[r,"\bar{f}_\#"] \ar[d] & \pi_1(\Gamma'N'\orb G') \ar[d] \\
\Lambda_0 \ar[r,"\varphi_0"] & \Lambda'_0
\end{tikzcd}
\]
where the vertical arrows are isomorphisms.
\end{lem}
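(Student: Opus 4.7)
The plan is to parallel the proof of Lemma \ref{lem:lin-f-nil}, using the explicit Mostow fibration structure from Theorem \ref{thm:solv-fib}. I fix a lift $\tilde f:G\to G'$ of $f$ inducing $\varphi$, produce a lift $\tilde{\bar f}:N\orb G\to N'\orb G'$ of $\bar f$ to the universal covers of the base tori (by Step \ref{step:base-space}, these covers are $N\orb G\cong\RR^k$ and $N'\orb G'\cong\RR^{k'}$, with deck groups $N\orb\Gamma N\cong\Lambda_0$ and $N'\orb\Gamma'N'\cong\Lambda'_0$), and then read off $\bar f_\#$ from how $\tilde{\bar f}$ intertwines the deck group actions.

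The key technical step is to produce $\tilde{\bar f}$ compatibly with $\tilde f$, meaning $\tilde{\bar f}\circ r=r'\circ\tilde f$, where $r:G\to N\orb G$ and $r':G'\to N'\orb G'$ are the quotient maps. First I use simple connectivity of $N\orb G$ to lift the composite $\bar f\circ\pi$ (with $\pi:N\orb G\to\Gamma N\orb G$ the covering) through $\pi':N'\orb G'\to\Gamma'N'\orb G'$, obtaining some $\tilde{\bar f}$. Then I observe that both $\tilde{\bar f}\circ r$ and $r'\circ\tilde f$ are lifts of the common map $\bar f\circ\pi\circ r=p'\circ q'\circ\tilde f$ through $\pi'$, so they differ by a single deck transformation in $\Lambda'_0$ (since $G$ is connected), which I absorb into the choice of $\tilde{\bar f}$. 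I expect this compatibility step to be the main obstacle; it is not deep, but does require assembling the various covering-space squares carefully.

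With compatibility in place, computing $\bar f_\#$ reduces to a direct unwinding, analogous to the end of the proof of Lemma \ref{lem:lin-f-nil}. For $\gamma\in\Gamma$, the class $N\gamma\in N\orb\Gamma N$ acts on $N\orb G$ by $Ng\mapsto N\gamma g$, well-defined because $\Gamma$ normalises $N$ by Step \ref{step:GammaN}. Combining the relation $\tilde{\bar f}\circ r=r'\circ\tilde f$ with $\tilde f(\gamma g)=\varphi(\gamma)\tilde f(g)$ then forces $\tilde{\bar f}$ to intertwine $N\gamma$ with $N'\varphi(\gamma)$, so the morphism induced on deck groups is $N\gamma\mapsto N'\varphi(\gamma)$. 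Under the identifications $N\orb\Gamma N\cong\Gamma/\Gamma_1=\Lambda_0$ and $N'\orb\Gamma'N'\cong\Gamma'/\Gamma'_1=\Lambda'_0$ from Step \ref{step:base-space}, this is precisely $\varphi_0$, yielding the claimed commutative diagram.
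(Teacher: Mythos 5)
Your proposal is correct and follows essentially the same route as the paper: pass to the covering $N\orb G\to\Gamma N\orb G$ with deck group $N\orb\Gamma N\cong\Lambda_0$, take a lift of $\bar f$ compatible with $\tilde f$, and read off the induced morphism $N\gamma\mapsto N'\varphi(\gamma)$ from $\tilde f(\gamma g)=\varphi(\gamma)\tilde f(g)$, identifying it with $\varphi_0$. The only (harmless) difference is that the paper writes the lift explicitly as $Ng\mapsto N'\tilde f(g)$, whereas you obtain it from the abstract lifting criterion and then adjust by a deck transformation, which incidentally gives the well-definedness of that explicit formula for free.
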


\begin{proof}
Let $\tilde{f}:G\to G'$ be the lift for $f$ inducing the morphism $\varphi$. Note that we can write $\bar{f}(\Gamma Ng)=\Gamma'N'\tilde{f}(g)$ for all $g\in G$.

Let $\bar{f}'$ be the map corresponding to $\bar{f}$ under the homeomorphisms from Step \ref{step:base-space}, \[
\begin{tikzcd}
\Gamma N\orb G \ar[r,"\bar{f}"] \ar[d,"h"'] & \Gamma'N'\orb G' \ar[d,"h'"] \\
\frac{\Gamma N}{N} \orb (N\orb G) \ar[r,"\bar{f}'"] & \frac{\Gamma' N'}{N'} \orb (N'\orb G').
\end{tikzcd}
\]
Then a lift for $\bar{f}'$ is \[
\bar{\tilde{f}}:N\orb G\to N'\orb G':Ng\mapsto N'\tilde{f}(g).
\]
For all $\gamma\in \Gamma$ and $g\in G$, we have \begin{align*}
\bar{\tilde{f}}((N\gamma)(Ng))&=\bar{\tilde{f}}(N\gamma g) \\
&=N'\tilde{f}(\gamma g) \\
&=N'\varphi(\gamma)\tilde{f}(g) \\
&=(N'\varphi(\gamma))(N'\tilde{f}(g)) \\
&=(N'\varphi(\gamma))\bar{\tilde{f}}(Ng)
\end{align*}
so the morphism $N\orb \Gamma N\to N'\orb \Gamma'N'$ induced by this lift is given by $N\gamma\mapsto N'\varphi(\gamma)$. Under the identifications $N\orb \Gamma N\cong \Gamma/\Gamma_1=\Lambda_0$ and $N'\orb \Gamma' N'\cong \Gamma'/\Gamma'_1=\Lambda'_0$, this is precisely the morphism $\varphi_0:\Lambda_0\to \Lambda'_0$ induced by $\varphi$.

Together, for each $\Gamma Ng\in \Gamma N\orb G$, we get the diagram \[
\begin{tikzcd}
\pi_1(\Gamma N\orb G,\,\Gamma Ng) \ar[r,"\bar{f}_\#"] \ar[d,"h_\#"'] & \pi_1(\Gamma'N'\orb G',\,\Gamma'N'\tilde{f}(g)) \ar[d,"h'_\#"] \\
\pi_1(\frac{\Gamma N}{N}\orb (N\orb G),\frac{\Gamma N}{N}Ng) \ar[r,"\bar{f}'_\#"] \ar[d,"\Psi_{Ng}"'] & \pi_1(\frac{\Gamma'N'}{N'}\orb (N'\orb G'),\frac{\Gamma'N'}{N'}\bar{\tilde{f}}(Ng)) \ar[d,"\Psi_{\bar{\tilde{f}}(Ng)}"] \\
\frac{\Gamma N}{N} \ar[r,"{N\gamma\,\mapsto\,N'\varphi(\gamma)}"] \ar[d] & \frac{\Gamma'N'}{N'} \ar[d] \\
\Lambda_0 \ar[r,"\varphi_0"] & \Lambda'_0
\end{tikzcd}
\]
where $\Psi_{Ng}$ and $\Psi_{\bar{\tilde{f}}(Ng)}$ are the isomorphisms from section \ref{subsec:Nielsen-coin}.
\end{proof}

In Step 4 of the proof of Theorem~\ref{thm:solv-fib} we showed that for the minimal Mostow fibration $p: \Gamma \orb G \to \Gamma N \orb G$ of a special solvmanifold $\Gamma\orb G$ there is, for each $g\in G$,  a homeomorphism of each fiber to $\Gamma_1\orb N$. This is explicitly given as the map   
\[ h_g : \Gamma\orb \Gamma Ng\to \Gamma_1\orb N\]
sending an element of $\Gamma\orb \Gamma Ng$ that is represented as $\Gamma n g$ to $\Gamma_1 n \in \Gamma_1\orb N$. We now use this homeomorphism,  and the analogous homeomorphism $h'_{g'}$ for a second special solvmanifold $\Gamma' \orb G'$,  in the following lemma. 
\begin{lem}\label{lem:lin-f-sol-fiber}
Given two special solvmanifolds $\Gamma\orb G$ and $\Gamma'\orb G'$, suppose $f:\Gamma\orb G\to \Gamma'\orb G'$ is a fiber map of the minimal Mostow fibrations, with induced morphism $\varphi:\Gamma\to \Gamma'$. For $g\in G$ and for all $g'\in G'$ with $\bar{f}(\Gamma Ng)=\Gamma'N'g'$, if $f_{\Gamma Ng}:\Gamma \orb \Gamma Ng\to \Gamma'\orb \Gamma'N'g'$ is the map on the fibers induced by $f$, and $f'_{\Gamma Ng}$ is the map making the following diagram commutative (where $h_g$ and $h'_{g'}$ are as above), \[
\begin{tikzcd}
\Gamma \orb \Gamma Ng \ar[r,"f_{\Gamma Ng}"] \ar[d,"h_g"'] & \Gamma'\orb \Gamma'N'g' \ar[d,"h'_{g'}"] \\
\Gamma_1\orb N \ar[r,"f'_{\Gamma Ng}"] & \Gamma'_1\orb N',
\end{tikzcd}
\]
then the morphism induced by $f'_{\Gamma Ng}$ is $\mu(\gamma')\varphi'$ for some $\gamma'\in \Gamma'$, where $\varphi':\Gamma_1\to \Gamma'_1$ is the morphism induced by $\varphi$.
\end{lem}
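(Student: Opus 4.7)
The plan is to exhibit a concrete lift $F:N\to N'$ of $f'_{\Gamma Ng}$ and then verify directly, from the definition of induced morphism in section \ref{subsec:Nielsen-coin}, that $F$ realises $\varphi'$. The natural candidate, motivated by the form of $h_g$ and $h'_{g'}$ in Step \ref{step:fibers}, is
\[
F:N\to N':n\mapsto \tilde{f}(ng)\tilde{f}(g)^{-1}=\tilde{f}(ng)(g')^{-1}.
\]

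First I would check that $F$ actually lands in $N'$. Since $f$ is a fiber map of the minimal Mostow fibrations, $\tilde{f}(ng)\in \Gamma'N'g'$, so $F(n)\in \Gamma'N'$. The subgroup $N'$ is normal in $\Gamma'N'$ with discrete quotient $\Gamma'N'/N'\cong \Lambda'_0$ (Step \ref{step:GammaN} and Step \ref{step:base-space}), so the continuous map $n\mapsto F(n)N'$ from the connected space $N$ to the discrete group $\Gamma'N'/N'$ must be constant; since $F(1)=1$, the image of $F$ lies in $N'$.

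Second, I would verify that $F$ is a lift of $f'_{\Gamma Ng}$. By definition of the homeomorphisms from Step \ref{step:fibers},
\[
f'_{\Gamma Ng}(\Gamma_1 n)=h'_{g'}\bigl(f_{\Gamma Ng}(h_g^{-1}(\Gamma_1 n))\bigr)=h'_{g'}(\Gamma'\tilde{f}(ng))=h'_{g'}(\Gamma'F(n)g')=\Gamma'_1F(n),
\]
since $\tilde{f}(ng)=F(n)g'$ by construction. Hence $F$ descends to $f'_{\Gamma Ng}$.

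Finally, I would compute the morphism that $F$ induces. For $\gamma_1\in \Gamma_1$ and $n\in N$, the defining property $\tilde{f}(\gamma x)=\varphi(\gamma)\tilde{f}(x)$ of $\varphi$ gives
\[
F(\gamma_1 n)=\tilde{f}(\gamma_1 ng)(g')^{-1}=\varphi(\gamma_1)\tilde{f}(ng)(g')^{-1}=\varphi(\gamma_1)F(n)=\varphi'(\gamma_1)F(n),
\]
using that $\varphi'$ is by definition the restriction of $\varphi$ to $\Gamma_1$. Thus the morphism induced by the lift $F$ is precisely $\varphi'$. The only place where there is any content beyond unwinding definitions is the argument that $F(n)\in N'$, and that follows cleanly from the structural result on $\Gamma'N'$ together with connectedness of $N$; everything else is bookkeeping.
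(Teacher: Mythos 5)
Your proof is correct and follows essentially the same route as the paper: the same candidate lift $n\mapsto\tilde{f}(ng)(g')^{-1}$, the same verification that it descends to $f'_{\Gamma Ng}$, and the same computation showing the induced morphism is $\varphi'$. The only point of divergence is the justification of $F(N)\subseteq N'$: you invoke discreteness of $\Gamma'N'/N'$, which is true but deserves a sentence of justification --- it follows from the proper discontinuity of the action of $N'\orb \Gamma'N'$ on $N'\orb G'$ established in Step \ref{step:base-space}, since a properly discontinuous action on a locally compact Hausdorff space has discrete orbits --- whereas the paper sidesteps discreteness altogether by restricting to a path in $\Gamma'N'g'$ and using that $[0,1]$ cannot be written as a non-trivial countable disjoint union of closed sets.
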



\begin{proof}
Let $\tilde{f}:G\to G'$ be the lift inducing the morphism $\varphi$. Since $\Gamma'\tilde{f}(g)=f(\Gamma g)\in \Gamma'N'g'$, there is a $\gamma'\in \Gamma'$ such that $\gamma'\tilde{f}(g)\in N'g'$. We claim that $\gamma'\tilde{f}(Ng)\subseteq N'g'$. 
Assuming the claim is true, the map \[
\tilde{f}':N\to N':n\mapsto \gamma'\tilde{f}(ng)(g')^{-1}
\]
is well-defined and continuous, and one easily checks that this is a lift for $f'_{\Gamma Ng}$. 
We now show that the morphism induced by this lift is $\mu(\gamma')\varphi'$. Indeed, for all $\gamma_1\in \Gamma_1$ and $n\in N$, \begin{align*}
\tilde{f}'(\gamma_1n)&=\gamma'\tilde{f}(\gamma_1ng)(g')^{-1}\\ 
&=\gamma'\varphi(\gamma_1)\tilde{f}(ng)(g')^{-1} \\
&=\gamma'\varphi(\gamma_1)(\gamma')^{-1}\tilde{f}'(n).
\end{align*}
               
Finally, we prove the claim. To show that $\gamma'\tilde{f}(Ng)\subseteq N'g'$, note that $Ng$ is path connected, so $\gamma'\tilde{f}(Ng)$ is a path connected subset of $\gamma'\tilde{f}(\Gamma Ng)\subseteq \Gamma'N'g'$ containing $\gamma'\tilde{f}(g)\vcentcolon=g''$. We show that the path component of $\Gamma'N'g'$ containing $g''$ is contained in $N'g'$.

Suppose $\alpha:[0,1]\to \Gamma'N'g'$ is a path starting at $g''$. Consider the map $\bar{\alpha}:[0,1]\to N'\orb G'$ induced by $\alpha$. Then $\bar{\alpha}([0,1])\subseteq N'\orb \Gamma'N'g'$ and $\bar{\alpha}(0)=N'g''=N'g'$. Note that $\Gamma'$ is countable, and the preimages $\bar{\alpha}^{-1}(\{\gamma'' N'g'\})$, $\gamma''\in \Gamma'$, are disjoint closed sets whose union is $[0,1]$. Since $[0,1]$ cannot be written as a non-trivial countable disjoint union of closed sets, we must have $\bar{\alpha}([0,1])=\{N'g'\}$, so $\alpha([0,1])\subseteq N'g'$.
\end{proof}

\section{Nielsen coincidence numbers on special solvmanifolds}

By a result of Jezierski \cite{jezierski1989}, which we will adapt to our purposes, the Nielsen coincidence number of fiber maps can be expressed in terms of the Nielsen numbers of the induced maps on the base spaces and fibers. By combining this result with Lemma \ref{lem:lin-f-nil} and Lemmas \ref{lem:lin-f-sol-base} and \ref{lem:lin-f-sol-fiber}, we can express Nielsen numbers on special solvmanifolds entirely in terms of Nielsen numbers on tori, for which a general formula is known.

\subsection{Coincidences on tori}

The following is a standard result, shown e.g.\ in \cite[Lemma 7.3]{jezierski1989}:

\begin{thm}\label{thm:N-tori}
Let $T$ and $T'$ be tori of equal dimension $k$. For two maps $f,g:T\to T'$ with respective induced morphisms $f_\#,g_\#:\pi_1(T)\cong\ZZ^k\to\pi_1(T')\cong\ZZ^k$, \[
N(f,g)=\left|\det(g_\#-f_\#)\right|.
\]
\end{thm}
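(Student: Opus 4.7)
The plan is to reduce to the case of affine maps by homotopy invariance and then directly compute the coincidence set and its indices. Identify $T = \RR^k/\ZZ^k$ and $T' = \RR^k/\ZZ^k$ in the standard way, and view the induced morphisms $f_\#$, $g_\#$ as matrices in $\ZZ^{k\times k}$, hence also as linear endomorphisms of $\RR^k$. For any $a, b \in \RR^k$, the affine formulas $\tilde f(x) = f_\# x + a$ and $\tilde g(x) = g_\# x + b$ descend to maps $T\to T'$ whose induced morphisms are again $f_\#$ and $g_\#$. Since tori are aspherical $K(\ZZ^k,1)$ spaces, Remark \ref{rmk:phi-hom-inv} implies these affine maps are homotopic to $f$ and $g$; by homotopy invariance of $N$, we may therefore assume $f$ and $g$ themselves to be affine.

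Under this reduction, $\bar x = x + \ZZ^k$ lies in $\Coin(f,g)$ if and only if $(g_\# - f_\#)x \equiv a - b \pmod{\ZZ^k}$. Suppose first $\det(g_\# - f_\#)\neq 0$. Then $g_\# - f_\#$ is a linear automorphism of $\RR^k$, and the solutions modulo $\ZZ^k$ of the congruence are in bijection with the finite cokernel $\ZZ^k/(g_\# - f_\#)\ZZ^k$, of cardinality $|\det(g_\# - f_\#)|$. At each coincidence point the differential of $g-f$ is the nonsingular matrix $g_\# - f_\#$, so every coincidence is transverse with index equal to the common sign $\mathrm{sign}\det(g_\# - f_\#)\in\{\pm 1\}$; in particular all coincidence points are essential. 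The main delicate point is that a priori several of these points could fall in the same Nielsen class. This is handled by a counting argument: the Nielsen classes inject into the Reidemeister set of $(f_\#,g_\#)$, which for an abelian target is $\ZZ^k/(g_\# - f_\#)\ZZ^k$, again of cardinality $|\det(g_\# - f_\#)|$. Having exactly that many essential coincidence points and at most that many Nielsen classes forces each coincidence to form its own essential class, so $N(f,g) = |\det(g_\# - f_\#)|$.

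When $\det(g_\# - f_\#) = 0$, the image $(g_\# - f_\#)\RR^k$ is a proper rational subspace of $\RR^k$, so $(g_\# - f_\#)\RR^k + \ZZ^k$ is a countable union of translates of a proper subspace and thus has Lebesgue measure zero. A small perturbation of $b$ to some $b'$ with $a - b'$ outside this set replaces $g$ by a homotopic map whose coincidence set with $f$ is empty, yielding $N(f,g) = 0 = |\det(g_\# - f_\#)|$. The main obstacle throughout is the Nielsen-class count in the non-degenerate case; if one prefers to avoid quoting the injection into the Reidemeister set, an equivalent route is to compute $L(f,g) = \det(g_\# - f_\#)$ via a standard cup-product calculation in $H^\ast(T')$ and observe that, since all transverse indices share the same sign, no cancellations occur in $L(f,g)$, so $N(f,g) \geq |L(f,g)|$ already forces equality.
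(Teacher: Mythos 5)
The paper does not actually prove this statement: it is quoted as a standard result with a pointer to Jezierski's Lemma 7.3, so any complete argument you give is necessarily a different route. Your overall strategy (homotope to affine maps, count solutions of $(g_\#-f_\#)x\equiv a-b \bmod \ZZ^k$, compute transverse indices, handle the degenerate case by perturbing the translation part) is the right one, and the degenerate case $\det(g_\#-f_\#)=0$ is handled correctly.

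However, the step you yourself flag as delicate --- that the $\left|\det(g_\#-f_\#)\right|$ coincidence points lie in pairwise distinct Nielsen classes --- is not actually established by either argument you offer. Knowing that there are exactly $\left|\det(g_\#-f_\#)\right|$ coincidence points and \emph{at most} $\left|\det(g_\#-f_\#)\right|$ Nielsen classes does not force each point to be its own class: with $\left|\det(g_\#-f_\#)\right|=4$ these facts are consistent with two classes of two points each, which would give $N(f,g)=2$. The counting only yields the upper bound $N(f,g)\leq\left|\det(g_\#-f_\#)\right|$. The fallback via the Lefschetz number is also invalid: $N(f,g)\geq |L(f,g)|$ is not a general inequality (a single essential class of index $5$ has $N=1<5=|L|$), and "no cancellation'' only tells you $|L(f,g)|$ equals the total number of points, which again bounds $N$ from above, not below.

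The gap is easily closed, in two ways. Either argue directly with lifts: if $\bar x,\bar y$ are coincidence points and $\omega$ is a path from $\bar x$ to $\bar y$ lifting to a path from $x$ to $y+v$ with $v\in\ZZ^k$, then comparing the endpoints of the lifts of $f\omega$ and $g\omega$ based at the same point shows $f\omega\simeq g\omega$ rel endpoints iff $(g_\#-f_\#)(y+v-x)=0$, which for invertible $g_\#-f_\#$ forces $\bar x=\bar y$. Or complete your Reidemeister counting by adding the surjectivity half: for each $v\in\ZZ^k$ the lift pair $(v+\tilde f,\tilde g)$ has exactly one coincidence, so the map from coincidence points to the Reidemeister set $\ZZ^k/(g_\#-f_\#)\ZZ^k$ is a surjection between finite sets of equal cardinality, hence a bijection, and distinct points therefore have distinct Reidemeister coordinates and lie in distinct classes. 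With either repair the proof is complete and correct.
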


\begin{rmk}\label{rmk:det-indep}
Note that this formula is independent of the chosen identifications $\pi_1(T)\cong\ZZ^k$ and $\pi_1(T')\cong\ZZ^k$: say $F,G\in \ZZ^{k\times k}$ are the respective matrices of $f_\#$ and $g_\#$ for one choice of bases for $\pi_1(T)$ and $\pi_1(T')$, and $F',G'\in \ZZ^{k\times k}$ are those for another choice. The change of basis from the first to the second basis for $\pi_1(T)$ is given by a matrix $P\in \ZZ^{k\times k}$ with $\left|\det(P)\right|=1$; the one for $\pi_1(T')$ by another matrix $P'\in \ZZ^{k\times k}$ with $\left|\det(P')\right|=1$. Thus, \begin{align*}
\left|\det(G'-F')\right|&=|\!\det(P'GP^{-1}-P'FP^{-1})|\\&=\left|\det(P')\right|\left|\det(G-F)\right||\!\det(P^{-1})|\\&=\left|\det(G-F)\right|.
\end{align*}
\end{rmk}

\subsection{Coincidences of fiber maps}

Given a commutative diagram of fiber bundles and fiber maps \[
\begin{tikzcd}
E \ar[r,"{(f,g)}"] \ar[d,"p"'] & E' \ar[d,"p'"] \\
B \ar[r,"{(\bar{f},\bar{g})}"] & B'
\end{tikzcd}
\]
where $E,E',B,B'$ and all fibers are compact connected orientable manifolds with $\dim E=\dim E'$ and $\dim B=\dim B'$, we can consider the Nielsen numbers $N(f,g)$, $N(\bar{f},\bar{g})$, and $N(f_b,g_b)$ for all $b\in \Coin(\bar{f},\bar{g})$ (where $f_b$ and $g_b$ are the maps $E_b\vcentcolon=p^{-1}(b)\to {E'}_{\!b'}\vcentcolon=(p')^{-1}(b')$ induced by $f$ and $g$, with $b'=\bar{f}(b)=\bar{g}(b)$).

The main focus of \cite{jezierski1989} lies on proving the product formula 
\[
N(f,g)=N(\bar{f},\bar{g})N(f_b,g_b)
\]
for a pair of fiber maps between orientable fiber bundles (under some extra conditions; see \cite[Theorem 6.5]{jezierski1989}). Our goal is different: we want a formula for $N(f,g)$ in terms of $N(\bar{f},\bar{g})$ and $N(f_b,g_b)$ that can be applied to the Mostow fibrations of solvmanifolds, which are non-orientable fiber bundles (see \cite[Lemma 1.3]{mccord1991}), but which are simple in the sense that the base spaces are tori. We will prove:

\begin{thm}\label{thm:N-fiber-tori}
Given a commutative diagram of fiber bundles and fiber maps \[
\begin{tikzcd}
E \ar[r,"{(f,g)}"] \ar[d,"p"'] & E' \ar[d,"p'"] \\
B \ar[r,"{(\bar{f},\bar{g})}"] & B'
\end{tikzcd}
\]
where all spaces involved are compact connected orientable manifolds of respectively equal dimensions, and the base spaces $B$ and $B'$ are tori. Then \[
N(f,g)=\sum_b N(f_b,g_b)
\]
where the sum runs over representatives for the essential coincidence classes of $(\bar{f},\bar{g})$.
\end{thm}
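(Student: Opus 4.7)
The plan is to decompose $\Coin(f,g)$ according to the projection $p$, exploiting the torus base: when $\det(\bar g_\# - \bar f_\#) \neq 0$, all essential coincidence classes of $(\bar f,\bar g)$ have index $\eps = \mathrm{sign}(\det(\bar g_\# - \bar f_\#)) \in \{\pm 1\}$ by Theorem \ref{thm:N-tori} (and Remark \ref{rmk:det-indep}), and $\bar g_\# - \bar f_\#$ is injective on $\pi_1$.

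The trivial case $\det(\bar g_\# - \bar f_\#) = 0$ gives $N(\bar f,\bar g) = 0$ and an empty right-hand sum. Then $(\bar f,\bar g)$ is homotopic to a coincidence-free pair, and lifting these homotopies through the fibrations $p$ and $p'$ (applied to $f$ and $g$ separately) yields a homotopic pair $(f',g')$ whose coincidences project into those of its base, so $\Coin(f',g') = \varnothing$, giving $N(f,g)=0$. Assume henceforth $\det(\bar g_\# - \bar f_\#) \neq 0$. The same lifting trick, combined with a minimal realization of $\Coin(\bar f,\bar g)$ as $r = N(\bar f,\bar g)$ isolated points $b_1,\ldots,b_r$ (one per essential class), and a subsequent fiberwise homotopy, lets us assume that $\Coin(f,g) = \bigsqcup_{j=1}^r \Coin(f_{b_j},g_{b_j})$ consists of isolated points.

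Next, I would analyze the coincidence class structure. Suppose $x \in E_{b_j}$ and $y \in E_{b_{j'}}$ lie in the same $(f,g)$-coincidence class via a path $\omega$: projecting, $p\omega$ witnesses $b_j$ and $b_{j'}$ being in the same essential $(\bar f,\bar g)$-class, forcing $j = j'$. For $x, y$ in the same fiber $E_{b_j}$ and the same $(f,g)$-class, the loop $p\omega$ at $b_j$ satisfies $\bar f_\#([p\omega]) = \bar g_\#([p\omega])$, so injectivity forces $[p\omega] = 0$ in $\pi_1(B,b_j)$. A null-homotopy of $p\omega$ rel endpoints lifts, via the relative homotopy lifting property of the fiber bundle $p$, to a deformation of $\omega$ rel endpoints into a path $\omega'$ lying entirely in $E_{b_j}$, while preserving $f\omega' \simeq g\omega'$. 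Conversely, a path inside $E_{b_j}$ witnessing the fiber class relation also witnesses the total-space relation. Hence the coincidence classes of $(f,g)$ are in bijection with the disjoint union of coincidence classes of $(f_{b_j}, g_{b_j})$ over $j = 1,\ldots,r$.

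Finally, I compare indices. In any local product trivialization of the fiber bundle near $x \in E_{b_j}$, the local coincidence index of $(f,g)$ at $x$ factors as the product of the local index of $(\bar f,\bar g)$ at $b_j$, which equals $\eps$, and the local index of $(f_{b_j},g_{b_j})$ at $x$. Summing the local indices over an $(f,g)$-coincidence class in $E_{b_j}$, its total index equals $\eps$ times the index of the corresponding class of $(f_{b_j},g_{b_j})$. Taking absolute values and summing yields $N(f,g) = \sum_{j=1}^r N(f_{b_j},g_{b_j})$. The main obstacle will be verifying the local index factorization in the non-orientable fiber bundle setting: Jezierski's classical product formula \cite{jezierski1989} assumes a globally oriented bundle, whereas here only $E$, $E'$, $B$, $B'$, and the individual fibers are orientable; the computation must therefore be done locally in a trivialization, and one must verify that the sign contribution $\eps$ coming from the base is consistent along each coincidence class of $(f,g)$ regardless of the choice of fiber orientation induced from $E$ and $B$.
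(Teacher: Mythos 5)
Your route differs from the paper's: the paper does not re-derive the class decomposition at all, but invokes Jezierski's Lemma 6.1, which gives $N(f,g)=C_1+\cdots+C_s$ with $C_i$ the number of essential orbits of the $C(\bar{f}_\#,\bar{g}_\#)$-action on the $K$-Nielsen classes of $(f_b,g_b)$, and then merely checks that for a torus base $K=1$ (via $\pi_2(B')=0$ and the long exact homotopy sequence) and $C(\bar{f}_\#,\bar{g}_\#)=1$ (via $\det(\bar{g}_\#-\bar{f}_\#)\neq 0$). You instead attempt to reprove the content of that lemma from scratch, and there is a genuine gap at the key step, namely the claimed bijection between coincidence classes of $(f,g)$ and those of the fibre maps. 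You correctly show that a Nielsen path $\omega$ between $x,y\in E_{b_j}$ can be deformed rel endpoints to a path $\omega'$ inside $E_{b_j}$ with $f\omega'\simeq g\omega'$ rel endpoints \emph{in $E'$}. But to conclude that $x$ and $y$ lie in the same class of $(f_{b_j},g_{b_j})$ you need $f\omega'\simeq g\omega'$ rel endpoints \emph{in the fibre ${E'}_{\!b'}$}. Equivalently, the loop $f\omega'\cdot(g\omega')^{-1}$, which lies in ${E'}_{\!b'}$ and is null-homotopic in $E'$, must already be null-homotopic in ${E'}_{\!b'}$; this is exactly the condition $K=\ker(\pi_1({E'}_{\!b'})\to\pi_1(E'))=1$. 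This is false for general fibre bundles (sphere bundles are the standard counterexample), and it is precisely here that the torus hypothesis must be used a second time: $\pi_2(B')=1$ forces $K=1$. Without this, one class of $(f,g)$ may split into several classes of $(f_{b_j},g_{b_j})$, and index cancellation would invalidate the count $N(f,g)=\sum_j N(f_{b_j},g_{b_j})$. You only prove the easy converse implication.

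A secondary weakness is that the local index factorisation $\ind(f,g;x)=\eps\cdot\ind(f_{b_j},g_{b_j};x)$ is asserted rather than proved; you flag it yourself, and it is repairable (it is the content of Section 5 of \cite{jezierski1989}, and since each class of $(f,g)$ is contained in a single fibre the sign ambiguity coming from the non-orientability of the bundle is constant on each class), but as written it is the second place where your argument silently leans on the machinery the paper cites explicitly. Your handling of the degenerate case $\det(\bar{g}_\#-\bar{f}_\#)=0$ by homotoping $(\bar{f},\bar{g})$ to a coincidence-free pair and lifting the homotopies is correct and arguably more self-contained than the paper's appeal to equation \eqref{eq:N-Ci} in that case.
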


We will use the machinery developed in \cite{jezierski1989}, but most of this will become trivial in case the base spaces are tori, so we will not go over all the definitions in general and refer the reader to \cite{jezierski1989} for more details.

The results of \cite{jezierski1989} that we recall below are valid for any commutative diagram \[
\begin{tikzcd}
E \ar[r,"{(f,g)}"] \ar[d,"p"'] & E' \ar[d,"p'"] \\
B \ar[r,"{(\bar{f},\bar{g})}"] & B'
\end{tikzcd}
\]
of fiber bundles where all spaces involved are compact connected orientable manifolds of respectively equal dimensions.

For $e'\in E'$, we can consider the group \[
K=\ker(\pi_1({E'}_{\!\!p'(e')},e')\to \pi_1(E',e')).
\]
Since all fibers of a fiber bundle are homeomorphic and the spaces involved are path connected, this is the same group for all $e'\in E'$.

In \cite[Section 1]{jezierski1989}, the sets of \emph{$K$-Nielsen classes} $\nabla_{\!K}(f_b,g_b)$ are defined. We will only need this in the special case $K=1$, where these coincide with the ordinary coincidence classes of the pair $(f_b,g_b)$.

At the beginning of \cite[Section 6]{jezierski1989} it is observed that, if $\bar{A}_1,\ldots,\bar{A}_s$ are the essential coincidence classes of $(\bar{f},\bar{g})$, then \begin{equation}\label{eq:N-Ci}
N(f,g)=C_1+\ldots+C_s
\end{equation}
where, for any $b\in \bar{A}_i$, the number $C_i$ is equal to the number of \emph{essential orbits} (i.e.\ the orbits that contain essential classes) of the action of \[
C(\bar{f}_\#,\bar{g}_\#)=\{\alpha\in \pi_1(B)\mid \bar{f}_\#(\alpha)=\bar{g}_\#(\alpha) \}
\]
on $\nabla_{\!K}(f_b,g_b)$ \cite[Lemma 6.1]{jezierski1989}. Again, we will not go over the definition of this action as we will only need this in case $C(\bar{f}_\#,\bar{g}_\#)=1$. In that case, all orbits are singletons, and $C_i$ is just the number of essential $K$-Nielsen classes of $(f_b,g_b)$. If moreover $K=1$, this is just the number of essential coincidence classes, i.e.\ $N(f_b,g_b)$.

We can conclude that, if $K=1$ and $C(\bar{f}_\#,\bar{g}_\#)=1$, then \[
N(f,g)=\sum_b N(f_b,g_b)
\]
where the sum runs over representatives for the essential coincidence classes of $(\bar{f},\bar{g})$. Note that in case $N(\bar{f},\bar{g})=0$, this formula holds regardless of the conditions $K=1$ and $C(\bar{f}_\#,\bar{g}_\#)=1$: since there are no essential coincidence classes of $(\bar{f},\bar{g})$, the sum on the right is empty, and equation \eqref{eq:N-Ci} implies that $N(f,g)=0$.

Thus, to prove Theorem \ref{thm:N-fiber-tori}, it suffices to prove that if the base spaces are tori and $N(\bar{f},\bar{g})\neq 0$, then $K=1$ and $C(\bar{f}_\#,\bar{g}_\#)=1$.

\begin{proof}[Proof of Theorem \ref{thm:N-fiber-tori}]
Suppose $B$ and $B'$ are tori of equal dimension $k$. 

Choose $e'\in E'$. Since $\pi_2(B')=1$, it follows from the long exact homotopy sequence \[
\cdots \longrightarrow \pi_2(B',p'(e')) \longrightarrow \pi_1({E'}_{\!\!p'(e')},e') \longrightarrow \pi_1(E',e') \longrightarrow \cdots
\]
that the map $\pi_1({E'}_{\!\!p'(e')},e') \to \pi_1(E',e')$ is injective, hence $K=1$. 

Now suppose $N(\bar{f},\bar{g})\neq 0$; we will show that $C(\bar{f}_\#,\bar{g}_\#)=1$.
Since $B$ and $B'$ are $k$-dimensional tori, both are homeomorphic to $\ZZ^k\orb \RR^k$. There are isomorphisms $\pi_1(B),\pi_1(B')\cong \ZZ^k$ under which $\bar{f}_\#,\bar{g}_\#:\ZZ^k\to \ZZ^k$ coincide with the induced morphisms of $f$ and $g$, so that Theorem \ref{thm:N-tori} yields $N(\bar{f},\bar{g})=\left|\det(\bar{g}_\#-\bar{f}_\#)\right|$. Under those isomorphisms, an element of $C(\bar{f}_\#,\bar{g}_\#)$ can be viewed as an element $v\in \ZZ^k$ with $(\bar{f}_\#-\bar{g}_\#)v=0$. Since $N(\bar{f},\bar{g})\neq 0$, so $\det(\bar{f}_\#-\bar{g}_\#)\neq 0$, the only such element is $v=0$.
\end{proof}

\subsection{Coincidences on solvmanifolds}

We now consider coincidences of two maps $f,q:\Gamma\orb G\to\Gamma'\orb G'$ between special solvmanifolds, where we will later specialise to the case where $q$ is a finite covering map.

\begin{thm}\label{thm:N-sol}
Let $f,q:\Gamma\orb G\to\Gamma'\orb G'$ be maps between equal-dimensional special solvmanifolds, with induced morphisms $\varphi,\psi:\Gamma\to \Gamma'$. Let $\{\varphi_i:\Lambda_i\to \Lambda'_i \}_{i\geq 0}$ and $\{\psi_i:\Lambda_i\to \Lambda'_i \}_{i\geq 0}$ be the respective linearisations of $\varphi$ and $\psi$. Suppose the following two conditions are satisfied: \begin{itemize}
\item[(1)] For all $i\geq 0$, the groups $\Lambda_i$ and $\Lambda'_i$ have the same rank $k_i$. In particular, we can view $\varphi_i$ and $\psi_i$ as linear maps $\ZZ^{k_i}\to \ZZ^{k_i}$, and the nilpotency class of $\Gamma_1$ and $\Gamma'_1$ is the same, say $c$.
\item[(2)] Either $\det(\psi_0-\varphi_0)=0$, or the expression $\det(\psi_i-\mu'_i(v)\varphi_i)$ is independent of $v\in \ZZ^{k_0}$ for all $i\geq 1$, where $\{\mu'_i:\ZZ^{k_0}\to \GL_{k_i}(\ZZ)\}_{i\geq 1}$ is the linearisation of $\Gamma'$.
\end{itemize}
Then \[
N(f,q)=\prod_{i=0}^c \left|\det(\psi_i-\varphi_i)\right|.
\]
\end{thm}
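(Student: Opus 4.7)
The plan is to apply Theorem \ref{thm:N-fiber-tori} to the minimal Mostow fibrations of $\Gamma\orb G$ and $\Gamma'\orb G'$, and to reduce the fiber contributions to Lemma \ref{lem:N-nil}. By Theorem \ref{thm:Mostow} I may replace $f$ and $q$ by homotopic fiber maps, which leaves $N(f,q)$ and the induced morphisms invariant (Remark \ref{rmk:phi-hom-inv}). The total spaces, the $k_0$-torus base spaces from Step \ref{step:base-space}, and the nilmanifold fibers are all compact connected orientable manifolds of matching dimensions under condition~(1), so Theorem \ref{thm:N-fiber-tori} gives
\[
N(f,q)=\sum_b N(f_b,q_b),
\]
summed over representatives of essential coincidence classes of the induced base maps $\bar f,\bar q:\Gamma N\orb G\to\Gamma'N'\orb G'$. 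By Lemma \ref{lem:lin-f-sol-base} these base maps induce $\varphi_0,\psi_0:\ZZ^{k_0}\to\ZZ^{k_0}$ on $\pi_1$, so Theorem \ref{thm:N-tori} gives $N(\bar f,\bar q)=|\det(\psi_0-\varphi_0)|$. If $\det(\psi_0-\varphi_0)=0$ the sum is empty and the $i=0$ factor of the product in the theorem already vanishes; otherwise there are exactly $|\det(\psi_0-\varphi_0)|$ coincidence classes to sum over.

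The crux is the computation of $N(f_b,q_b)$ for a representative $b=\Gamma Ng$. I identify the source fiber with $\Gamma_1\orb N$ via $h_g$ and the target fiber with $\Gamma'_1\orb N'$ via $h'_{g'}$, where $g':=\tilde f(g)$, as in Step \ref{step:fibers}. Lemma \ref{lem:lin-f-sol-fiber} then says $f_b$ corresponds to a nilmanifold map inducing $\varphi|_{\Gamma_1}$. The subtlety is $q_b$: the condition $\bar f(b)=\bar q(b)$ forces $\tilde q(g)\in\Gamma'N'g'$, hence $\tilde q(g)=\gamma_b n_b g'$ for some $\gamma_b\in\Gamma'$, $n_b\in N'$ (using $\Gamma'N'=N'\Gamma'$ from Step \ref{step:GammaN}). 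I pass to the shifted lift $\tilde q^{\ast}:=\gamma_b^{-1}\tilde q$ of $q$, which induces $\mu(\gamma_b^{-1})\psi$ on $\Gamma$ and satisfies $\tilde q^{\ast}(g)=n_b g'\in N'g'$. Applying Lemma \ref{lem:lin-f-sol-fiber} to this lift and using Remark \ref{rmk:lin-f-sol-fiber} to switch from $h'_{n_b g'}$ back to $h'_{g'}$, the fiber map $q_b$ corresponds to a nilmanifold map inducing $\mu(\gamma_b^{-1})\psi|_{\Gamma_1}$, whose level-$i$ linearisation is $\mu'_i(\gamma_b^{-1}\Gamma'_1)\psi_i$ by Lemma \ref{lem:phi-Ai}. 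Lemma \ref{lem:N-nil} then yields
\[
N(f_b,q_b)=\prod_{i=1}^{c}\bigl|\det\bigl(\mu'_i(\gamma_b^{-1}\Gamma'_1)\psi_i-\varphi_i\bigr)\bigr|.
\]

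To conclude, I factor $\mu'_i(\gamma_b^{-1}\Gamma'_1)\in\GL_{k_i}(\ZZ)$ out of each determinant, using $|\det\mu'_i(v)|=1$ and $\mu'_i(v)^{-1}=\mu'_i(-v)$, to rewrite the $i$-th factor as $|\det(\psi_i-\mu'_i(\gamma_b\Gamma'_1)\varphi_i)|$. Condition~(2) makes this independent of $b$, and evaluating at $v=0$ identifies the common value with $|\det(\psi_i-\varphi_i)|$. Multiplying by the number $|\det(\psi_0-\varphi_0)|$ of essential coincidence classes then gives the product formula. The main obstacle is the identification of the fiber morphism of $q_b$: because $\tilde q(g)$ lies in $\Gamma'N'g'$ but not necessarily in $N'g'$, Lemma \ref{lem:lin-f-sol-fiber} does not apply directly to $\tilde q$; one must absorb the $\Gamma'$-part of $\tilde q(g)(g')^{-1}$ into the lift, and then carefully track the resulting conjugation by $\gamma_b$ so that it descends to the automorphisms $\mu'_i$ on each $\Lambda'_i$.
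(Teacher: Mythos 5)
Your proposal is correct and follows essentially the same route as the paper: fiber the computation over the minimal Mostow fibrations via Theorem \ref{thm:N-fiber-tori}, handle the base with Lemma \ref{lem:lin-f-sol-base} and Theorem \ref{thm:N-tori}, identify the fiber morphisms with Lemma \ref{lem:lin-f-sol-fiber} and Remark \ref{rmk:lin-f-sol-fiber}, apply Lemma \ref{lem:N-nil}, and invoke condition (2). The only (harmless, symmetric) difference is that the paper anchors the target fiber at $g'=\tilde{q}(g)$ and shifts the lift of $f$, so the conjugation lands directly on $\varphi_i$, whereas you anchor at $g'=\tilde{f}(g)$, shift the lift of $q$, and then factor the unimodular matrix $\mu'_i(\gamma_b^{-1}\Gamma'_1)$ out of the determinant to reach the same expression $\left|\det(\psi_i-\mu'_i(v)\varphi_i)\right|$.
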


\begin{proof}
Let $p:\Gamma\orb G\to \Gamma N\orb G$ and $p':\Gamma'\orb G'\to \Gamma' N'\orb G'$ be the minimal Mostow fibrations. By condition (1), the corresponding spaces in these fibrations have respectively equal dimensions.

Since the Nielsen number is a homotopy invariant, we may assume $f$ and $q$ are fiber maps of the fibrations $p$ and $p'$. These maps still induce the same morphisms $\varphi$ and $\psi$. By Theorem \ref{thm:N-fiber-tori}, we can write \[
N(f,q)=\sum_{\Gamma Ng} N(f_{\Gamma Ng},q_{\Gamma Ng})
\]
where the sum runs over representatives for the essential coincidence classes of the pair $(\bar{f},\bar{q})$.

By Lemma \ref{lem:lin-f-sol-base}, the maps $\bar{f}$ and $\bar{q}$ are maps on tori with respective fundamental group morphisms $\varphi_0:\ZZ^{k_0}\to \ZZ^{k_0}$ and $\psi_0:\ZZ^{k_0}\to \ZZ^{k_0}$. Thus, by Theorem \ref{thm:N-tori}, \[
N(\bar{f},\bar{q})=\left|\det(\psi_0-\varphi_0)\right|.
\]
If this is zero, the desired equality for $N(f,g)$ holds. From now on, we suppose this is non-zero.
In that case, consider a point $\Gamma Ng\in \Coin(\bar{f},\bar{q})$, and choose $g'\in G'$ such that $\bar{f}(\Gamma Ng)=\bar{q}(\Gamma Ng)=\Gamma'N'g'$. By Lemma \ref{lem:lin-f-sol-fiber}, the maps $f'_{\Gamma Ng}$ and $q'_{\Gamma Ng}$ fitting into the diagram \[
\begin{tikzcd}
\Gamma \orb \Gamma Ng \ar[r,"f_{\Gamma Ng}","q_{\Gamma Ng}"'] \ar[d,"h_g"'] & \Gamma'\orb \Gamma'N'g' \ar[d,"h_{g'}"] \\
\Gamma_1\orb N \ar[r,"f'_{\Gamma Ng}","q'_{\Gamma Ng}"'] & \Gamma'_1\orb N'
\end{tikzcd}
\]
respectively induce the morphisms $\mu(\gamma_g)\varphi':\Gamma_1\to \Gamma'_1$ and $\mu(\gamma'_g)\psi':\Gamma_1\to \Gamma'_1$, for some $\gamma_g,\gamma'_g\in \Gamma'$ depending on $\Gamma Ng$. Since $h_g$ and $h_{g'}$ are homeomorphisms, we can write \[
N(f_{\Gamma Ng},q_{\Gamma Ng})=N(h_{g'}^{-1}f'_{\Gamma Ng}h_g,h_{g'}^{-1}q'_{\Gamma Ng}h_g)=N(f'_{\Gamma Ng},q'_{\Gamma Ng}).
\]

Let $p_1:\Gamma_1\orb N_1\to \Gamma_1N_2\orb N_1$ and $p'_1:\Gamma'_1\orb N'_1\to \Gamma'_1N'_2\orb N'_1$ be the fibrations from Lemma \ref{lem:nil-fib}. By condition (1), the corresponding spaces in these fibrations have respectively equal dimensions. We can apply the same reasoning as above (now using Lemma \ref{lem:lin-f-nil} instead of Lemmas \ref{lem:lin-f-sol-base} and \ref{lem:lin-f-sol-fiber}) to the maps $f'_{\Gamma Ng},q'_{\Gamma Ng}:\Gamma_1\orb N_1\to\Gamma'_1\orb N'_1$. Note that the morphisms $\mu(\gamma_g)\varphi'$ and $\mu(\gamma'_g)\psi'$ induced by $f'_{\Gamma Ng}$ and $q'_{\Gamma Ng}$ have respective linearisations $\{\mu'_i(v_g)\varphi_i:\ZZ^{k_i}\to \ZZ^{k_i}\}_{i\geq 1}$ and $\{\mu'_i(v'_g)\psi_i:\ZZ^{k_i}\to \ZZ^{k_i}\}_{i\geq 1}$, where $v_g,v'_g\in \ZZ^{k_0}$ are the elements representing $\gamma_g\Gamma'_1,\gamma'_g\Gamma'_1\in \Lambda'_0$.

We get \[
N(f'_{\Gamma Ng},g'_{\Gamma Ng})=\sum_{\Gamma_1N_2n} N((f'_{\Gamma Ng})'_{\Gamma_1N_2n},(\bar{q}'_{\Gamma Ng})'_{\Gamma_1N_2n})
\]
where the sum runs over representatives for the essential coincidence classes of $(\bar{f}'_{\Gamma Ng},\bar{q}'_{\Gamma Ng})$, and therefore has \[
N(\bar{f}'_{\Gamma Ng},\bar{q}'_{\Gamma Ng})=\left|\det(\mu'_1(v_g)\varphi_1-\mu'_1(v'_g)\psi_1)\right|
\]
terms. The respective morphisms $\Gamma_2\to \Gamma'_2$ induced by $(\bar{f}'_{\Gamma Ng})'_{\Gamma_1N_2n}$ and $(\bar{q}'_{\Gamma Ng})'_{\Gamma_1N_2n}$ are $\mu(\gamma_n)\mu(\gamma_g)\varphi':\Gamma_2\to \Gamma'_2$ and $\mu(\gamma'_n)\mu(\gamma'_g)\psi':\Gamma_2\to \Gamma'_2$, for some $\gamma_n,\gamma'_n\in \Gamma'_1$ depending on $\Gamma_1N_2n$. But for all $\gamma_1\in \Gamma'_1$ and $i\geq 2$, the morphism \[ 
\Lambda'_i\to \Lambda'_i:\gamma_i\Gamma'_{i+1}\mapsto \gamma_1\gamma_i(\gamma_1)^{-1}\Gamma'_{i+1}=[\gamma_1,\gamma_i]\gamma_i\Gamma'_{i+1}, 
\]
induced by $\mu(\gamma_1)$ is trivial, since $[\Gamma'_1,\Gamma'_i]\subseteq \Gamma'_{i+1}$. Thus, the morphisms $\mu(\gamma_n)\mu(\gamma_g)\varphi'$ and $\mu(\gamma'_n)\mu(\gamma'_g)\psi'$ induced by $(\bar{f}'_{\Gamma Ng})'_{\Gamma_1N_2n}$ and $(\bar{q}'_{\Gamma Ng})'_{\Gamma_1N_2n}$ have linearisations $\{\mu'_i(v_g)\varphi_i:\ZZ^{k_i}\to \ZZ^{k_i}\}_{i\geq 2}$ and $\{\mu'_i(v'_g)\psi_i:\ZZ^{k_i}\to \ZZ^{k_i}\}_{i\geq 2}$, independently of $n$. 

Continuing inductively, we find \[
N(f,g)=\sum_{\Gamma Ng} \prod_{i=1}^c \left|\det(\mu'_i(v'_g)\psi_i-\mu'_i(v_g)\varphi_i)\right|
\]
where the sum runs over representatives for the essential coincidence classes of $(\bar{f},\bar{g})$ and has $N(\bar{f},\bar{g})=\left|\det(\psi_0-\varphi_0)\right|$ terms. Note that \begin{align*}
\left|\det(\mu'_i(v'_g)\psi_i-\mu'_i(v_g)\varphi_i)\right|&=\left|\det(\mu'_i(v'_g))\right|\left|\det(\psi_i-\mu'_i(v_g-v'_g)\varphi_i)\right|\\&=\left|\det(\psi_i-\mu'_i(v_g-v'_g)\varphi_i)\right|,
\end{align*}
so under condition (2) (keeping in mind that we were assuming $N(\bar{f},\bar{g})\neq 0$), we obtain \[
N(f,g)=\left|\det(\psi_0-\varphi_0)\right|\prod_{i=1}^c \left|\det(\psi_i-\varphi_i)\right|=\prod_{i=0}^c \left|\det(\psi_i-\varphi_i)\right|. \qedhere
\]
\end{proof}

\begin{rmk}\label{rmk:N-sol-indep}
The induced morphisms $\varphi$ and $\psi$ are only defined up to conjugation with an element in $\Gamma'$ (determined by the choice of lifts of $f$ and $q$ used to compute them). However, the formula is independent of this choice: if $\mu(\gamma)\varphi$ and $\mu(\gamma')\psi$ are other morphisms induced by $f$ and $q$ (with $\gamma,\gamma'\in \Gamma'$), their linearisation morphisms for $i=0$ are also $\varphi_0$ and $\psi_0$; for $i\geq 1$ they are $\mu'_i(v)\varphi_i$ and $\mu'_i(v')\psi_i$, where $v,v'\in \ZZ^{k_0}$ are the elements representing $\gamma\Gamma'_1,\gamma'\Gamma'_1\in \Lambda'_0$, for $j=1,2$. If $\det(\psi_0-\varphi_0)=0$, the formula remains the same. Otherwise, it follows from condition (2) that
\begin{align*}
\left|\det(\mu'_i(v')\psi_i-\mu'_i(v)\varphi_i)\right|&=\left|\det(\mu'_i(v'))\right|\left|\det(\psi_i-\mu'_i(v-v')\varphi_i)\right|\\&=\left|\det(\psi_i-\varphi_i)\right|
\end{align*}
for all $i\geq 1$.
\end{rmk}

\begin{ex}
\label{ex:first-formula} Consider the matrix $A=\begin{bmatrix}2 & 1 \\ 1 & 1\end{bmatrix}$. This matrix has eigenvalues $\alpha=\frac12(3 + \sqrt5)$ and $\alpha^{-1}$, hence we can consider a matrix $P\in \GL_2(\RR)$ with 
\[ A = P \begin{bmatrix} \alpha & 0 \\ 0 & \alpha^{-1} \end{bmatrix}P^{-1}\]
and define
\[ A^t = P \begin{bmatrix} \alpha^t & 0 \\ 0 & \alpha^{-t} \end{bmatrix}P^{-1} ,\quad \forall t \in \RR.\]
Let $G=G'= \RR^2 \rtimes_\eta \RR$ with $\eta:\RR \to \GL_2(\RR): t \mapsto A^t$, and take $\Gamma' = \ZZ^2 \rtimes_\eta \ZZ$ and $\Gamma= \ZZ^2 \rtimes_\eta (2 \ZZ)$.

We consider the coincidences of the map $f: \Gamma \orb G \to \Gamma' \orb G : \Gamma (\bar{x}, y) \mapsto \Gamma' ( \bar{0},- \frac{y}{2})$ with the natural projection $q: \Gamma \orb G \to \Gamma' \orb G : \Gamma (\bar{x}, y) \mapsto \Gamma' (\bar{x}, y )$.
To see that $f$ is well-defined, consider the map $\tilde{f}: G \to G : (\bar{x} , y) \mapsto (\bar{0}, -\frac{y}{2})$. Note that for all $(\bar{z}, 2 t ) \in \Gamma$ (so $\bar{z} 
\in \ZZ^2$ and $t\in \ZZ$) it holds that 
\[ 
\tilde{f}((\bar{z}, 2 t ) (\bar{x}, y) )=\tilde{f}(\bar{z}+A^{2t}\bar{x},2t+y)= (\bar{0}, - t -\tfrac{y}{2}) = (\bar{0}, -t) \tilde{f}(\bar{x}, y), \]
from which we indeed get that $f$ is well-defined and that the morphism induced by $f$ is given by 
\[ \varphi: \Gamma \to \Gamma' : (\bar{z}, 2 t )\mapsto (\bar{0}, -t).\]
As a lift $\tilde{q}$ of $q$ we can just take the identity map on $G$, so the morphism induced by $q$ is the inclusion $\psi:\Gamma \to \Gamma': (\bar{z}, 2 t ) \mapsto (\bar{z}, 2 t )$. 

Note that $\Lambda_0 =2 \ZZ$ and $\Lambda_0' = \ZZ$. In order to describe $\varphi_0$ and $\psi_0$ as in Theorem~\ref{thm:N-sol} we have to identify both $\Lambda_0$ and $\Lambda_0'$ with $\ZZ$.
It clear that under $\varphi_0$ the generator of $\Lambda_0$ (which is $2$) is mapped to 
$-1$, so $\varphi_0: \ZZ \to \ZZ: 1 \mapsto -1$ or $\varphi_0$ can be identified with the $1\times 1$--matrix $\begin{bmatrix} -1 \end{bmatrix}$. For $\psi_0$, which is just the inclusion, the generator is mapped onto 2 times the generator, so $\psi_0$ can be identified with the $1\times 1$--matrix $\begin{bmatrix} 2 \end{bmatrix}$.
It follows that $\det(\psi_0 - \varphi_0) = 2 - (-1) =3 \neq 0$.

We also have that $\Lambda_1 = \Lambda_1' = \ZZ^2$, $\varphi_1 = \begin{bmatrix} 0 & 0 \\ 0 & 0 
\end{bmatrix} $ and $\psi_1= \begin{bmatrix} 1 & 0 \\ 0 & 1 \end{bmatrix}$.
As $\varphi_1$ is the zero matrix, the condition (2) of Theorem~\ref{thm:N-sol} is trivially satisfied and we find that 
\[ N(f,q) = \left|\det(\psi_0 - \varphi_0)\right| \cdot \left|\det(\psi_1 - \varphi_1)\right| = 3 .\]
In fact, one can check that $\Coin(f,q)=\{ \Gamma(\bar0,0),\; \Gamma(\bar0,\frac23),\; \Gamma(\bar0,\frac43)\}$ and so $f$ and $q$ realise the minimum number of coincidence points among all pairs of maps that are homotopic to $(f,q)$.
\end{ex}

From now on, we specialise to the setting where $G=G'$, $\Gamma'$ is net (see Definition \ref{def:net}), $\Gamma$ is a finite index index subgroup of $\Gamma'$ and $q:\Gamma\orb G\to \Gamma'\orb G$ is the natural projection. In this setting, we will show that the conditions (1) and (2) are satisfied.

\begin{lem}\label{lem:gamma-fi}
If $\Gamma$ is a finite index subgroup of $\Gamma'$, then $\Gamma_1$ is a finite index subgroup of $\Gamma'_1$, and $\Lambda_0$ can be identified with a finite index subgroup of $\Lambda'_0$. (In particular, they have the same rank.)
\end{lem}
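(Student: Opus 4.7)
The plan is to run everything through Hirsch length arithmetic, which is well defined since by Remark \ref{rmk:polycyclic} the groups $\Gamma$ and $\Gamma'$ are polycyclic. Write $h(\cdot)$ for the Hirsch length. The short exact sequence \eqref{eq:gamma-ses} gives $h(\Gamma)=h(\Gamma_1)+h(\Lambda_0)$ and $h(\Gamma')=h(\Gamma'_1)+h(\Lambda'_0)$, while $[\Gamma':\Gamma]<\infty$ gives $h(\Gamma)=h(\Gamma')$. I would like to promote these identities, together with two easy one-sided inequalities, into the full conclusion.

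First I would verify that $\Gamma_1\subseteq \Gamma'_1$: for $g\in \Gamma_1$ some power $g^\ell$ lies in $[\Gamma,\Gamma]\subseteq [\Gamma',\Gamma']$, so $g\in \sqrt[\Gamma']{[\Gamma',\Gamma']}=\Gamma'_1$. This lets the inclusion $\Gamma\hookrightarrow \Gamma'$ descend to a group morphism $\iota\colon \Lambda_0\to \Lambda'_0$, whose kernel is $(\Gamma\cap \Gamma'_1)/\Gamma_1$ and whose image is $\Gamma\Gamma'_1/\Gamma'_1\subseteq \Lambda'_0$.

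Next I would extract the two opposing inequalities. On the nilpotent side, $\Gamma_1\subseteq \Gamma'_1$ immediately yields $h(\Gamma_1)\leq h(\Gamma'_1)$. On the abelian side, the image of $\iota$ has index $[\Gamma':\Gamma\Gamma'_1]\leq [\Gamma':\Gamma]<\infty$ in $\Lambda'_0$, so $h(\operatorname{im}\iota)=h(\Lambda'_0)$; since $\operatorname{im}\iota$ is a quotient of $\Lambda_0$, this gives $h(\Lambda_0)\geq h(\Lambda'_0)$. Plugging both inequalities into $h(\Gamma_1)+h(\Lambda_0)=h(\Gamma'_1)+h(\Lambda'_0)$ forces the two inequalities to be equalities: $h(\Gamma_1)=h(\Gamma'_1)$ and $h(\Lambda_0)=h(\Lambda'_0)$.

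To finish, I would use $h(\Gamma_1)=h(\Gamma'_1)$ with $\Gamma_1\subseteq \Gamma'_1$: for finitely generated polycyclic (even torsion-free nilpotent) groups, inclusion with equal Hirsch length implies finite index, so $[\Gamma'_1:\Gamma_1]<\infty$. For the second assertion, the kernel of $\iota$ is a finitely generated abelian group of rank $h(\Lambda_0)-h(\operatorname{im}\iota)=0$, hence a torsion group, but it sits inside the free abelian group $\Lambda_0$, so it is trivial. Thus $\iota$ is an embedding whose image has finite index in $\Lambda'_0$, as already noted. I don't expect any real obstacle; the only mildly delicate point is arranging the two Hirsch length inequalities in opposite directions so that the additivity identity promotes them to equalities.
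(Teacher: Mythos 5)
There is a genuine gap, and it is fatal: the step ``plugging both inequalities into $h(\Gamma_1)+h(\Lambda_0)=h(\Gamma'_1)+h(\Lambda'_0)$ forces the two inequalities to be equalities'' is not valid, because your two inequalities point in incompatible directions. You have $h(\Gamma_1)\leq h(\Gamma'_1)$ and $h(\Lambda_0)\geq h(\Lambda'_0)$; rewriting the additivity identity as $h(\Gamma_1)-h(\Gamma'_1)=h(\Lambda'_0)-h(\Lambda_0)$, both sides are merely $\leq 0$, which is perfectly consistent without either being zero. To force equality you would need the inequality $h(\Lambda_0)\leq h(\Lambda'_0)$, i.e.\ injectivity of $\iota$, i.e.\ $\Gamma\cap\Gamma'_1=\Gamma_1$ --- and that is precisely the hard content of the lemma.

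The underlying problem is that your argument never uses the standing hypothesis that $\Gamma'$ is net, and the statement is false without it. Take $\Gamma'=\ZZ^2\rtimes_A\ZZ$ with $A=\left(\begin{smallmatrix}0&-1\\1&0\end{smallmatrix}\right)$ the order-$4$ rotation, and $\Gamma=\ZZ^2\rtimes_A 4\ZZ\cong\ZZ^3$ of index $4$. Then $[\Gamma',\Gamma']=(A-I)\ZZ^2$ has finite index in $\ZZ^2$, so $\Gamma'_1=\ZZ^2$ and $\Lambda'_0\cong\ZZ$; but $\Gamma$ is abelian, so $\Gamma_1=1$ and $\Lambda_0\cong\ZZ^3$. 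Here $h(\Gamma_1)=0<2=h(\Gamma'_1)$ and $h(\Lambda_0)=3>1=h(\Lambda'_0)$: exactly the scenario your ``forcing'' step purports to exclude. (Of course this $\Gamma'$ is not net, nor even $\NR$, since $A$ has eigenvalues $\pm i$.) The paper's proof instead invokes \cite[Lemma 5.5]{iris} --- which is where netness (or at least the $\NR$ property) enters --- to conclude that $[\Gamma,\Gamma]$ has finite index, say $r$, in $[\Gamma',\Gamma']$; then any $g\in\Gamma\cap\Gamma'_1$ has $g^{\ell}\in[\Gamma',\Gamma']$ and hence $g^{\ell r}\in[\Gamma,\Gamma]$, giving $\Gamma\cap\Gamma'_1\subseteq\sqrt[\Gamma]{[\Gamma,\Gamma]}=\Gamma_1$, after which the finite-index claims are routine. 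Your Hirsch-length bookkeeping would indeed close up once you have that one inclusion, but it cannot substitute for it.
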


\begin{proof}
First, we show that $\Gamma_1=\Gamma\cap\Gamma'_1$. The inclusion $\Gamma_1\subseteq \Gamma\cap\Gamma'_1$ is clear. For the converse, we use \cite[Lemma 5.5]{iris}, which states that if $\Gamma'$ is $\NR$ (so in particular if it is net) and $\Gamma$ is a finite index subgroup of $\Gamma'$, then $[\Gamma,\Gamma]$ is a finite index subgroup of $[\Gamma',\Gamma']$. Say it is of index $r$. For $g\in \Gamma\cap\Gamma'_1$, take $\ell$ such that $g^\ell\in [\Gamma',\Gamma']$; then $g^{\ell r}\in [\Gamma,\Gamma]$, so $g\in \Gamma_1$.

Since $\Gamma$ is of finite index in $\Gamma'$, the group $\Gamma\cap\Gamma'_1$ is of finite index in $\Gamma'_1$. For the second statement, we can write \[
\Gamma/\Gamma_1=\Gamma/(\Gamma\cap\Gamma'_1)\cong \Gamma\Gamma'_1/\Gamma'_1.
\]
Since $\Gamma$, so certainly $\Gamma\Gamma'_1$, is a finite index subgroup of $\Gamma'$, this quotient is a finite index subgroup of $\Gamma'/\Gamma'_1$.
\end{proof}

\begin{lem}\label{lem:gamma-i-fi}
If $\Gamma_1$ is a finite index subgroup of $\Gamma'_1$, then $\Gamma_i$ is a finite index subgroup of $\Gamma'_i$, and $\Lambda_i$ can be identified with a finite index subgroup of $\Lambda'_i$, for all $i\geq 1$. (In particular, they have the same rank.)
\end{lem}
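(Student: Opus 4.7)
The plan is to exploit the fact that $\Gamma_1$ and $\Gamma'_1$, being commensurable finitely generated torsion-free nilpotent groups, share the \emph{same} Mal'cev completion $N$: a finite index subgroup of a lattice in a connected simply connected nilpotent Lie group is itself a lattice in the same Lie group, so by uniqueness of Mal'cev completions, $N$ is unambiguous. Then the identification $\Gamma_i = \Gamma_1 \cap N_i$ with $N_i = \gamma_i(N)$ recalled in Section \ref{subsec:nilmanifolds} applies to both groups with the same Lie subgroups $N_i$, immediately yielding the inclusions $\Gamma_i \subseteq \Gamma'_i = \Gamma'_1 \cap N_i$ for every $i \geq 1$.

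For the finite index statement, I would check that the natural map of coset spaces
\[
\Gamma'_i/\Gamma_i \to \Gamma'_1/\Gamma_1, \quad \gamma\Gamma_i \mapsto \gamma\Gamma_1,
\]
is a well-defined injection: if $\gamma,\delta \in \Gamma'_i \subseteq N_i$ satisfy $\delta^{-1}\gamma \in \Gamma_1$, then $\delta^{-1}\gamma \in \Gamma_1 \cap N_i = \Gamma_i$. Since the target is finite by hypothesis, so is $[\Gamma'_i:\Gamma_i]$.

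For the second half, the inclusion $\Gamma_i \hookrightarrow \Gamma'_i$ sends $\Gamma_{i+1}$ into $\Gamma'_{i+1}$ and so descends to a homomorphism $\Lambda_i \to \Lambda'_i$. Injectivity reduces to the identity $\Gamma_i \cap \Gamma'_{i+1} = \Gamma_{i+1}$, which falls out immediately from $N_{i+1} \subseteq N_i$ via $(\Gamma_1 \cap N_i) \cap (\Gamma'_1 \cap N_{i+1}) = \Gamma_1 \cap N_{i+1}$. The cokernel $\Gamma'_i/(\Gamma_i\,\Gamma'_{i+1})$ is a quotient of the finite group $\Gamma'_i/\Gamma_i$, so $\Lambda_i$ embeds as a finite index subgroup of $\Lambda'_i$, and the ranks agree since finite index subgroups of a free abelian group share its rank.

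The only subtle point, which I regard as the main (though very mild) obstacle, is the common Mal'cev completion step; everything else is bookkeeping with intersections in $N$. Once that is in place, no induction on $i$ is needed, and conditions on $\Gamma'$ such as netness or the $\NR$ property play no role here.
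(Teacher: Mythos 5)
Your proposal is correct and follows essentially the same route as the paper: both pass to the common Mal'cev completion $N$ of $\Gamma_1$ and $\Gamma'_1$, use $\Gamma_i=\Gamma_1\cap\gamma_i(N)$ and $\Gamma'_i=\Gamma'_1\cap\gamma_i(N)$ to get $\Gamma_i=\Gamma_1\cap\Gamma'_i$, and then deduce the finite-index and $\Lambda_i\hookrightarrow\Lambda'_i$ statements by the same coset bookkeeping (the paper phrases this as ``in analogy with Lemma \ref{lem:gamma-fi}''). Your closing observations — no induction needed, netness irrelevant here — also match the paper's own remark.
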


\begin{proof}
Let $N$ be the Mal'cev completion of $\Gamma'_1$, which is also the Mal'cev completion of $\Gamma_1$. Then $\Gamma_i=\Gamma_1\cap \gamma_i(N)$ and $\Gamma'_i=\Gamma'_1\cap \gamma_i(N)$. In particular, $\Gamma_i=\Gamma_1\cap \Gamma'_i$, and the proof proceeds in analogy with the proof of Lemma \ref{lem:gamma-fi}.
\end{proof}

\begin{rmk}
Note that the assumption that $\Gamma'$ is net is only used in Lemma \ref{lem:gamma-fi}, not in Lemma \ref{lem:gamma-i-fi}.
\end{rmk}

We will explicitly use how each group $\Lambda_i$ is identified with a finite index subgroup of $\Lambda'_i$. If we write $\Gamma=\Gamma_0$ and $\Gamma'=\Gamma'_0$, then for all $i\geq 0$, we identified \[
\Lambda_i=\Gamma_i/\Gamma_{i+1}=\Gamma_i/(\Gamma_i\cap \Gamma'_{i+1})\cong \Gamma_i\Gamma'_{i+1}/\Gamma'_{i+1} \subseteq \Gamma'_i/\Gamma'_{i+1}=\Lambda'_i.
\]
The isomorphism $\chi_i:\Gamma_i/\Gamma_{i+1}\to \Gamma_i\Gamma'_{i+1}/\Gamma'_{i+1}$ is given by $\chi_i(\gamma_i\Gamma_{i+1})=\gamma_i\Gamma'_{i+1}$.

\begin{lem}\label{lem:Ai'=Ai}
Under the above identifications, the linearisation morphisms of the inclusion $\Gamma\to \Gamma'$ are the inclusions $\Lambda_i\to \Lambda'_i$. If $\{\Lambda_i,\mu_i \}_{i\geq 1}$ and $\{\Lambda'_i,\mu'_i \}_{i\geq 1}$ are the linearisations of $\Gamma$ and $\Gamma'$, then $\mu_i(\lambda)(\lambda_i)=\mu'_i(\lambda)(\lambda_i)$ for all $\lambda\in \Lambda_0$ and $\lambda_i\in \Lambda_i$, for all $i\geq 1$.
\end{lem}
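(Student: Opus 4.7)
The plan is that both assertions reduce to unpacking the definitions of the linearisation of a group morphism and of the identifications $\chi_i:\Lambda_i\hookrightarrow \Lambda'_i$ spelled out just before the statement, together with one application of Lemma \ref{lem:phi-Ai}.

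For the first assertion, I would apply the general construction of the linearisation (section \ref{subsec:solvmanifolds}) to the inclusion $\iota:\Gamma\to\Gamma'$. By definition, $\iota$ induces maps $\iota_i:\Gamma_i/\Gamma_{i+1}\to \Gamma'_i/\Gamma'_{i+1}$ given by $\gamma_i\Gamma_{i+1}\mapsto \iota(\gamma_i)\Gamma'_{i+1}=\gamma_i\Gamma'_{i+1}$. Composing with the identification $\chi_i^{-1}$ on the source shows that $\iota_i$ is precisely the map $\gamma_i\Gamma_{i+1}\mapsto \gamma_i\Gamma'_{i+1}$, whose image is $\Gamma_i\Gamma'_{i+1}/\Gamma'_{i+1}\subseteq \Lambda'_i$. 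That is exactly the identification of $\Lambda_i$ with a subgroup of $\Lambda'_i$, so $\iota_i$ is the inclusion after the identification.

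For the second assertion, the cleanest route is to feed $\varphi=\iota$ into Lemma \ref{lem:phi-Ai}. That lemma yields $\iota_i\circ \mu_i(\lambda)=\mu'_i(\iota_0(\lambda))\circ \iota_i$ for all $\lambda\in \Lambda_0$ and $i\geq 1$. By the first assertion, both $\iota_0$ and $\iota_i$ are inclusions, so dropping them under the identifications gives $\mu_i(\lambda)(\lambda_i)=\mu'_i(\lambda)(\lambda_i)$ for every $\lambda_i\in \Lambda_i$. As a direct sanity check, writing $\lambda=\gamma\Gamma_1$ and $\lambda_i=\gamma_i\Gamma_{i+1}$ with $\gamma\in\Gamma$, $\gamma_i\in\Gamma_i$, both sides compute to the coset of $\gamma\gamma_i\gamma^{-1}$, read modulo $\Gamma_{i+1}$ on the left and modulo $\Gamma'_{i+1}$ on the right; since $\Gamma_{i+1}=\Gamma\cap\Gamma'_{i+1}$, these cosets match under $\chi_i$.

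There is no real obstacle here; the work is purely bookkeeping of identifications, and in particular one only needs to note that a representative $\gamma\in\Gamma$ for $\lambda\in\Lambda_0$ is simultaneously a representative for the image of $\lambda$ in $\Lambda'_0$, so the conjugations performed by $\mu_i(\lambda)$ and $\mu'_i(\lambda)$ are literally conjugation by the same element of $G$.
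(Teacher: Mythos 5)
Your proof is correct and follows essentially the same route as the paper: unwinding the definition of $\iota_i$ against the identification $\chi_i$ for the first claim, and applying Lemma \ref{lem:phi-Ai} to the inclusion $\iota:\Gamma\to\Gamma'$ for the second. The extra sanity check via explicit cosets is fine but not needed.
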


\begin{proof}
For all $i\geq 0$, the map $\Lambda_i\to \Lambda'_i$ induced by the inclusion $\iota:\Gamma\to \Gamma'$ is \[
\iota_i:\Gamma_i/\Gamma_{i+1}\to \Gamma'_i/\Gamma'_{i+1}: \gamma_i\Gamma_{i+1} \mapsto \iota(\gamma_i)\Gamma'_{i+1}=\gamma_i\Gamma'_{i+1}.
\]
Writing an element of $\Gamma_i\Gamma'_{i+1}/\Gamma'_{i+1}$ as $\gamma_i\Gamma'_{i+1}$ with $\gamma_i\in \Gamma_i$, we get \[
\iota_i\circ \chi_i^{-1}: \Gamma_i\Gamma'_{i+1}/\Gamma'_{i+1}\to \Gamma_i/\Gamma_{i+1}\to \Gamma'_i/\Gamma'_{i+1}: \gamma_i\Gamma'_{i+1}\mapsto \gamma_i\Gamma_{i+1}\mapsto \gamma_i\Gamma'_{i+1},
\]
which is indeed the inclusion $\Gamma_i\Gamma'_{i+1}/\Gamma'_{i+1}\to \Gamma'_i/\Gamma'_{i+1}$. The second statement now follows by applying Lemma \ref{lem:phi-Ai} to the inclusion $\Gamma\to \Gamma'$.
\end{proof}

Fix an identification $\Lambda'_i\cong \ZZ^{k_i}$ for all $i\geq 0$. Then $\Lambda_i$ corresponds to a subgroup $B_i\ZZ^{k_i}$ for some matrix $B_i\in \ZZ^{k_i\times k_i}$ with $\left|\det(B_i)\right|=[\Lambda'_i:\Lambda_i]$. Under these identifications, let $\mu'_i:\ZZ^{k_0}\to \GL_{k_i}(\ZZ)$ and $\mu_i:B_0\ZZ^{k_0}\to \GL(B_i\ZZ^{k_i})$ be the linearisation morphisms of $\Gamma'$ and $\Gamma$. If, for $v\in \ZZ^{k_0}$, the linear map $\mu'_i(v):\ZZ^{k_i}\to \ZZ^{k_i}$ is given by multiplication with a matrix $M_i(v)\in \ZZ^{k_i\times k_i}$, then by Lemma \ref{lem:Ai'=Ai}, for $v\in B_0\ZZ^{k_0}$, the linear map $\mu_i(v):B_i\ZZ^{k_i}\to B_i\ZZ^{k_i}$ is given by multiplication with the same matrix $M_i(v)$. Since $\Gamma'$ is net, each of the matrices $M_i(v)$ is net, i.e.\ the multiplicative subgroup of $\CC^*$ generated by the eigenvalues of $M_i(v)$ contains no non-trivial roots of unity.

Let $\{\varphi_i:\Lambda_i\to \Lambda'_i \}_{i\geq 0}$ be the linearisation of the morphism $\varphi:\Gamma\to \Gamma'$ induced by $f:\Gamma\orb G\to \Gamma'\orb G$. Under the above identifications, $\varphi_i$ corresponds to a linear map $B_i\ZZ^{k_i}\to \ZZ^{k_i}$, which is given by multiplication with a matrix $F_i\in \QQ^{k_i\times k_i}$. 

\begin{df}
We will call $\{F_i\}_{i=0}^c$ the set of \emph{linearisation matrices} of $\varphi$ (or the linearisation matrices of $f$, where we keep in mind that they depend on a choice of lift for $f$ used to compute the morphism $\varphi$).
\end{df}

By Lemma \ref{lem:phi-Ai}, these matrices satisfy \[
F_iM_i(v)=M_i(F_0v)F_i.
\]
for all $v\in B_0\ZZ^{k_0}$ and $i\geq 1$.

Finally, consider the projection $q:\Gamma\orb G\to \Gamma'\orb G:\Gamma g\mapsto \Gamma'g$. As a lift for $q$, we can take the identity map $G\to G$, whose induced morphism $\psi:\Gamma\to \Gamma'$ is the subgroup inclusion. By Lemma \ref{lem:Ai'=Ai}, its linearisation consists of the inclusion morphisms $\Lambda_i\to \Lambda'_i$; the corresponding linear maps $B_i\ZZ^{k_i}\to \ZZ^{k_i}$ are given by multiplication with the identity matrix $I\in \ZZ^{k_i\times k_i}$.

We can now use the following technical result from \cite{iris}.

\begin{lem}[{\cite[Lemma 5.7]{iris}}]\label{lem:iris}
If $X\in \CC^{n\times n}$ and $\Phi\in \QQ^{m\times m}$ are matrices, $A:\ZZ^m\to \GL_n(\ZZ)$ is a morphism, and there is a $d\in \ZZ_{>0}$ such that \begin{itemize}
\item[\rm(a)] $\Phi(d\ZZ^m)\subseteq \ZZ^m$;
\item[\rm(b)] $XA(dv)=A(\Phi (dv))X$ for all $v\in \ZZ^m$;
\item[\rm(c)] $\Phi$ has no eigenvalue $1$;
\item[\rm(d)] $A(v)$ is net for all $v\in \ZZ^m$;
\end{itemize}
then $\det(I-A(v)X)$ is independent of $v\in \ZZ^m$.
\end{lem}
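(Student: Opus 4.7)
The plan is to exploit the commutativity of $\{A(v)\}$ to simultaneously decompose $\CC^n$ into generalised eigenspaces, and then to combine (b) with (c) and (d) to show that the only contribution to $\det(I - A(v)X)$ comes from a $v$-independent piece.

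First I would decompose $\CC^n = \bigoplus_\chi V_\chi$ into simultaneous generalised eigenspaces for the commuting family $\{A(v)\}_{v\in\ZZ^m}$, indexed by characters $\chi:\ZZ^m\to \CC^*$, with $A(v)|_{V_\chi} = \chi(v)I + N_\chi(v)$ and $N_\chi(v)$ nilpotent. Conjugating by a suitable base-change matrix $U \in \GL_n(\CC)$ preserves both $\det(I-A(v)X)$ and the relation (b), so I assume $A$ already has this form and write $X = (X_{\chi,\chi'})$ blockwise. Substituting into (b) for $w\in d\ZZ^m$ gives the Sylvester-type identity
\[
(\chi'(w) - \chi(\Phi w))\,X_{\chi,\chi'} \;=\; N_\chi(\Phi w)\,X_{\chi,\chi'} - X_{\chi,\chi'}\,N_{\chi'}(w),
\]
and iterating it using the nilpotency of the $N$'s yields that $X_{\chi,\chi'} \ne 0$ only when $\chi'(w) = \chi(\Phi w)$ for every $w\in d\ZZ^m$.

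The next step is to use (c) and (d) to restrict which characters survive. For a diagonal block $\chi = \chi'$ this gives $\chi\circ(\Phi - I)\equiv 1$ on $d\ZZ^m$; by (c), $\Phi - I$ is invertible over $\QQ$, so $(\Phi - I)(d\ZZ^m)$ has finite index in $\ZZ^m$, and hence $\chi$ has finite image. Netness (d) then forces $\chi\equiv 1$. For off-diagonal blocks, the restriction $\chi'|_{d\ZZ^m} = (\chi\circ\Phi)|_{d\ZZ^m}$ organises the non-zero entries of $X$ into a directed graph on the characters appearing; running the same finite-index argument along any closed cycle of length $k$ (which yields $\chi\circ(\Phi^k - I)\equiv 1$ on $d\ZZ^m$) and invoking netness forces every character along the cycle to be trivial. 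After reordering blocks, $X$ becomes block-upper-triangular with the trivial-character piece $V_0$ as its only nonzero diagonal block, so
\[
\det(I-A(v)X) \;=\; \det\bigl(I - (I + N(v))\,X_{0,0}\bigr).
\]
On $V_0$ the intertwining reads $X_{0,0}\,N(w) = N(\Phi w)\,X_{0,0}$ for $w\in d\ZZ^m$, and a direct computation exploiting the commuting nilpotent family $\{N(w)\}$ reduces the right-hand side to $\det(I - X_{0,0})$, which is manifestly $v$-independent.

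The main obstacle will be the cycle step: condition (c) only excludes $\Phi$ having eigenvalue $1$, not that $\Phi^k$ lacks eigenvalue $1$ for $k \geq 2$, so $(\Phi^k - I)(d\ZZ^m)$ may fail to have finite index in $\ZZ^m$ for some $k$. Handling this cleanly requires extracting from (d) that any character sitting in a $\Phi$-periodic orbit must in fact be trivial, most likely by applying netness to the product character $\prod_{i=0}^{k-1}\chi\circ\Phi^i$ (which does satisfy a finite-index condition thanks to $\det(\Phi^k - I) \ne 0$ being replaced by its analogue on the product) rather than to the individual $\chi$'s.
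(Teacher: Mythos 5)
This lemma is not proved in the paper at all: it is imported verbatim from \cite[Lemma 5.7]{iris}, with only the remark that the proof there never uses integrality of $X$. So your argument has to stand on its own, and it does not: it breaks at exactly the point you flag as the main obstacle, and the proposed repair does not close the gap. Your product-character computation is correct as far as it goes: for a cycle $\chi_1,\dots,\chi_k$ in the support graph of $X$ one gets $\chi_{i+1}=\chi_i\circ\Phi$ on a suitable sublattice, hence $\theta:=\chi_1\cdots\chi_k$ satisfies $\theta\circ(\Phi-I)\equiv 1$ on a finite-index subgroup (because $(I+\Phi+\cdots+\Phi^{k-1})(\Phi-I)=\Phi^k-I$), so $\theta$ has finite image, and netness forces $\theta\equiv 1$. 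But this only makes the \emph{product} of the characters around the cycle trivial, not each individual character, and the stronger claim is false.

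Concretely: take $m=1$, $d=1$, $\Phi=(-1)$ (no eigenvalue $1$), and $A(v)=M^v$ with $M=\begin{pmatrix}4&-1\\1&0\end{pmatrix}$, whose eigenvalues $\lambda^{\pm1}$ with $\lambda=2+\sqrt3$ generate a net group. In the eigenbasis of $M$, condition (b), i.e.\ $XM=M^{-1}X$, forces $X=\begin{pmatrix}0&b\\c&0\end{pmatrix}$: a $2$-cycle between the two \emph{nontrivial} characters $v\mapsto\lambda^{\pm v}$. Here $V_0=0$, $X$ cannot be made block-upper-triangular with only the trivial-character block on the diagonal, and $\det(I-A(v)X)=1-bc$, whereas your final formula would return $1$ (the determinant over the empty space $V_0$). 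The actual cancellation happens \emph{around} the cycles: the scalar factor $\prod_i\chi_i(v)=1$ kills the $v$-dependence of each cyclic contribution, and one must additionally push the nilpotent parts around the cycle using the intertwining $N_\chi(\Phi w)X_{\chi,\chi'}=X_{\chi,\chi'}N_{\chi'}(w)$. Your last step is also understated for the same reason: on $V_0$ that intertwining only yields $\det(I-A(v)X_{0,0})=\det(I-A(\Phi v)X_{0,0})$ (via $\det(I-PQ)=\det(I-QP)$), which is a functional equation, not constancy, so the claimed ``direct computation'' still requires a genuine argument.
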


\begin{rmk}
In the statement of \cite[Lemma 5.7]{iris}, the matrix $X$ has integer entries, but the proof only assumes $X\in \CC^{n\times n}$.
\end{rmk}

To apply this to our setting, with $F_i\in \CC^{k_i\times k_i}$ for $X$, $F_0\in \QQ^{k_0\times k_0}$ for $\Phi$ and $\mu'_i:\ZZ^{k_0}\to \GL_{k_i}(\ZZ):v\mapsto M_i(v)$ for $A$, take $d\in \ZZ_{>0}$ such that $d\ZZ^{k_0}\subseteq B_0\ZZ^{k_0}$. Then certainly $F_0(d\ZZ^{k_0})\subseteq\ZZ^{k_0}$ and $F_iM_i(v)=M_i(F_0v)F_i$ for all $v\in d\ZZ^{k_0}$. Thus, if $F_0$ has no eigenvalue $1$, then $\det(I-M_i(v)F_i)$ is independent of $v\in \ZZ^{k_0}$. 

\medskip

To apply Theorem \ref{thm:N-sol}, we need to identify both $\Lambda_i$ and $\Lambda'_i$ with $\ZZ^{k_i}$. We can keep the above identification for $\Lambda'_i$, and for $\Lambda_i\cong B_i\ZZ^{k_i}$ consider the isomorphism \[
\ZZ^{k_i}\to B_i\ZZ^{k_i}:v\mapsto B_iv.
\]
Under this isomorphism, the morphisms $\varphi_i:\ZZ^{k_i}\to \ZZ^{k_i}$ are given by multiplication with $F_iB_i$, and the morphisms $\psi_i:\ZZ^{k_i}\to \ZZ^{k_i}$ are given by multiplication with $B_i$.

Note that $\det(B_0-F_0B_0)\neq 0$ if and only if $F_0$ has no eigenvalue $1$, and if $\det(I-M_i(v)F_i)$ is independent of $v\in \ZZ^{k_0}$, then so is $\det(B_i-M_i(v)F_iB_i)$. Thus, it follows from Theorem \ref{thm:N-sol} that \[
N(f,q)=\prod_{i=0}^c \left|\det(B_i-F_iB_i)\right|=\prod_{i=0}^c \left|\det(I-F_i)\right|\left|\det(B_i)\right|.
\]
Note that we have \[
\prod_{i=0}^c\left|\det(B_i)\right|=\prod_{i=0}^c[\Lambda'_i:\Lambda_i]=[\Gamma':\Gamma].
\]
Indeed, for the second equality, we can write \[
[\Gamma':\Gamma]=[\Gamma'/\Gamma'_1:\Gamma\Gamma'_1/\Gamma'_1][\Gamma'_1:\Gamma\cap \Gamma'_1]=[\Lambda'_0:\Lambda_0][\Gamma'_1:\Gamma_1]
\]
and proceed inductively to find $[\Gamma':\Gamma]=\prod_{i=0}^c[\Lambda'_i:\Lambda_i]$.

Together, we get:

\begin{thm}\label{thm:N(f,q)}
Suppose $f:\Gamma\orb G\to \Gamma'\orb G$ is a map between special solvmanifolds, where $\Gamma'$ is net and $\Gamma$ is a finite index subgroup of $\Gamma'$. 
Let $\{F_i\in \QQ^{k_i\times k_i}\}_{i=0}^c$ be the set of linearisation matrices of $f$.
The Nielsen coincidence number of $f$ with the natural projection $q:\Gamma\orb G\to \Gamma'\orb G$ is given by
\begin{equation}\label{eq:coin-sol}
N(f,q)=[\Gamma':\Gamma]\prod_{i=0}^c \left|\det(I-F_i)\right|.
\end{equation}
\end{thm}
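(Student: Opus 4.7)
The plan is essentially to assemble all the pieces already developed in the preceding discussion and apply Theorem~\ref{thm:N-sol} to the specific pair $(f,q)$. The setup before the theorem statement has already fixed the identifications $\Lambda'_i\cong \ZZ^{k_i}$, $\Lambda_i\cong B_i\ZZ^{k_i}$ via $v\mapsto B_iv$, expressed $\varphi_i$ as multiplication by $F_iB_i$ and $\psi_i$ (the linearisation of the inclusion $\iota:\Gamma\to \Gamma'$, which lifts $q$ via the identity) as multiplication by $B_i$, and derived the intertwining relation $F_iM_i(v)=M_i(F_0v)F_i$ on $d\ZZ^{k_0}$ for some $d>0$. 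So the proof is a matter of checking hypotheses and doing one short calculation.

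First I would verify the two hypotheses of Theorem~\ref{thm:N-sol}. Condition~(1) follows from Lemmas~\ref{lem:gamma-fi} and~\ref{lem:gamma-i-fi}: since $\Gamma'$ is net and $\Gamma$ has finite index in $\Gamma'$, the subgroup $\Gamma_i$ has finite index in $\Gamma'_i$, so $\Lambda_i$ and $\Lambda'_i$ have equal rank for every $i\geq 0$. For condition~(2), either $F_0$ has eigenvalue $1$, in which case $\det(B_0-F_0B_0)=0=\det(\psi_0-\varphi_0)$ and the formula we want reduces to $0=0$; or $F_0$ has no eigenvalue $1$, in which case Lemma~\ref{lem:iris} applied with $X=F_i$, $\Phi=F_0$ and $A=\mu'_i$ (using that $\mu'_i(v)$ is net because $\Gamma'$ is, and the intertwining relation holds on $d\ZZ^{k_0}$) shows that $\det(I-M_i(v)F_i)$ is independent of $v\in \ZZ^{k_0}$; multiplying through by $\det(B_i)$ then gives that $\det(B_i-M_i(v)F_iB_i)=\det(\psi_i-\mu'_i(v)\varphi_i)$ is independent of $v$, as required.

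Next I would apply Theorem~\ref{thm:N-sol} to obtain
\begin{equation*}
N(f,q)=\prod_{i=0}^c \left|\det(\psi_i-\varphi_i)\right|=\prod_{i=0}^c \left|\det(B_i-F_iB_i)\right|=\prod_{i=0}^c \left|\det(I-F_i)\right|\cdot \left|\det(B_i)\right|,
\end{equation*}
where I factor $B_i$ out on the right in each determinant. By the choice of $B_i$ we have $|\det(B_i)|=[\Lambda'_i:\Lambda_i]$.

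Finally I would show $\prod_{i=0}^c[\Lambda'_i:\Lambda_i]=[\Gamma':\Gamma]$ by an inductive application of the multiplicative tower identity
\begin{equation*}
[\Gamma'_i:\Gamma_i]=[\Gamma'_i/\Gamma'_{i+1}:\Gamma_i\Gamma'_{i+1}/\Gamma'_{i+1}]\cdot [\Gamma'_{i+1}:\Gamma_i\cap \Gamma'_{i+1}]=[\Lambda'_i:\Lambda_i]\cdot [\Gamma'_{i+1}:\Gamma_{i+1}],
\end{equation*}
using $\Gamma_{i+1}=\Gamma_i\cap \Gamma'_{i+1}$ from the proofs of Lemmas~\ref{lem:gamma-fi} and~\ref{lem:gamma-i-fi}, together with $\Gamma_{c+1}=\Gamma'_{c+1}=1$. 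Combining gives the desired formula. The only genuinely non-trivial step is the verification of condition~(2), which is exactly where netness of $\Gamma'$ is needed (via Lemma~\ref{lem:iris}); everything else is bookkeeping with the identifications set up before the theorem statement.
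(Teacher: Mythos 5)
Your proposal is correct and follows essentially the same route as the paper, which proves Theorem \ref{thm:N(f,q)} precisely by the chain of observations preceding its statement: verifying conditions (1) and (2) of Theorem \ref{thm:N-sol} via Lemmas \ref{lem:gamma-fi}, \ref{lem:gamma-i-fi} and \ref{lem:iris}, factoring $\det(B_i-F_iB_i)=\det(I-F_i)\det(B_i)$, and computing $\prod_i[\Lambda'_i:\Lambda_i]=[\Gamma':\Gamma]$ inductively. No substantive differences.
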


\begin{ex}We consider exactly the same solvmanifolds and maps as in Example~\ref{ex:first-formula}. By our choice of $A$, the group $\Gamma'$ is indeed net and $q: \Gamma \orb G \to 
\Gamma' \orb G$ is the natural projection, so the conditions of Theorem~\ref{thm:N(f,q)} are satisfied.

We had that $\Lambda_0 = 2 \ZZ$ and $\Lambda_0'= \ZZ$, so $B_0 =2$. The map $F_0$ is given by 
\[ F_0: 2 \ZZ \to \ZZ: 2t \mapsto -t\]
so as a $1 \times 1$--matrix $F_0=\begin{bmatrix} -\frac{1}{2} \end{bmatrix}$. For $F_1$, it holds that $\Lambda_1 = \Lambda_1'=\ZZ^2$ (so $B_1$ is the identity matrix) and $F_1:\ZZ^2 \to \ZZ^2 $ is the zero map. 
According to the theorem above, we have that 
\[ N(f,q) = [\Gamma' : \Gamma ]\cdot \left|\det(I -F_0) \right| \cdot \left|\det(I - F_1)\right|= 2 \cdot| 1-(-\tfrac{1}{2})|\cdot 1 = 3\]
which coincides with our earlier computation in Example~\ref{ex:first-formula}.
\end{ex}

\section{Nielsen numbers of $n$-valued maps on infra-solvmanifolds}

\subsection{Infra-solvmanifolds}

Many equivalent ways to define an infra-solvmanifold have appeared in the literature; for an overview, see \cite{kurokiyu}. We will use the definition taken e.g.\ in \cite{leelee2009}: let $G$ be a connected simply connected solvable Lie group, and let $\Aff(G)=G\rtimes \Aut(G)$ be the \emph{affine group} of $G$, which acts on $G$ as \[
\forall (a,A)\in \Aff(G),\,g\in G: (a,A)g=aA(g).
\] 
An infra-solvmanifold is a quotient $\pi\orb G$ where $\pi$ is a torsion-free subgroup of $\Aff(G)$ which is a finite extension of a lattice in $G$.

\begin{rmk}
From this definition of an infra-solvmanifold, it does not immediately follow that every solvmanifold is an infra-solvmanifold. This is, however, the case: in \cite{kurokiyu} it is shown that a (compact) infra-solvmanifold can equivalently be defined as a quotient $\Delta\orb G$ where $G$ is a connected simply connected solvable Lie group, and $\Delta$ is a torsion-free subgroup of $\Aff(G)$ acting cocompactly on $G$ such that $\text{hol}(\Delta)$, the projection of $\Delta$ in $\Aut(G)$, has compact closure in $\Aut(G)$ (this is Definition 5 in \cite{kurokiyu}). In particular, this is satisfied when $\Delta$ is a closed cocompact subgroup of $G$, since $G$ is torsion-free and $\text{hol}(\Delta)$ is trivial.
\end{rmk}

If $\Gamma$ is a lattice in $G$, the coset space $\Gamma\orb G$ is a special solvmanifold. Thus, by definition, every infra-solvmanifold admits a finite regular cover by a special solvmanifold. It follows from \cite[Theorem 4.8 \& Remark 4.9]{iris} that this cover can always be taken so that $\Gamma$ is net. On the other hand:

\begin{lem}\label{lem:ism-cover}
Let $\pi\orb G$ be an infra-solvmanifold, and $\Gamma\orb G$ a special solvmanifold that is a finite regular covering space of $\pi\orb G$. Then $\pi$ admits an $(f,\Gamma)$-invariant subgroup (see Theorem \ref{thm:RT}) for any $n$-valued map $f:\pi\orb G\to D_n(\pi\orb G)$.
\end{lem}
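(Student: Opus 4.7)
My plan is to construct $S$ as the normal core in $\pi$ of an explicit finite index subgroup of $\pi$. To set up, I would first fix a lift $(\tilde f_1,\ldots,\tilde f_n):G\to F_n(G,\pi)$ of $f$; by the theory recalled from \cite{charlotte1}, this yields the morphism $\sigma:\pi\to\Sigma_n$, the stabilisers $S_j=\{\alpha\in\pi\mid\sigma_\alpha(j)=j\}$, and the restricted morphisms $\varphi_j:S_j\to\pi$.

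The next step is to observe that every subgroup of $\pi$ that needs to appear already has finite index. Each $S_j$ contains $\ker\sigma$, which has finite index since $\Sigma_n$ is finite; $\Gamma$ has finite index in $\pi$ by hypothesis; and for each $j$ the preimage $\varphi_j^{-1}(\Gamma)$ has index at most $[\pi:\Gamma]$ in $S_j$. Consequently the subgroup
\[
S'=\Gamma\cap \bigcap_{j=1}^n S_j\cap \bigcap_{j=1}^n \varphi_j^{-1}(\Gamma)
\]
is a finite intersection of finite index subgroups of $\pi$, hence has finite index in $\pi$. By construction $S'\subseteq\Gamma$, $S'\subseteq S_j$ for all $j$, and $\varphi_j(S')\subseteq\Gamma$ for all $j$.

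Finally I would let $S$ be the normal core of $S'$ in $\pi$, namely $S=\bigcap_{\alpha\in\pi}\alpha S'\alpha^{-1}$. A standard argument (the action of $\pi$ on $\pi/S'$ gives a homomorphism $\pi\to\Sigma_{[\pi:S']}$ with kernel $S$) shows that the normal core of a subgroup of index $m$ has index at most $m!$, so $S$ is a finite index normal subgroup of $\pi$. Since $S\subseteq S'$, the three inclusions $S\subseteq\Gamma$, $S\subseteq S_j$ and $\varphi_j(S)\subseteq\Gamma$ are inherited from $S'$, so $S$ is $(f,\Gamma)$-invariant. I do not foresee a substantive obstacle: the only mildly delicate point is that each $\varphi_j$ is defined only on $S_j$, which forces us to intersect with $\bigcap_j S_j$ before applying any $\varphi_j^{-1}$, but this is handled automatically by the order of operations above.
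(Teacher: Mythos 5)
Your proof is correct, but it follows a different route from the paper's. You build $S$ as the normal core of the finite--index subgroup $S'=\Gamma\cap\bigcap_j S_j\cap\bigcap_j\varphi_j^{-1}(\Gamma)$; each ingredient has finite index (each $S_j$ contains $\ker\sigma$, and $\varphi_j^{-1}(\Gamma)$ has index at most $[\pi:\Gamma]$ in $S_j$), so Poincar\'e's lemma and the normal-core bound give a finite index normal subgroup, and all three required inclusions pass from $S'$ to $S$. This argument is purely formal group theory and would work for \emph{any} group $\pi$. The paper instead takes $S=\langle\alpha^{[\pi:\Gamma]}\mid\alpha\in\ker\sigma\rangle$: since $\Gamma$ is normal of index $[\pi:\Gamma]$ and $\ker\sigma$ is normal, this $S$ is automatically normal, contained in $\Gamma$ and in $\ker\sigma\subseteq S_j$, and satisfies $\varphi_j(S)\subseteq\Gamma$; the only nontrivial point is finiteness of the index, which is where the geometry enters --- the quotient $\pi/S$ is periodic, and periodic virtually polycyclic groups are finite. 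So the paper's construction trades your elementary index-counting for a structural fact about $\pi$, and in return produces a more canonical subgroup (generated by a conjugation-invariant set of powers rather than obtained by coring an ad hoc intersection); since Lemma \ref{lem:lin-invar} makes the final formula independent of the choice of $(f,\Gamma)$-invariant subgroup, either construction serves equally well for the main theorem.
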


\begin{proof}
As we saw in Remark \ref{rmk:polycyclic}, if $\Gamma\orb G$ is a special solvmanifold, the group $\Gamma$ is always a polycyclic group, so $\pi$ is virtually polycyclic. Given an $n$-valued map $f:\pi\orb G\to D_n(\pi\orb G)$, define \[
S=\langle \alpha^{[\pi:\Gamma]} \mid \alpha\in \ker\sigma \rangle,
\]
where $\sigma:\pi\to \Sigma_n$ is the morphism induced by $f$.
By construction, $S$ is a normal subgroup of $\pi$ contained in $\Gamma$ and in all groups $S_j$, and $\varphi_j(S)\subseteq \Gamma$ for each $j$.
Also, the quotient $\pi/S$ is periodic: if $r=[\pi:\ker\sigma]$, then $\alpha^{r[\pi:\Gamma]}\in S$ for any $\alpha\in \pi$. Since periodic virtually polycyclic groups are automatically finite, it follows that $\pi/S$ is finite, i.e.\ $S$ is a finite index subgroup of $\pi$.
\end{proof}

Special solvmanifolds are always orientable (see \cite{HLP2011}). Thus, we can apply Theorem \ref{thm:RT} (with $M=\pi\orb G$, $\bar{M}=\Gamma\orb G$ and $\hat{M}=S\orb G$) to express Nielsen numbers of $n$-valued maps on infra-solvmanifolds in terms of Nielsen coincidence numbers of pairs $\bar{f}_j,q:S\orb G\to \Gamma\orb G$, where $\Gamma\orb G$ is a special solvmanifold, $\Gamma$ is net, $S$ is a finite index subgroup of $\Gamma$, and $q$ is the natural projection: the setting of Theorem \ref{thm:N(f,q)}.

To express the Nielsen number of an $n$-valued map $f$ of an infra-solvmanifold in a closed formula that can be computed directly from $f$, we need some more notations.

Let $\pi\orb G$ be an infra-solvmanifold, with $\Gamma\lhd_f \pi$ a lattice in $G$ (we use $\lhd_f$ to denote a finite index normal subgroup). Since $\Gamma\orb G$ is a special solvmanifold, the group $\Gamma$ is a strongly torsion-free $S$-group. If $\Gamma_1=\sqrt[\Gamma]{[\Gamma,\Gamma]}$ and $\Gamma_i=\sqrt[\Gamma_1]{\gamma_i(\Gamma_1)}$ are the associated groups from the previous sections, we get a series \[
\pi \rhd \Gamma=\Gamma_0 \rhd \Gamma_1 \rhd \ldots \rhd \Gamma_c \rhd \Gamma_{c+1}=1
\]
where $\pi/\Gamma_0$ is finite and $\Lambda_i=\Gamma_i/\Gamma_{i+1}$ is free abelian for all $i\geq 0$. In particular, the group $\pi$ is a torsion-free virtually polycyclic group.\footnote{In fact, as shown in \cite{baues2004}, a group $\pi$ can be realised as the fundamental group of an infra-solvmanifold \emph{if and only if} it is a torsion-free virtually polycyclic group, and (like for solvmanifolds) this group completely determines the infra-solvmanifold.}

For every $i\geq 0$, we have a well-defined morphism $A_i:\pi\to \Aut(\Lambda_i)$ given by \[
A_i(\alpha):\Lambda_i\to \Lambda_i:\gamma_i\Gamma_{i+1}\mapsto \alpha \gamma_i\alpha^{-1}\Gamma_{i+1}.
\]
We will call the collection $\{\Lambda_i,A_i \}_{i\geq 0}$ the \emph{linearisation of $\pi$ corresponding to $\Gamma$}. Note that, if $\{\Lambda_i,\mu_i \}_{i\geq 1}$ is the linearisation of $\Gamma$ as defined in section \ref{subsec:solvmanifolds}, then $\mu_i(\gamma\Gamma_1)=A_i(\gamma)$ for all $\gamma\in \Gamma$ and $i\geq 1$.

If $\pi$ and $\pi'$ are groups with strongly torsion-free $S$-groups $\Gamma\lhd_f \pi$ and $\Gamma'\lhd_f \pi'$, and $\varphi:\pi\to \pi'$ is a morphism such that $\varphi(\Gamma)\subseteq \Gamma'$, then we call the linearisation of the restriction $\varphi':\Gamma\to \Gamma'$ also the \emph{linearisation of $\varphi$ corresponding to $(\Gamma,\Gamma')$}. For later reference, we observe:

\begin{lem}\label{lem:fix-phii-phi}
Suppose $\pi\subseteq \pi'$ are torsion-free groups with strongly torsion-free $S$-groups $\Gamma\lhd_f \pi$ and $\Gamma'\lhd_f \pi'$, and $\varphi:\pi\to \pi'$ is a morphism such that $\varphi(\Gamma)\subseteq \Gamma'$, with linearisation $\{\varphi_i \}_{i=0}^c$. If $\fix(\varphi_i)=1$ for all $i\in \{0,\ldots,c\}$, then $\fix(\varphi)=1$.
\end{lem}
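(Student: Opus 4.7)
The plan is to combine a torsion-freeness reduction from $\pi$ down to $\Gamma$ with a descending induction along the isolated lower central filtration of $\Gamma$.

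First, I would reduce the problem to showing that any $\beta\in \Gamma$ with $\varphi(\beta)=\beta$ must be trivial. Since $\Gamma\lhd_f \pi$, any $\alpha\in \pi$ admits a positive power $\alpha^m\in \Gamma$; if $\varphi(\alpha)=\alpha$, then $\beta:=\alpha^m$ also satisfies $\varphi(\beta)=\beta$, and $\varphi(\Gamma)\subseteq \Gamma'$ forces $\beta\in \Gamma\cap \Gamma'$. Once $\beta=1$ is known, torsion-freeness of $\pi$ upgrades this to $\alpha=1$.

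Next, I would exploit the filtration $\Gamma=\Gamma_0\rhd \Gamma_1\rhd\cdots\rhd \Gamma_{c+1}=1$, together with the fact that $\varphi$ carries $\Gamma_i$ into $\Gamma'_i$ for each $i$. By descending induction on $i$, I claim that a $\varphi$-fixed element $\beta\in \Gamma_i$ must lie in $\Gamma_{i+1}$. Writing $\lambda=\beta\Gamma_{i+1}\in \Lambda_i$, one computes $\varphi_i(\lambda)=\varphi(\beta)\Gamma'_{i+1}=\beta\Gamma'_{i+1}\in \Lambda'_i$. Under the natural comparison map $\Lambda_i\to \Lambda'_i$ given by $\gamma\Gamma_{i+1}\mapsto \gamma\Gamma'_{i+1}$ (the same identification used in Lemma \ref{lem:Ai'=Ai}), this identity exhibits $\lambda$ as a fixed point of $\varphi_i$, so the hypothesis $\fix(\varphi_i)=1$ yields $\lambda=1$, i.e.\ $\beta\in \Gamma_{i+1}$. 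Iterating from $i=0$ to $i=c$ lands $\beta$ in $\Gamma_{c+1}=1$, completing the proof.

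The main obstacle will be making the comparison map $\Lambda_i\to \Lambda'_i$ well-defined and matching the interpretation of $\fix(\varphi_i)=1$ used here to the convention used elsewhere in the paper. In particular, for $\gamma\Gamma_{i+1}\mapsto \gamma\Gamma'_{i+1}$ to land in $\Lambda'_i$ one needs $\Gamma_i\subseteq \Gamma'_i$, which is not given outright but can be arranged by replacing $\Gamma$ with a suitable finite-index refinement inside $\Gamma\cap \Gamma'$ and invoking Lemmas \ref{lem:gamma-fi} and \ref{lem:gamma-i-fi} to compare filtrations level by level. Once these identifications are in place, the descending induction is mechanical; no further subtlety is expected.
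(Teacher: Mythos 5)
Your argument is essentially identical to the paper's proof: reduce to the restriction $\varphi|_\Gamma$ by raising a fixed element to the power $[\pi:\Gamma]$ and using torsion-freeness of $\pi$, then push a fixed $\beta\in\Gamma_i$ into $\Gamma_{i+1}$ via $\fix(\varphi_i)=1$ and iterate down to $\Gamma_{c+1}=1$. The worry in your last paragraph is unnecessary: the statement $\fix(\varphi_i)=1$ only parses once $\Lambda_i$ is identified with a subgroup of $\Lambda'_i$ as in Lemma \ref{lem:Ai'=Ai}, which presupposes $\Gamma\subseteq\Gamma'$, and then $\Gamma_i\subseteq\Gamma'_i$ follows directly from $\varphi(\Gamma_i)\subseteq\Gamma'_i$ applied to the inclusion morphism (no netness or finite-index refinement is needed --- and replacing $\Gamma$ by a smaller subgroup would in any case change the morphisms $\varphi_i$ appearing in the hypothesis).
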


\begin{proof}
Let $\varphi':\Gamma\to \Gamma'$ be the restriction of $\varphi$. It suffices to prove that $\fix(\varphi')=1$: indeed, if this is the case, suppose $\alpha\in \fix(\varphi)$. Then $\alpha^{[\pi:\Gamma]}\in\fix(\varphi')$, so $\alpha^{[\pi:\Gamma]}=1$. Since $\pi$ is torsion-free, it follows that $\alpha=1$.

Suppose $\alpha\in \fix(\varphi')$. We show that $\alpha\in \Gamma_i$ by induction on $i$, with $\Gamma_0=\Gamma$. Suppose $\alpha\in \Gamma_i$ for some $i\geq 0$. Then $\alpha\Gamma_{i+1}\in \fix(\varphi_i)$. Since $\fix(\varphi_i)=1$, it follows that $\alpha\in \Gamma_{i+1}$. By induction, we eventually get $\alpha\in \Gamma_{c+1}=1$.
\end{proof}

For an $n$-valued map $f:\pi\orb G\to D_n(\pi\orb G)$ of an infra-solvmanifold $\pi\orb G$, let $\Gamma\orb G$ be a special solvmanifold that is a finite regular covering space of $\pi\orb G$, with $\Gamma$ net, and let $S$ be an $(f,\Gamma)$-invariant subgroup of $\pi$ (which exists by Lemma \ref{lem:ism-cover}). 
Let $\varphi_j:S_j\to \pi$ be the morphisms induced by (some lift of) $f$. For each $j$, we can consider the linearisation $\{\varphi_{i,j}\}_{i=0}^c$ of $\varphi_j$ corresponding to the subgroups $S\lhd_f S_j$ and $\Gamma\lhd_f \pi$. We will call the total collection $\{\varphi_{i,j} \}_{i=0;j=1}^{c;n}$ the \emph{linearisation of $f$ corresponding to $(S,\Gamma)$}.

We can again choose identifications $\Lambda_i\cong \ZZ^{k_i}$ for all $i$. Then we can consider the matrices $F_{i,j}\in \QQ^{k_i\times k_i}$ representing $\varphi_{i,j}$ as in the previous section.

\begin{df}
We will call $\{F_{i,j} \}_{i=0;j=1}^{c;n}$ the \emph{set of linearisation matrices of $f$ corresponding to $(S,\Gamma)$}. (Again, we keep in mind that these depend on the lift of $f$ chosen to compute the morphisms $\varphi_j$.)
\end{df}

We will also view $A_i$ as morphisms $\pi\to \GL_{k_i}(\ZZ)$, and still call $\{\ZZ^{k_i},A_i \}_{i=0}^c$ the linearisation of $\pi$ corresponding to $\Gamma$.

\subsection{Main result}

We can now express Nielsen numbers of $n$-valued maps on infra-solvmanifolds in terms of their linearisations:

\begin{thm}\label{thm:main}
Let $f:\pi\orb G\to D_n(\pi\orb G)$ be an $n$-valued map of an infra-solvmanifold $\pi\orb G$, finitely covered by a special solvmanifold $\Gamma\orb G$ whose fundamental group $\Gamma$ is net. Then \begin{equation}\label{eq:mainthm}
N(f)=\frac{1}{[\pi:\Gamma]} \sum_{j=1}^n \sum_{\bar{\alpha}\in \pi/\Gamma} \prod_{i=0}^c \left|\det(I-A_i(\alpha)F_{i,j})\right|,
\end{equation}
where $\{\ZZ^{k_i},A_i\}_{i=0}^c$ is the linearisation of $\pi$ corresponding to $\Gamma$, and $\{F_{i,j} \}_{i=0;j=1}^{c;n}$ is the set of linearisation matrices of $f$ corresponding to $(S,\Gamma)$, where $S$ is any $(f,\Gamma)$-invariant subgroup of $\pi$.
\end{thm}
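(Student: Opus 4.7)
The strategy is to combine Theorem \ref{thm:RT} with the coincidence formula from Theorem \ref{thm:N(f,q)}. By Lemma \ref{lem:ism-cover} we may pick an $(f,\Gamma)$-invariant subgroup $S\subseteq \pi$, so that with $\bar M=\Gamma\orb G$ and $\hat M=S\orb G$ the hypotheses of Theorem \ref{thm:RT} are in force (using that special solvmanifolds are orientable). This supplies maps $\bar f_j:\hat M\to \bar M$ together with the estimate
\[
N(f)\geq \frac{1}{[\pi:S]}\sum_{j=1}^{n}\sum_{\bar\alpha\in \pi/\Gamma}N(\bar\alpha\bar f_j,q),
\]
which becomes an equality whenever $\fix(\mu(\alpha)\varphi_j)=1$ for every $(\alpha,j)$ with $N(\bar\alpha\bar f_j,q)\neq 0$.

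Next, each pair $(\bar\alpha\bar f_j,q)$ consists of maps between special solvmanifolds $S\orb G\to \Gamma\orb G$ with $\Gamma$ net and $S$ of finite index in $\Gamma$, which is exactly the setting of Theorem \ref{thm:N(f,q)}. The key identification is that the linearisation matrices of $\bar\alpha\bar f_j$ associated to the lift $\alpha\tilde f_j$ are precisely $A_i(\alpha)F_{i,j}$: at level $i$ the morphism $\mu(\alpha)\varphi_j|_S:S\to \Gamma$ factors as $(\varphi_j)_i$ followed by the conjugation action of $\alpha$ on $\Lambda_i^\Gamma$, and the latter is represented by $A_i(\alpha)$ in the chosen basis. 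Theorem \ref{thm:N(f,q)} then yields
\[
N(\bar\alpha\bar f_j,q)=[\Gamma:S]\prod_{i=0}^{c}\left|\det(I-A_i(\alpha)F_{i,j})\right|.
\]
Substituting and using $[\pi:S]=[\pi:\Gamma][\Gamma:S]$ produces the right-hand side of \eqref{eq:mainthm} as a lower bound for $N(f)$.

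For equality, suppose $N(\bar\alpha\bar f_j,q)\neq 0$. Then every $\det(I-A_i(\alpha)F_{i,j})\neq 0$, so each $I-A_i(\alpha)F_{i,j}$ is invertible over $\QQ$, which forces $\fix((\mu(\alpha)\varphi_j)_i)=1$ at every level $i$. Lemma \ref{lem:fix-phii-phi} applied to $\mu(\alpha)\varphi_j|_S:S\to \pi$ (taking $S$ itself as the strongly torsion-free $S$-subgroup of the source and $\Gamma$ as that of $\pi$) then yields $\fix(\mu(\alpha)\varphi_j|_S)=1$. Any $\gamma\in S_j$ fixed by $\mu(\alpha)\varphi_j$ has $\gamma^{[S_j:S]}\in S$ (since $S\lhd \pi$, hence $S\lhd_f S_j$) and is still fixed, so $\gamma^{[S_j:S]}=1$; torsion-freeness of $\pi$ then forces $\gamma=1$, so $\fix(\mu(\alpha)\varphi_j)=1$ as required.

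The main bookkeeping difficulty I anticipate is verifying that the linearisation matrices of $\bar\alpha\bar f_j$ really are $A_i(\alpha)F_{i,j}$ under the specific matrix conventions set up in the proof of Theorem \ref{thm:N(f,q)}, and that the summand $\prod_i|\det(I-A_i(\alpha)F_{i,j})|$ depends only on the coset $\bar\alpha\in\pi/\Gamma$ and on $f$ (not on the chosen representative of $\bar\alpha$ nor on the lift of $f$ used to define the $F_{i,j}$). Both points should reduce to Remark \ref{rmk:N-sol-indep} together with the cocycle-type relation $F_{i,j}M_i(v)=M_i(F_{0,j}v)F_{i,j}$ supplied by Lemma \ref{lem:phi-Ai}.
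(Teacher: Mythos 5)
Your proposal is correct and follows essentially the same route as the paper: apply Theorem \ref{thm:RT} with the $(f,\Gamma)$-invariant subgroup $S$ from Lemma \ref{lem:ism-cover}, identify the linearisation matrices of $\bar{\alpha}\bar{f}_j$ as $A_i(\alpha)F_{i,j}$, invoke Theorem \ref{thm:N(f,q)} to evaluate each $N(\bar{\alpha}\bar{f}_j,q)=[\Gamma:S]\prod_i\left|\det(I-A_i(\alpha)F_{i,j})\right|$, and deduce the equality condition $\fix(\mu(\alpha)\varphi_j)=1$ from the nonvanishing of the determinants via Lemma \ref{lem:fix-phii-phi}. The only cosmetic difference is that you apply Lemma \ref{lem:fix-phii-phi} to the restriction to $S$ and then lift to $S_j$ by the power-of-the-index argument by hand, whereas the paper applies the lemma directly to $\mu(\alpha)\varphi_j:S_j\to\pi$ (the same reduction being built into that lemma's proof).
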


\begin{proof}
We will apply Theorem \ref{thm:RT}. Let $\bar{f}_1,\ldots,\bar{f}_n:S\orb G\to \Gamma\orb G$ be the maps from that theorem, and $q:S\orb G\to \Gamma\orb G$ the projection. In \cite{RT}, it is observed that the morphism induced by $\bar{\alpha}\bar{f}_j$ is $\mu(\alpha)\varphi'_j:S\to \Gamma$, for some representative $\alpha\in \pi$ for $\bar{\alpha}$ that depends on the chosen lift for $\bar{\alpha}\bar{f}_j$. Note that the linearisation of $\mu(\alpha):\Gamma\to \Gamma$ is $\{A_i(\alpha) \}_{i=0}^c$, so the set of linearisation matrices of $\bar{\alpha}\bar{f}_j$ is $\{A_i(\alpha)F_{i,j}\}_{i=0}^c$. It follows from Theorem \ref{thm:N(f,q)} that \begin{equation}\label{eq:N(afj,q)}
N(\bar{\alpha}\bar{f}_j,q)=[\Gamma:S]\prod_{i=0}^c \left|\det(I-A_i(\alpha)F_{i,j})\right|.
\end{equation}
It remains to be shown that if this expression is non-zero, then $\fix(\mu(\alpha)\varphi_j)=1$.

Suppose the expression is non-zero. Then $\det(I-A_i(\alpha)F_{i,j})\neq 0$ for all $i$. It follows that $\fix(A_i(\alpha)\varphi_{i,j})=1$ for all $i$ (a non-trivial fixed point of $A_i(\alpha)\varphi_{i,j}$ corresponds to a non-trivial solution $v\in \ZZ^{k_i}$ to $(I-A_i(\alpha)F_{i,j})v=0$). By Lemma \ref{lem:fix-phii-phi}, this implies that $\fix(\mu(\alpha)\varphi_j)=1$ as well.
\end{proof}

\subsection{Remarks}

First, we observe that the right hand side of \eqref{eq:mainthm} depends on some choices, and should be independent of these choices for the formula to make sense.

\begin{itemize}
\item By Remarks \ref{rmk:det-indep} and \ref{rmk:N-sol-indep}, the formula \eqref{eq:N(afj,q)} (and therefore also the final formula) is independent of the choice of identifications $\Lambda_i\cong \ZZ^{k_i}$ and of the choice of lift for $\bar{\alpha}\bar{f}_j$, so of the choice of representative $\alpha$ for $\bar{\alpha}$.

\item The morphisms $\varphi_j$ induced by $f$ (and therefore the linearisation of $f$) depend on the chosen lift $\tilde{f}$. The same argument as in \cite[p.\,16]{RT} shows that the formula is independent of this choice of lift.

\item To show that the formula does not depend on the chosen $(f,\Gamma)$-invariant subgroup $S$, it suffices to show the following lemma. Indeed, if $S_1$ and $S_2$ are two $(f,\Gamma)$-invariant subgroups, then so is $S_0=S_1\cap S_2$, and the lemma implies that both the linearisations corresponding to $S_1$ and $S_2$ are equal to the one corresponding to $S_0$.
\end{itemize}

\begin{lem}\label{lem:lin-invar}
If $S'\subseteq S$ are $(f,\Gamma)$-invariant subgroups, the linearisation morphisms of $f$ corresponding to $(S,\Gamma)$ are given by the same matrices as those corresponding to $(S',\Gamma)$.
\end{lem}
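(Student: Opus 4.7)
The plan is to use the functoriality of linearisation under composition of morphisms, together with Lemma \ref{lem:Ai'=Ai} and the uniqueness of the $\QQ$-linear extension of a linear map defined on a finite-index sublattice of $\ZZ^{k_i}$.

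First, I would observe that since $S'\subseteq S$, the restrictions of the induced morphism $\varphi_j:S_j\to \pi$ satisfy $\varphi_j|_{S'}=(\varphi_j|_S)\circ \iota$, where $\iota:S'\hookrightarrow S$ is the inclusion. The operation that sends a morphism of strongly torsion-free $S$-groups to its linearisation is functorial with respect to composition, so the linearisation morphisms satisfy $\varphi_{i,j}^{S'}=\varphi_{i,j}^S\circ \iota_i$, where $\iota_i:\Lambda_i(S')\to\Lambda_i(S)$ is induced by $\iota$. Applying Lemma \ref{lem:Ai'=Ai} to the inclusion $S'\subseteq S$ identifies $\iota_i$ with the honest subgroup inclusion $\Lambda_i(S')\hookrightarrow \Lambda_i(S)$.

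Next, I would unpack the definitions of the matrices. Fix the identification $\Lambda_i(\Gamma)\cong\ZZ^{k_i}$ used to define the $F_{i,j}$. By Lemma \ref{lem:Ai'=Ai} applied separately to $S\subseteq \Gamma$ and $S'\subseteq \Gamma$, the groups $\Lambda_i(S)$ and $\Lambda_i(S')$ are identified with finite-index sublattices $B_i^S\ZZ^{k_i}$ and $B_i^{S'}\ZZ^{k_i}$ of $\ZZ^{k_i}$ respectively, and by the previous paragraph the chain of inclusions gives
\[
B_i^{S'}\ZZ^{k_i}\subseteq B_i^{S}\ZZ^{k_i}\subseteq \ZZ^{k_i},
\]
with $\varphi_{i,j}^{S'}$ equal to the restriction of $\varphi_{i,j}^{S}$ to $B_i^{S'}\ZZ^{k_i}$. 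By definition, $F_{i,j}^S\in \QQ^{k_i\times k_i}$ is the unique matrix whose multiplication action on $\QQ^{k_i}$ extends $\varphi_{i,j}^S$, uniqueness holding because $B_i^S\ZZ^{k_i}$ spans $\QQ^{k_i}$ over $\QQ$; the analogous statement defines $F_{i,j}^{S'}$.

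Finally, since $F_{i,j}^S$ already extends $\varphi_{i,j}^S$ and the latter restricts to $\varphi_{i,j}^{S'}$ on $B_i^{S'}\ZZ^{k_i}$, the matrix $F_{i,j}^S$ is also a $\QQ$-linear extension of $\varphi_{i,j}^{S'}$; by the uniqueness just noted, $F_{i,j}^{S'}=F_{i,j}^S$. This is to be done for every $i$ and $j$, which yields the lemma. The proof is essentially bookkeeping, but I expect the only mildly delicate point is verifying that the two applications of Lemma \ref{lem:Ai'=Ai} (to $S'\subseteq S$ and to $S\subseteq \Gamma$) compose compatibly, so that the sublattices $B_i^S\ZZ^{k_i}$ and $B_i^{S'}\ZZ^{k_i}$ are genuinely nested inside $\ZZ^{k_i}$ in the way just described; this nesting is what makes the uniqueness argument go through without any further computation.
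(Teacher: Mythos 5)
Your proposal is correct and follows essentially the same route as the paper: identify the quotients $S'_i/S'_{i+1}\subseteq S_i/S_{i+1}\subseteq \Gamma_i/\Gamma_{i+1}\cong\ZZ^{k_i}$ as nested finite-index sublattices via Lemmas \ref{lem:gamma-fi} and \ref{lem:gamma-i-fi}, observe that the linearisation morphisms for $(S',\Gamma)$ are the restrictions of those for $(S,\Gamma)$, and conclude that restrictions to a finite-index sublattice are given by the same rational matrix. Your explicit appeal to the uniqueness of the $\QQ$-linear extension is just a slightly more spelled-out version of the paper's final sentence.
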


\begin{proof}
Write $S_0=S$, $S_1=\sqrt[S]{[S,S]}$ and $S_i=\sqrt[S]{\gamma_i(S)}$ for $i\geq 1$. By Lemmas \ref{lem:gamma-fi} and \ref{lem:gamma-i-fi}, for each $i\geq 0$, the quotient $S_i/S_{i+1}$ can be identified with a finite index subgroup of $\Gamma_i/\Gamma_{i+1}\cong \ZZ^{k_i}$, say $B_i\ZZ^{k_i}$. In turn, for $S'$, each quotient $S'_i/S'_{i+1}$ can be identified with a finite index subgroup of $S_i/S_{i+1}\cong B_i\ZZ^{k_i}$, say $B'_iB_i\ZZ^{k_i}$.

The linearisation morphisms of $f$ corresponding to $(S,\Gamma)$ can be viewed as morphisms $B_i\ZZ^{k_i}\to \ZZ^{k_i}$, and those corresponding to $(S',\Gamma)$ are their restrictions $B'_iB_i\ZZ^{k_i}\to \ZZ^{k_i}$, which are given by the same matrices.
\end{proof}

Next, we argue that the known formulas for the Nielsen number of $n$-valued maps on infra-nilmanifolds \cite{RT} and for the Nielsen number of single-valued maps on infra-solvmanifolds \cite{iris} follow from our result.

\subsubsection*{Special case 1: infra-nilmanifolds}

An infra-nilmanifold is essentially a quotient $\pi\orb N$, where $N$ is a connected simply connected nilpotent Lie group, and $\pi$ is a torsion-free subgroup of $\Aff(N)$ which is a finite extension of a lattice $\Gamma$ in $N$. The result of \cite{RT} is:

\begin{thm}[{\cite[Theorem 7.3]{RT}}]
Let $f:\pi\orb N\to D_n(\pi\orb N)$ be an $n$-valued map on an infra-nilmanifold $\pi\orb N$, finitely covered by a nilmanifold $\Gamma\orb N$. Let $S$ be an $(f,\Gamma)$-invariant subgroup of $\pi$, and $\varphi'_j:S\to \Gamma$ the morphisms induced by $f$. Then \[
N(f)=\frac{1}{[\pi:\Gamma]}\sum_{j=1}^{n}\sum_{\bar{\alpha}\in \pi/\Gamma} \,|\!\det(I-(\mu(\alpha)\varphi'_j)_*)|
\]
(where the star denotes the Lie algebra morphism $\n\to\n$ induced by the unique extension $N\to N$, and $I:\n\to\n$ is the identity Lie algebra morphism).
\end{thm}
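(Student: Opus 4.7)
The plan is to derive this theorem as a direct corollary of Theorem \ref{thm:main}. Every connected simply connected nilpotent Lie group $N$ is in particular solvable, so every infra-nilmanifold $\pi\orb N$ is an infra-solvmanifold and every nilmanifold $\Gamma\orb N$ is a special solvmanifold. Thus Theorem \ref{thm:main} will apply as soon as we verify the netness hypothesis on $\Gamma$, and then the work reduces to identifying the inner product of determinants in \eqref{eq:mainthm} with $\left|\det(I - (\mu(\alpha)\varphi'_j)_*)\right|$.

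The first step is to check that any lattice $\Gamma$ in a connected simply connected nilpotent Lie group $N$ is net. Because $\Gamma$ is nilpotent, the conjugation action of $\Gamma$ on each $\Lambda_i = \Gamma_i/\Gamma_{i+1}$ is trivial (already noted in Remark \ref{rmk:orientable}), so each linearisation morphism $\mu_i \colon \Lambda_0 \to \GL_{k_i}(\ZZ)$ is identically the identity matrix. The multiplicative subgroup of $\CC^*$ generated by $\{1\}$ is trivial and in particular contains no non-trivial root of unity, so netness in the sense of Definition \ref{def:net} holds.

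Next, I would analyse the inner product. Write $\psi = \mu(\alpha)\varphi'_j \colon S \to \Gamma$. Since linearisation is functorial and the linearisation of the inner automorphism $\mu(\alpha)$ consists of the matrices $\{A_i(\alpha)\}_{i=0}^c$, the linearisation matrices of $\psi$ are the products $\{A_i(\alpha) F_{i,j}\}_{i=0}^c$. The morphism $\psi$ extends uniquely to a Lie group morphism $\tilde\psi \colon N \to N$; since $\tilde\psi(N_i) \subseteq N_i$ for every $i$ (where $N_i$ is the Mal'cev completion of $\Gamma_i$), the derivative $\psi_* \colon \n \to \n$ preserves the Lie algebra filtration $\n = \n_0 \supseteq \n_1 \supseteq \cdots \supseteq \n_{c+1} = 0$. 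The induced $\RR$-linear map on each quotient $\n_i/\n_{i+1} \cong \Lambda_i \otimes_\ZZ \RR \cong \RR^{k_i}$ coincides, under these identifications, with multiplication by $A_i(\alpha) F_{i,j}$.

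Choosing a basis of $\n$ adapted to this filtration, $\psi_*$ becomes block triangular with diagonal blocks $A_i(\alpha) F_{i,j}$, whence
\[
\det(I - \psi_*) = \prod_{i=0}^c \det(I - A_i(\alpha) F_{i,j}).
\]
Taking absolute values and substituting back into the formula from Theorem \ref{thm:main} yields the claimed expression for $N(f)$. The main subtlety is the identification of $A_i(\alpha) F_{i,j}$ with the induced map on $\n_i/\n_{i+1}$: this is essentially the Lie algebra version of the content of Lemma \ref{lem:lin-f-nil}, and is the standard ingredient underlying Keppelmann and McCord's formula \eqref{eq:KeppelmannMcCord} in the nilpotent case.
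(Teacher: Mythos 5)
Your overall route is the same as the paper's: specialise Theorem \ref{thm:main} to the nilpotent case and then identify the inner product $\prod_{i=0}^c\left|\det(I-A_i(\alpha)F_{i,j})\right|$ with $\left|\det(I-(\mu(\alpha)\varphi'_j)_*)\right|$. The paper delegates that last identification to a cited result of Deconinck--Dekimpe (\cite[Theorem 5.4]{charlotte2}), whereas you prove it directly by observing that $\psi_*$ is block triangular with respect to the filtration $\n\supseteq\n_1\supseteq\cdots$ with diagonal blocks $A_i(\alpha)F_{i,j}$; that argument is correct and is indeed the content of the cited lemma.

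There is, however, one false intermediate claim in your netness verification. You assert that because $\Gamma$ is nilpotent, the conjugation action of $\Lambda_0$ on each $\Lambda_i$ is trivial, so that each $\mu_i(v)$ is the identity matrix, and you cite Remark \ref{rmk:orientable}. That remark concerns the isolated lower central series $\sqrt[\Gamma]{\gamma_i(\Gamma)}$ of a nilpotent group, which is \emph{not} the filtration entering Definition \ref{def:net}: there one uses $\Gamma_1=\sqrt[\Gamma]{[\Gamma,\Gamma]}$ and $\Gamma_i=\sqrt[\Gamma_1]{\gamma_i(\Gamma_1)}$, and for this filtration the action of $\Lambda_0$ on $\Lambda_i$ need not be trivial. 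For instance, if $\Gamma$ is a lattice in the free nilpotent Lie group of class $3$ on two generators, then $\Gamma_1=\sqrt[\Gamma]{[\Gamma,\Gamma]}$ is abelian of rank $3$, so $\Gamma_2=1$ and $\Lambda_1=\Gamma_1$, and conjugation by a generator of $\Gamma$ acts on $\Lambda_1$ by a non-identity unipotent matrix. The conclusion you want is still true: since $N$ is nilpotent, $\Aut(N)\ni\mu(\gamma)$ has unipotent differential $\mathrm{Ad}(\gamma)$, so every $\mu_i(v)$ is unipotent, all its eigenvalues equal $1$, and the multiplicative group they generate is trivial; hence $\Gamma$ is net. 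So the gap is local and easily repaired (replace ``trivial'' by ``unipotent''), but as written the justification does not hold.
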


To view this as a special case of Theorem \ref{thm:main}, note that in this setting, the $i=0$ term of the product in \eqref{eq:mainthm} is trivial. The remaining product \[
\prod_{i=1}^c \left|\det(I-A_i(\alpha)F_{i,j})\right|
\]
is equal to $|\!\det(I-(\mu(\alpha)\varphi'_j)_*)|$ by the following result, which follows from \cite[Theorem 5.4]{charlotte2}:

\begin{lem}
Let $\Gamma'$ be a finitely generated torsion-free nilpotent group and $\Gamma$ a finite index subgroup. For each $i\geq 1$, choose an identification $\Gamma'_i/\Gamma'_{i+1}\cong \ZZ^{k_i}$, and let $B_i\ZZ^{k_i}$ be the finite index subgroup corresponding to $\Gamma_i/\Gamma_{i+1}$. Let $N$ be the Mal'cev completion of $\Gamma'$, with Lie algebra $\n$.

For a morphism $\varphi:\Gamma\to \Gamma'$, let $\varphi_*:\n\to\n$ be the Lie algebra morphism induced by $\varphi$, and let $F_i\in \QQ^{k_i\times k_i}$ be the matrices such that the induced morphisms $\varphi_i:B_i\ZZ^{k_i}\to\ZZ^{k_i}$ are given by multiplication with $F_i$. Then
\[
\det (I-\varphi_*) = \prod_{i=1}^{c}\det (I-F_i).
\]
\end{lem}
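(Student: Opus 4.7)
The strategy is to exploit that $\varphi_*$ preserves the lower central series filtration of $\n$, so $\det(I-\varphi_*)$ factors as a product over graded pieces, each of which should match up with $\det(I-F_i)$.

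First I would set up the filtration on the Lie algebra side. Let $N_i=\gamma_i(N)$ and $\n_i=\gamma_i(\n)$, and let $\tilde\varphi\colon N\to N$ be the unique Lie group extension of $\varphi$ (which exists since $\Gamma$, being finite index in $\Gamma'$, shares its Mal'cev completion $N$). Then $\tilde\varphi(N_i)\subseteq N_i$ and consequently $\varphi_*(\n_i)\subseteq\n_i$. Since $N$ is a simply connected nilpotent Lie group, each $N_i/N_{i+1}$ is a connected simply connected abelian Lie group, and the exponential yields a canonical isomorphism of Lie groups $\n_i/\n_{i+1}\xrightarrow{\sim} N_i/N_{i+1}$. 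Picking a basis of $\n$ compatible with the flag $\n=\n_1\supseteq\n_2\supseteq\cdots\supseteq\n_{c+1}=0$ makes $\varphi_*$ block upper triangular with diagonal blocks $(\varphi_*)_i\colon\n_i/\n_{i+1}\to\n_i/\n_{i+1}$, so
\[
\det(I-\varphi_*)=\prod_{i=1}^{c}\det\bigl(I-(\varphi_*)_i\bigr).
\]

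Second, I would identify $(\varphi_*)_i$ with the matrix $F_i$. By Mal'cev theory, $\Gamma'_i/\Gamma'_{i+1}$ is a lattice in $N_i/N_{i+1}$; transporting along the exponential, the chosen isomorphism $\Gamma'_i/\Gamma'_{i+1}\cong\ZZ^{k_i}$ extends uniquely to an $\RR$-linear isomorphism $\n_i/\n_{i+1}\cong\RR^{k_i}$, under which $\Gamma_i/\Gamma_{i+1}$ corresponds to $B_i\ZZ^{k_i}$. Naturality of the Mal'cev completion forces $(\varphi_*)_i$ to restrict on $B_i\ZZ^{k_i}$ to $\varphi_i$, which is multiplication by $F_i$. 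Since $(\varphi_*)_i$ is $\RR$-linear and $B_i\ZZ^{k_i}$ spans $\RR^{k_i}$ (recall $|\det B_i|=[\Lambda'_i:\Lambda_i]\neq0$), the map $(\varphi_*)_i$ is multiplication by $F_i$ on all of $\RR^{k_i}$. Hence $\det(I-(\varphi_*)_i)=\det(I-F_i)$, and combining with the block-triangular factorisation above gives the claimed identity.

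The only real obstacle is the bookkeeping in the second step: one must verify that the exponential identification $\n_i/\n_{i+1}\cong N_i/N_{i+1}$ is compatible with the chosen identifications $\Gamma'_i/\Gamma'_{i+1}\cong\ZZ^{k_i}$ used to define $F_i$, so that ``multiplication by $F_i$ on the lattice'' and ``multiplication by $F_i$ on $\RR^{k_i}$'' really refer to the same matrix. This compatibility is exactly the content of Theorem 5.4 of \cite{charlotte2}, so quoting that result reduces the argument to the block-triangular observation above.
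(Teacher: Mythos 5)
Your argument is correct, but be aware that the paper gives no proof of this lemma at all: it is stated as a consequence of \cite[Theorem 5.4]{charlotte2} and left there. What you have written is essentially the standard argument that underlies that cited result, carried out directly: $\varphi_*$ preserves the flag $\n=\gamma_1(\n)\supseteq\gamma_2(\n)\supseteq\cdots\supseteq\gamma_{c+1}(\n)=0$, so $\det(I-\varphi_*)$ factors over the graded pieces, and each graded piece $(\varphi_*)_i$ is identified with $F_i$ because (i) $\exp$ identifies $\n_i/\n_{i+1}$ with $N_i/N_{i+1}$ as abelian Lie groups (the Baker--Campbell--Hausdorff correction terms for $X,Y\in\n_i$ lie in $\n_{i+1}$), (ii) naturality of $\exp$ matches the map induced by $\tilde\varphi$ on $N_i/N_{i+1}$ with $(\varphi_*)_i$, and (iii) the lattice $\Lambda_i\cong B_i\ZZ^{k_i}$ spans $\n_i/\n_{i+1}$ over $\RR$, so the $\RR$-linear map $(\varphi_*)_i$ is pinned down by its restriction $\varphi_i$ to that lattice. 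Your closing paragraph is more cautious than it needs to be: once you build the adapted basis of $\n$ by lifting the chosen $\ZZ$-bases of the lattices $\Lambda'_i$ (these are $\RR$-bases of the graded pieces since each $\Lambda'_i$ is a lattice in $N_i/N_{i+1}$), there is no residual compatibility left to check, and the appeal to \cite[Theorem 5.4]{charlotte2} becomes redundant --- your two steps already form a complete, self-contained proof. What your route buys is exactly that self-containedness; what the paper's one-line citation buys is brevity, at the cost of sending the reader elsewhere for the computation you have reconstructed.
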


\subsubsection*{Special case 2: single-valued maps}

The result of \cite{iris} is:

\begin{thm}[{\cite[Theorem 5.2]{iris}}]
Let $f:\pi\orb G\to \pi\orb G$ be a self-map of an infra-solvmanifold $\pi\orb G$, finitely covered by a special solvmanifold $\Gamma\orb G$ whose fundamental group $\Gamma$ is net. Let $S\lhd_f\pi$ be a fully invariant subgroup contained in $\Gamma$. Then \begin{equation}\label{eq:iris}
N(f)=\frac{1}{[\pi:\Gamma]} \sum_{\bar{\alpha}\in \pi/\Gamma} \prod_{i=0}^c \left|\det(I-A_i(\alpha)F_i)\right|,
\end{equation}
where $\{\ZZ^{k_i},A_i\}_{i=0}^c$ and $\{F_i \}_{i=0}^{c}$ are the respective linearisations of $\pi$ and $f$ corresponding to $S$.
\end{thm}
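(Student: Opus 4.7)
The plan is to invoke Theorem \ref{thm:RT} to reduce $N(f)$ to a weighted sum of Nielsen coincidence numbers $N(\bar\alpha \bar f_j, q)$ on special solvmanifolds, and then compute each such number via Theorem \ref{thm:N(f,q)}. Since $\Gamma\orb G$ is a special solvmanifold (hence orientable) with $\Gamma$ net, and since an $(f,\Gamma)$-invariant subgroup $S$ exists by Lemma \ref{lem:ism-cover}, the hypotheses of Theorem \ref{thm:RT} are met. This yields
\[
N(f) \;\geq\; \frac{1}{[\pi:S]} \sum_{j=1}^n \sum_{\bar\alpha \in \pi/\Gamma} N(\bar\alpha \bar f_j, q),
\]
with equality provided $\fix(\mu(\alpha)\varphi_j) = 1$ whenever the corresponding coincidence number is non-zero.

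Next I would identify the linearisation of each $\bar\alpha \bar f_j:S\orb G \to \Gamma\orb G$. As recalled in \cite{RT}, for a suitable choice of lift some representative $\alpha \in \pi$ of $\bar\alpha$ gives induced morphism $\mu(\alpha)\varphi'_j:S\to\Gamma$, where $\varphi'_j$ is the restriction of $\varphi_j$. Because $\mu(\alpha)$ acts on each $\Lambda_i$ via the matrix $A_i(\alpha)$, the linearisation matrices of $\bar\alpha \bar f_j$ are simply $\{A_i(\alpha) F_{i,j}\}_{i=0}^c$. Theorem \ref{thm:N(f,q)} then delivers
\[
N(\bar\alpha \bar f_j, q) \;=\; [\Gamma:S] \prod_{i=0}^c \bigl|\det(I - A_i(\alpha) F_{i,j})\bigr|.
\]
Substituting this into the inequality above and using $[\Gamma:S]/[\pi:S] = 1/[\pi:\Gamma]$ produces the right-hand side of \eqref{eq:mainthm} as a lower bound for $N(f)$.

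The step I expect to require the most care is verifying the equality condition of Theorem \ref{thm:RT}. Suppose $N(\bar\alpha \bar f_j, q) \neq 0$; then every factor $|\det(I - A_i(\alpha) F_{i,j})|$ is non-zero, so each integer linear map $I - A_i(\alpha) F_{i,j}$ is injective on $\ZZ^{k_i}$. Equivalently, $\fix(A_i(\alpha)\varphi_{i,j}) = 1$ for all $i$. Applying Lemma \ref{lem:fix-phii-phi} to the morphism $\mu(\alpha)\varphi_j : S_j \to \pi$ (with $S \lhd_f S_j$ and $\Gamma \lhd_f \pi$) then promotes this pointwise triviality of fixed sets on the graded quotients to $\fix(\mu(\alpha)\varphi_j) = 1$ on the whole group, which is exactly what the equality case of Theorem \ref{thm:RT} demands. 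The inequality therefore becomes an equality, giving \eqref{eq:mainthm}.

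The principal conceptual hurdle is bookkeeping: confirming that the linearisation of $\mu(\alpha)\varphi'_j$ is literally the matrix product $A_i(\alpha) F_{i,j}$ in each degree (rather than being twisted in some subtler way by the non-trivial holonomy of $\pi/\Gamma$), and that the final formula is genuinely well-defined. Independence from the choice of identifications $\Lambda_i \cong \ZZ^{k_i}$ and from the choice of lift used to define the $\varphi_j$ follows from Remarks \ref{rmk:det-indep} and \ref{rmk:N-sol-indep}; independence from the chosen $(f,\Gamma)$-invariant subgroup $S$ follows from Lemma \ref{lem:lin-invar} (applied to $S' = S_1 \cap S_2$ when comparing two such subgroups). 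These checks, though routine, are precisely what makes the statement meaningful and should be recorded after the main computation.
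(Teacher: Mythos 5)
Your proposal does not prove the stated result so much as re-derive Theorem \ref{thm:main} in the case $n=1$: the chain Theorem \ref{thm:RT} $\to$ Theorem \ref{thm:N(f,q)} $\to$ Lemma \ref{lem:fix-phii-phi} is exactly the paper's proof of \eqref{eq:mainthm}, and running it again here is legitimate but redundant. The paper instead treats this statement as a corollary of Theorem \ref{thm:main}, and its entire proof consists of the one step your argument omits: reconciling the two linearisation conventions. The formula you arrive at involves $\{\ZZ^{k_i},A_i\}$, the linearisation of $\pi$ corresponding to $\Gamma$, and $\{F_i\}$, the linearisation matrices of $f$ corresponding to the pair $(S,\Gamma)$, i.e.\ of the morphism $\varphi'\colon S\to\Gamma$. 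The statement, however, is phrased in the convention of \cite{iris}: there the linearisations of $\pi$ and of $f$ are taken \emph{corresponding to $S$}, meaning one linearises the restricted endomorphism $S\to S$ and the conjugation action of $\pi$ on the quotients $S_i/S_{i+1}$. A priori these are different matrices, and asserting that your computation "produces the right-hand side of \eqref{eq:iris}" silently identifies them.

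Closing this gap is short but is the actual content here: as in Lemma \ref{lem:lin-invar}, each $S_i/S_{i+1}$ is identified with a finite index subgroup $B_i\ZZ^{k_i}$ of $\Gamma_i/\Gamma_{i+1}\cong\ZZ^{k_i}$; under this identification the linearisation morphisms of $\pi$ corresponding to $S$ are the restrictions $B_i\ZZ^{k_i}\to B_i\ZZ^{k_i}$ of $A_i(\alpha)$, hence given by the same matrices, and the linearisation morphisms of $f$ corresponding to $S$ (maps $B_i\ZZ^{k_i}\to B_i\ZZ^{k_i}$) and those corresponding to $(S,\Gamma)$ (the same maps viewed as $B_i\ZZ^{k_i}\to\ZZ^{k_i}$) are likewise given by identical matrices. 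You should also record, rather than route through Lemma \ref{lem:ism-cover}, that the given fully invariant $S\lhd_f\pi$ contained in $\Gamma$ is itself $(f,\Gamma)$-invariant (it satisfies $\varphi(S)\subseteq S\subseteq\Gamma$), so Theorem \ref{thm:main} applies with this particular $S$. With these two observations added, your argument is complete; without them it establishes \eqref{eq:mainthm} but not \eqref{eq:iris} as stated.
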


Here, the linearisation of the single-valued map $f$ corresponding to a fully invariant subgroup $S$ means the linearisation of the restriction $S\to S$ of the morphism $\varphi:\pi\to \pi$ induced by $f$.

\begin{rmk}
The reason why linearisations in \cite{iris} are taken with respect to a fully invariant subgroup is that, in that paper, linearisations are only defined for morphisms with the same domain and image. Indeed, for their purposes it only makes sense to define it in that setting, since they study fixed points of $f$ using \emph{fixed points} of lifts to the solvmanifold, rather than coincidences. That way they could use the existing formula \eqref{eq:KeppelmannMcCord} of Keppelmann and McCord, instead of proving the generalisation \eqref{eq:coin-sol} like we did.
\end{rmk}

To show that the formula \eqref{eq:iris} follows from our result, note that in the single-valued case, an $(f,\Gamma)$-invariant subgroup $S$ of $\pi$ is a subgroup $S\lhd_f\pi$ contained in $\Gamma$ such that $\varphi(S)\subseteq \Gamma$, where $\varphi:\pi\to\pi$ is the morphism induced by $f$. Certainly, this is the case if $S\lhd_f\pi$ is a fully invariant subgroup contained in $\Gamma$. Thus, it suffices to show that for such $S$,
\begin{itemize}
\item[(i)] the linearisation morphisms of $\pi$ corresponding to $\Gamma$ are given by the same matrices as those corresponding to $S$,
\item[(ii)] the linearisation morphisms of $f$ corresponding to $(S,\Gamma)$ are given by the same matrices as those corresponding to $S$.
\end{itemize}

\begin{proof}
As in the proof of Lemma \ref{lem:lin-invar}, each of the quotients $S_i/S_{i+1}$ can be identified with a finite index subgroup of $\Gamma_i/\Gamma_{i+1}\cong \ZZ^{k_i}$, say $B_i\ZZ^{k_i}$. Under these identifications, if $A_i:\pi\to \GL_{k_i}(\ZZ)$ are the linearisation morphisms of $\pi$ corresponding to $\Gamma$, those corresponding to $S$ are the morphisms $A'_i:\pi\to \GL(B_i\ZZ^{k_i})$ where $A'_i(\alpha):B_i\ZZ^{k_i}\to B_i\ZZ^{k_i}$ is given by the same matrix as $A_i(\alpha)$.

Similarly, the linearisation morphisms of $f$ corresponding to $S$ can be viewed as morphisms $B_i\ZZ^{k_i}\to B_i\ZZ^{k_i}$, and those corresponding to $(S,\Gamma)$ as the same morphisms viewed as $B_i\ZZ^{k_i}\to \ZZ^{k_i}$, so they are also given by the same matrices.
\end{proof}

\subsection{Example}

Consider the matrix \[
A=\begin{bmatrix}
0 & 1 & 0 & 0 \\
0 & 0 & 1 & 0 \\
0 & 0 & 0 & 1 \\
-1 & 1 & 1 & 1
\end{bmatrix}
\in \GL_4(\ZZ).
\]
This matrix was already used in \cite[p.\ 504, Example 2]{dekimpeleeraymond} and in \cite[Example 4.14]{iris}. The eigenvalues of $A$ are $\alpha$, $\alpha^{-1}$, $\beta$ and $\beta^{-1}$ where $\alpha\in \RR_{>0}\setminus \{1\}$ and $\beta=e^{i\theta}$ is not a root of unity.

Take $P\in \GL_4(\RR)$ such that \[
A=P \begin{bmatrix}
\alpha & 0 & 0 & 0 \\
0 & \alpha^{-1} & 0 & 0 \\
0 & 0 & \cos\theta & \sin\theta \\
0 & 0 & -\sin\theta & \cos\theta
\end{bmatrix} P^{-1}.
\]
Then, for $t\in \RR$, we can define $A^t \in \GL_4(\RR)$ as \[
A^t= P \begin{bmatrix}
\alpha^t & 0 & 0 & 0 \\
0 & \alpha^{-t} & 0 & 0 \\
0 & 0 & \cos(t\theta) & \sin(t\theta) \\
0 & 0 & -\sin(t\theta) & \cos(t\theta)
\end{bmatrix} P^{-1}.
\]

Now define \[
B=\begin{bmatrix}
-I_4 & 0 & 0 \\
0 & A & 0 \\
0 & 0 & A^3
\end{bmatrix}
\in \ZZ^{12\times 12},
\]
where the $0$'s denote $4\times 4$ zero matrices, and $I_4$ is the $4\times 4$ identity matrix. Note that $\det B=1$. For $t\in \RR$, we can define \[
B^t= \begin{bmatrix}
* & 0 & 0 \\
0 & A^t & 0 \\
0 & 0 & A^{3t}
\end{bmatrix}
\]
where \[
*=\begin{bmatrix}
\cos(\pi t) & \sin(\pi t) & 0 & 0 \\
-\sin(\pi t) & \cos(\pi t) & 0 & 0 \\
0 & 0 & \cos(\pi t) & \sin(\pi t) \\
0 & 0 & -\sin(\pi t) & \cos(\pi t)
\end{bmatrix}.
\]
We obtain a solvable Lie group $G=\RR^{12}\rtimes_\psi \RR$ with $\psi:\RR\to \GL_{12}(\RR):t\mapsto B^t$. 

Consider the lattice $\Gamma=\ZZ^{12}\rtimes_\psi \ZZ$ in $G$. We can identify $\Lambda_1=\Gamma_1=\ZZ^{12}\rtimes 0$ and $\Lambda_0=0^{12}\rtimes\ZZ$, and the morphism $\mu_1:\Lambda_0\to \Aut(\Lambda_1)$ is given by \[
\mu_1:0^{12}\rtimes\ZZ\to \GL(\ZZ^{12}\rtimes 0):(\bar{0},z)\mapsto (B^z,0).
\]
Since $-1$ is an eigenvalue of $B$, the group $\Gamma$ is not net. Thus, $\Gamma\orb G$ is a special solvmanifold with non-net fundamental group; in particular, an infra-solvmanifold.

We can define a $3$-valued map $f:\Gamma\orb G\multimap \Gamma\orb G$ by \[
\Gamma (\bar{x},y)\mapsto \{\Gamma(F\bar{x},\tfrac{y}{3}), \Gamma(FB\bar{x},\tfrac{y}{3}+\tfrac{1}{3}), \Gamma(FB^2\bar{x},\tfrac{y}{3}+\tfrac{2}{3})  \}
\]
where \[
F=\begin{bmatrix}
-I_4 & 0 & 0 \\
0 & 0 & 0 \\
0 & I_4 & 0
\end{bmatrix} \in \ZZ^{12\times 12}.
\]
Using the fact that $FB^3=BF$, one easily verifies that $f$ is well-defined with induced morphism \[
\begin{split}
\varphi: \Gamma \to \Gamma^3 \rtimes \Sigma_3:
(\bar{z},0) &\mapsto ((F\bar{z},0),\, (FB\bar{z},0),\, (FB^2\bar{z},0);\, \id ) \\
(\bar{0},1) &\mapsto ((\bar{0},0),\, (\bar{0},0),\, (\bar{0},1);\, (123)^{-1})
\end{split}
\]
(where $(123)\in \Sigma_3$ is the permutation sending $1$ to $2$, $2$ to $3$ and $3$ to $1$). 
We will now compute $N(f)$ using Theorem \ref{thm:main}.

As a finite index net subgroup of $\Gamma$, we can take $\Gamma_0=\ZZ^{12}\rtimes_\psi 2\ZZ$. Indeed, for this group we have $\Lambda_1=0^4\times\ZZ^8\rtimes 0$ and $\Lambda_0=\ZZ^4\times 0^8\rtimes 2\ZZ$, and \[
\mu_1:\ZZ^4\times 0^8\rtimes 2\ZZ \to \GL(0^4\times\ZZ^8\rtimes 0):(\bar{w},2z)\mapsto (B^{2z},0).
\]
All possible eigenvalues of $\mu_1(\bar{w},2z)$ are integer powers of eigenvalues of $B^2$, so integer powers of $1$, $\alpha$ and $\beta$. The group generated by these powers, $\langle\alpha,\beta\rangle$, contains no non-trivial roots of unity: if $\alpha^k\beta^\ell=1$, then $1=|\alpha^k\beta^\ell|=|\alpha|^k$. Since $|\alpha|\neq 1$, it follows that $k=0$. But then $\beta^\ell=1$, which implies $\ell=0$ since $\beta$ is no root of unity.

As $(f,\Gamma_0)$-invariant subgroup, we can take $S=\ZZ^{12}\rtimes_\psi 6\ZZ$, with $\Lambda'_1=0^4\times\ZZ^8\rtimes 0$ and $\Lambda'_0=\ZZ^4\times 0^8\rtimes 6\ZZ$. The linearisation of $f$ corresponding to $(S,\Gamma_0)$ is given by \[
\begin{split}
\varphi_{0i}: \ZZ^4\times 0^8\rtimes 6\ZZ &\to \ZZ^4\times 0^8\rtimes 2\ZZ: \\
\left(\begin{bmatrix} \bar{z} \\ \bar{0} \\ \bar{0} \end{bmatrix},0\right) &\mapsto \left(FB^{i-1}\begin{bmatrix} \bar{z} \\ \bar{0} \\ \bar{0} \end{bmatrix},0\right)=\left(\begin{bmatrix} (-1)^i\bar{z} \\ \bar{0} \\ \bar{0} \end{bmatrix},0\right) \\
(\bar{0},6z) & \mapsto (\bar{0},2z) \\
\varphi_{1i}: 0^4\times \ZZ^8\rtimes 0 &\to 0^4\times \ZZ^8\rtimes 0: \\
\left(\begin{bmatrix} \bar{0} \\ \bar{z}_1 \\ \bar{z}_2 \end{bmatrix},0\right) &\mapsto \left(FB^{i-1}\begin{bmatrix} \bar{0} \\ \bar{z}_1 \\ \bar{z}_2 \end{bmatrix},0\right)=\left(\begin{bmatrix} \bar{0} \\ \bar{0} \\ A^{i-1}\bar{z}_1 \end{bmatrix},0\right)
\end{split}
\]
for $i=1,2,3$. The linearisation of $\Gamma$ corresponding to $\Gamma_0$ is \[
\begin{split}
A_0(\bar{u},w): \ZZ^4\times 0^8\rtimes 2\ZZ &\to \ZZ^4\times 0^8\rtimes 2\ZZ: \\
\left(\begin{bmatrix} \bar{z} \\ \bar{0} \\ \bar{0} \end{bmatrix},v\right) &\mapsto \left(B^w\begin{bmatrix} \bar{z} \\ \bar{0} \\ \bar{0} \end{bmatrix},v\right)=\left(\begin{bmatrix} (-1)^w\bar{z} \\ \bar{0} \\ \bar{0} \end{bmatrix},v\right) \\
A_1(\bar{u},w): 0^4\times \ZZ^8\rtimes 0 &\to 0^4\times \ZZ^8\rtimes 0: \\
\left(\begin{bmatrix} \bar{0} \\ \bar{z}_1 \\ \bar{z}_2 \end{bmatrix},0\right) &\mapsto \left(B^w\begin{bmatrix} \bar{0} \\ \bar{z}_1 \\ \bar{z}_2 \end{bmatrix},0\right)=\left(\begin{bmatrix} \bar{0} \\ A^w\bar{z}_1 \\ A^{3w}\bar{z}_2 \end{bmatrix},0\right).
\end{split}
\]
Using the natural isomorphisms $\Lambda_1\cong \ZZ^8$ and $\Lambda_0\cong \ZZ^5$, we get: \begin{align*}
F_{01} &= \begin{bmatrix} -I_4 & 0 \\ 0 & \frac{1}{3} \end{bmatrix} \in \ZZ^{5\times 5} & F_{02} &= \begin{bmatrix} I_4 & 0 \\ 0 & \frac{1}{3} \end{bmatrix} \in \ZZ^{5\times 5} & F_{03} &= \begin{bmatrix} -I_4 & 0 \\ 0 & \frac{1}{3} \end{bmatrix} \in \ZZ^{5\times 5} \\
F_{11} &= \begin{bmatrix} 0 & 0 \\ I_4 & 0 \end{bmatrix} \in \ZZ^{8\times 8} & F_{12} &= \begin{bmatrix} 0 & 0 \\ A & 0 \end{bmatrix} \in \ZZ^{8\times 8} & F_{13} &= \begin{bmatrix} 0 & 0 \\ A^2 & 0 \end{bmatrix} \in \ZZ^{8\times 8} 
\end{align*}
\begin{align*}
A_0&:\Gamma\to \GL_5(\ZZ): (\bar{u},w) \mapsto \begin{bmatrix} (-1)^wI_4 & 0 \\ 0 & 1 \end{bmatrix} \\
A_1&:\Gamma\to \GL_8(\ZZ): (\bar{u},w) \mapsto \begin{bmatrix} A^w & 0 \\ 0 & A^{3w} \end{bmatrix}.
\end{align*}
Plugging everything into formula \eqref{eq:mainthm} (with representatives $\alpha=(\bar{0},0),(\bar{0},1)$ for $\Gamma/\Gamma_0$) gives 
\begin{align*}
N(f) &= \frac{1}{2} \big(\left|\det(I_5-A_0(\bar{0},0)F_{01})\det(I_8-A_1(\bar{0},0)F_{11})\right| \\ 
&\qquad + \left|\det(I_5-A_0(\bar{0},1)F_{01})\det(I_8-A_1(\bar{0},1)F_{11})\right| \\ 
&\qquad + \left|\det(I_5-A_0(\bar{0},0)F_{02})\det(I_8-A_1(\bar{0},0)F_{12})\right| \\ 
&\qquad + \left|\det(I_5-A_0(\bar{0},1)F_{02})\det(I_8-A_1(\bar{0},1)F_{12})\right| \\
&\qquad + \left|\det(I_5-A_0(\bar{0},0)F_{03})\det(I_8-A_1(\bar{0},0)F_{13})\right| \\ 
&\qquad + \left|\det(I_5-A_0(\bar{0},1)F_{03})\det(I_8-A_1(\bar{0},1)F_{13})\right| \big) \\
&= \frac{1}{2} \Bigg(\left|\det\left(I_5-\begin{bmatrix} -I_4 & 0 \\ 0 & \frac{1}{3} \end{bmatrix}\right)\det\left(I_8-\begin{bmatrix} 0 & 0 \\ I_4 & 0 \end{bmatrix}\right)\right| \\ 
&\qquad + \left|\det\left(I_5-\begin{bmatrix} I_4 & 0 \\ 0 & \frac{1}{3} \end{bmatrix}\right)\det\left(I_8-\begin{bmatrix} 0 & 0 \\ A^3 & 0 \end{bmatrix}\right)\right| \\
&\qquad + \left|\det\left(I_5-\begin{bmatrix} I_4 & 0 \\ 0 & \frac{1}{3} \end{bmatrix}\right)\det\left(I_8-\begin{bmatrix} 0 & 0 \\ A & 0 \end{bmatrix}\right)\right| \\ 
&\qquad + \left|\det\left(I_5-\begin{bmatrix} -I_4 & 0 \\ 0 & \frac{1}{3} \end{bmatrix}\right)\det\left(I_8-\begin{bmatrix} 0 & 0 \\ A^4 & 0 \end{bmatrix}\right)\right| \\
&\qquad + \left|\det\left(I_5-\begin{bmatrix} -I_4 & 0 \\ 0 & \frac{1}{3} \end{bmatrix}\right)\det\left(I_8-\begin{bmatrix} 0 & 0 \\ A^2 & 0 \end{bmatrix}\right)\right| \\ 
&\qquad + \left|\det\left(I_5-\begin{bmatrix} I_4 & 0 \\ 0 & \frac{1}{3} \end{bmatrix}\right)\det\left(I_8-\begin{bmatrix} 0 & 0 \\ A^5 & 0 \end{bmatrix}\right)\right| \Bigg) \\
&=\frac{1}{2}\cdot \frac{2}{3}\left(2^4+0+0+2^4+2^4+0 \right)=2^4=16.
\end{align*}


\end{document}